\newcommand{\w}{\omega}
\newcommand{\e}{\varepsilon}
\newcommand{\IN}{\mathbb N}
\newcommand{\A}{\mathcal A}
\newcommand{\I}{\mathcal I}
\newcommand{\IR}{\mathbb R}
\newcommand{\IQ}{\mathbb Q}
\newcommand{\F}{\mathcal F}
\newcommand{\Ra}{\Rightarrow}
\newcommand{\K}{\mathcal K}
\newcommand{\PP}{\mathcal P}
\newcommand{\cs}{\mathrm{cs}}
\newcommand{\C}{\mathcal C}
\newcommand{\cbox}{\boxdot}
\newcommand{\Pyt}{\mathfrak{P}}
\newtheorem{theorem}{Theorem}[section]
\newtheorem{proposition}[theorem]{Proposition}
\newtheorem{claim}[theorem]{Claim}
\newtheorem{subclaim}[theorem]{Subclaim}
\newtheorem{problem}[theorem]{Problem}
\newtheorem{example}[theorem]{Example}
\newtheorem{corollary}[theorem]{Corollary}
\theoremstyle{definition}
\newtheorem{definition}[theorem]{Definition}
\newtheorem{remark}[theorem]{Remark}
\title{$\mathfrak{P}_0$-spaces}
\author{Taras Banakh}
\address{Ivan Franko National University of Lviv (Ukraine) and
Jan Kochanowski University in Kielce (Poland)}
\email{t.o.banakh@gmail.com}
\keywords{$\Pyt_0$-space, $\aleph_0$-space, function space, compact-open topology, sequential space, topological group, topological lop, rectifiable space}
\subjclass{54C35, 54E20, 22A30}
\thanks{The work has been partially financed by NCN means granted by decision DEC-2011/01/B/ST1/01439. }
\begin{document}

\begin{abstract}
A regular topological space $X$ is defined to be a {\em $\Pyt_0$-space} if it has countable Pytkeev network. A network $\mathcal N$ for $X$ is called a {\em Pytkeev network} if for any point $x\in X$, neighborhood $O_x\subset X$ of $x$ and subset $A\subset X$ accumulating at a $x$ there is a set $N\in\mathcal N$ such that $N\subset O_x$ and $N\cap A$ is infinite.
The class of $\Pyt_0$-spaces contains all metrizable separable spaces and is (properly) contained in the Michael's class of $\aleph_0$-spaces.  It is closed under many topological operations: taking subspaces, countable Tychonoff products,  small countable box-products, countable direct limits, hyperspaces of compact subsets. For an $\aleph_0$-space $X$ and a $\Pyt_0$-space $Y$ the function space $C_k(X,Y)$ endowed with the compact-open topology is a $\Pyt_0$-space. For any sequential $\aleph_0$-space $X$  the free abelian topological group $A(X)$ and the free locally convex linear topological space $L(X)$ both are  $\Pyt_0$-spaces. A sequential space is a $\Pyt_0$-space if and only if it is an $\aleph_0$-space. A  topological space is metrizable and separable if and only if it is a $\Pyt_0$-space with countable fan tightness.
\end{abstract}
\maketitle

In this paper we introduce a new class of generalized metric spaces called $\Pyt_0$-spaces. Those are regular spaces possessing a countable Pytkeev network.
The class of $\Pyt_0$-spaces contains all metrizable separable spaces and is (properly) contained in the class of $\aleph_0$-spaces introduced by E.Michael \cite{Mi}.
In Section~\ref{s1} we shall introduce the notion of a Pytkeev network and study the relation of the class of $\Pyt_0$-spaces to some other classes of generalized metric spaces. Section~\ref{s2} contains our principal result (Theorem~\ref{main}) saying that for an $\aleph_0$-space $X$ and a $\Pyt_0$-space $Y$ the function space $C_k(X,Y)$ is a $\Pyt_0$-space. This extends the classical result of E.~Michael saying that the function space $C_k(X,Y)$ between $\aleph_0$-spaces in an $\aleph_0$-space.  In Section~\ref{s3} Theorem~\ref{main} is used to show that the class of $\Pyt_0$-spaces is closed under countable Tychonoff products and small box-products. Also we show that for any $\Pyt_0$-space $X$ its hyperspace $\exp(X)$ of compact subsets is a $\Pyt_0$-space. In Section~\ref{s4} we detect $\Pyt_0$-spaces among topological groups and (para)topological lops. In the final section~\ref{s5} we detect $\Pyt_0$-spaces among rectifiable spaces and recover the topological structure of sequential rectifiable $\Pyt_0$-spaces.

%All topological spaces considered in this paper are $T_1$-spaces.

\section{$\Pyt_0$-spaces and their relation to other generalized metric spaces}\label{s1}

We start with recalling various notions related to networks.

\begin{definition} Let $X$ be a topological space and $x\in X$ be a point.
A family $\mathcal N$ of subsets of $X$ is called
\begin{itemize}
\item a {\em network at} $x$ if for any neighborhood $O_x\subset X$ of $x$ there is a set $N\subset \mathcal N$ such that $x\in N\subset O_x$;
\item a {\em network} if $\mathcal N$ is a network at each point of $X$;
\smallskip

\item a {\em $k$-network} if for each open set $U\subset X$ and compact subset $K\subset U$ there is a finite subfamily $\F\subset\mathcal N$ such that $K\subset\bigcup\F\subset U$;
\smallskip

\item a {\em $\cs^*$-network at} $x$ if for each sequence $(x_n)_{n\in\w}$ in $X$ convergent to  $x\in X$ and each neighborhood $O_x\subset X$ of $x$ there is a set $N\in\mathcal N$ such that $N\subset O_x$ and the set $\{n\in\w:x_n\in N\}$ is infinite;
\item a {\em $\cs^*$-network} if $\mathcal N$ is a $\cs^*$-network at each point $x\in X$;
\smallskip

\item a {\em Pytkeev $\pi$-network at} $x$ if $\mathcal N$ is a network at $x$ and each neighborhood $O_x\subset X$ contains an infinite set $N\in\mathcal N$;
\smallskip

\item a ({\em strict}) {\em Pytkeev network at} $x$ if $\mathcal N$ is a network at $x$ and for each neighborhood $O_x\subset X$ of $x$ and subset $A\subset X$ accumulating at $x$ there is a set $N\in\mathcal N$ such that $ N\subset O_x$, $N\cap A$ is infinite (and $x\in N$);
\item a ({\em strict}) {\em Pytkeev network} if  $\mathcal N$ is a (strict) Pytkeev network at each point $x\in X$.
\end{itemize}
\end{definition}

We say that a subset $A$ of a topological space $X$ {\em accumulates} at a point $x\in X$ if each neighborhood $O_x\subset X$ of $x$ contains infinitely many points of the set $A$. Each accumulation point belongs to the closure $\bar A$ of the set $A$ in $X$. In a $T_1$-space $X$ a point $x\in X$ is an accumulation point of a set $A\subset X$ if and only if $x$ belongs to the closure of $A\setminus\{x\}$.

Spaces possessing countable networks of various sorts have special names.

\begin{definition} A topological space
$X$ is defined
\begin{itemize}
\item to be a {\em cosmic} space if $X$ is a regular space with a countable network;
\item to be an {\em $\aleph_0$-space} if $X$ is a regular space with a countable $k$-network;
\item to be of {\em countable $\cs^*$-character} if $X$ has a countable  $\cs^*$-network at each point $x$;
\item to have {\em the Pytkeev property} if each subspace $A\subset X$ has a countable Pytkeev $\pi$-network at each accumulation point $x\in X$ of $A$;
\item to have {\em the strong Pytkeev property} if $X$ has a countable Pytkeev network  at each point $x\in X$;
\item to be a ({\em strict}) {\em $\Pyt_0$-space} if $X$ is a regular space with a countable (strict) Pytkeev network.
\end{itemize}
\end{definition}

These notions relate as follows (see Proposition~\ref{p1.2}):
$$
\xymatrix{
\mbox{strict $\Pyt_0$-space}\ar@{=>}[d] \ar@{<=>}[r]&\mbox{$\Pyt_0$-space}\ar@{=>}[r]\ar@{=>}[d]&\mbox{$\aleph_0$-space}\ar@{=>}[d]\ar@{=>}[r]&\mbox{cosmic}\\
\mbox{Pytkeev property}&\mbox{stong Pytkeev property}\ar@{=>}[l]\ar@{=>}[r]&\mbox{countable $\cs^*$-character}.
}
$$
The equivalence (strict $\Pyt_0$-space $\Leftrightarrow$ $\Pyt_0$-space) follows from the fact that for any Pytkeev network $\mathcal N$ in a space $X$ the family $\mathcal N\vee \mathcal N=\{A\cup B:A,B\in\mathcal N\}$ is a strict Pytkeev network. %The difference between Pytkeev networks and their strict versions appear at the $\sigma$-locally finite level, see \cite{GK}.

\medskip

\begin{remark}
The notion of a network is well-known in General Topology (see \cite[\S3.1]{Eng}) and cosmic spaces form an important class of generalized metric spaces, see \cite[\S10]{Gru2}. $k$-Networks and $\aleph_0$-spaces were introduced by E.Michael \cite{Mi} and studied in \cite[\S11]{Gru}, \cite{Tan}, \cite{BBK}. Spaces with countable $\cs^*$-networks were studied in \cite{Gao}, \cite{LT} and with countable $\cs^*$-character in \cite{BZd}, \cite{MSak}. The Pytkeev property was introduced in \cite{Pyt} and studied in \cite{BM}, \cite{FlD}, \cite{Koc}, \cite{MT}, \cite{MSak2}, \cite{PP}, \cite{ST}.
The strong Pytkeev property was introduced by Tsaban and Zdomskyy in \cite{TZ} and studied in \cite{GKL}, \cite{BL}, \cite{GK1}, \cite{GK2}. The notions of a (strict) Pytkeev network and a $\Pyt_0$-space seem to be new. These notions are central objects of study in this paper.
\end{remark}

First we establish some relations between $\cs^*$-networks and $k$-networks.
It is clear that each $k$-network is a $\cs^*$-network. The converse is true for compact-countable networks.
 
A family $\mathcal N$ of subsets of a topological space $X$ is called {\em compact-countable} if for each compact subset $K\subset X$ the family $\{N\in\mathcal N:N\cap K\ne\emptyset\}$ is at most countable.

\begin{proposition}\label{p:new} Each compact-countable $\cs^*$-network in a topological space is a $k$-network.\footnote{In the published version of this paper Proposition~\ref{p:new} was proved only for Hausdorff spaces.}
\end{proposition}

\begin{proof} Let $\mathcal N$ be a compact-countable $\cs^*$-network in a topological space $X$. To show that $\mathcal N$ is a $k$-network, fix an open set $U\subset X$ and a compact subset $K\subset U$. Consider the countable subfamily $\mathcal N'=\{N\in\mathcal N:K\cap N\ne\emptyset,\;N\subset U\}$ and fix its enumeration $\mathcal N'=\{N_k:k\in\w\}$. Observe that the family $\{N_k\cap K\}_{k\in\w}$ is a countable network for the compact space $K$, which implies that $K$ is hereditarily Lindel\"of. By
Corollary 2.2 \cite{AW}, the hereditarily Lindel\"of compact space $K$ is sequentially compact.

We claim that for some $k\in\w$ the compact set $K$ is contained in the finite union $\bigcup_{i\le k}N_i$. Assuming the converse, for every $k\in\w$ we could find a point $x_k\in K\setminus\bigcup_{i\le k}N_i$. By the sequential compactness of $K$, there is increasing number sequence $(k_i)_{i\in\w}$ such that the sequence $(x_{k_i})_{i\in\w}$ converges to some point $x_\infty\in K$. Since $\mathcal N$ is a $\cs^*$-network, there is a set $N\in \mathcal N$ such that $x_\infty\in N\subset U$ and $\{i\in\w:x_{k_i}\in N\}$ is infinite. Taking into account that $N\subset U$, we conclude that $N\in\mathcal N'$ and hence $N=N_m$ for some $m\in\w$. The choice of the sequence $(x_k)$ guarantees that $x_k\notin N=N_m$ for all $k\ge m$, so the set $\{i\in\w:x_{k_i}\in N=N_m\}\subset\{i\in\w:k_i<m\}$ is finite and this is a desired contradiction showing that $K\subset \bigcup_{i\le k}N_i\subset U$ for some $k$, and witnessing that the $\cs^*$-network $\mathcal N$ is a $k$-network for $X$.
\end{proof}

This proposition implies the following characterization of $\aleph_0$-spaces (which improves a bit a characterization \cite{Guth} of $\aleph_0$-spaces as regular spaces with countable $\cs$-network).

\begin{corollary} A regular space $X$ is an $\aleph_0$-space if and only if $X$ has countable $\cs^*$-network.
\end{corollary}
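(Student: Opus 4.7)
The plan is that this corollary follows essentially by combining the preceding proposition with a trivial observation already noted in the text, so the proof reduces to carefully checking each implication.

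For the forward direction, suppose $X$ is an $\aleph_0$-space. Then by definition $X$ is regular and has a countable $k$-network $\mathcal N$. As noted in the paragraph before the proposition (``It is clear that each $k$-network is a $\cs^*$-network''), this $\mathcal N$ is automatically a countable $\cs^*$-network, which is all that is needed.

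For the backward direction, suppose $X$ is a regular space with countable $\cs^*$-network $\mathcal N$. Since in this paper regularity includes the $T_1$ axiom, $X$ is in particular Hausdorff. The preceding proposition then applies directly to give that $\mathcal N$ is a $k$-network for $X$. Hence $X$ is a regular space with countable $k$-network, i.e.\ an $\aleph_0$-space.

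There is essentially no obstacle here: the substantive content (a countable $\cs^*$-network in a Hausdorff space is a $k$-network) has been carried out in the proposition, and the corollary is only a matter of packaging both implications and observing that ``regular'' supplies the Hausdorff hypothesis needed to invoke the proposition. The only thing to be a bit careful about is the convention on separation axioms, namely that ``regular'' here is understood to include $T_1$, so that the proposition is applicable.
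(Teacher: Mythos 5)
Your proposal is correct and is exactly the argument the paper intends: the forward direction is the remark that every $k$-network is a $\cs^*$-network, and the backward direction is an immediate application of the preceding proposition, with regularity (understood to include $T_1$) supplying the Hausdorff hypothesis. The paper leaves this implicit ("This proposition implies\dots"), so your write-up simply makes the same reasoning explicit.
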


\begin{proposition}\label{p1.2} Each countable Pytkeev network is a $k$-network. Consequently, each $\Pyt_0$-space is an $\aleph_0$-space.
\end{proposition}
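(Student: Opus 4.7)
The plan is to adapt the strategy of Proposition~1.4, but to replace its detour through metrizability of compact cosmic sets by a direct appeal to the Pytkeev property, which was designed precisely to handle accumulating sets without the need to extract convergent subsequences. So I would fix a countable Pytkeev network $\mathcal N$, an open set $U\subset X$ and a compact subset $K\subset U$, and enumerate the subfamily $\mathcal N(U)=\{N\in\mathcal N:N\subset U\}$ as $\{N_k:k\in\w\}$. Assuming for contradiction that no initial finite union $\bigcup_{i\le k}N_i$ covers $K$, I would select a point $x_k\in K\setminus\bigcup_{i\le k}N_i$ for each $k\in\w$ and then split on whether the set $A=\{x_k:k\in\w\}$ is finite or infinite.

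If $A$ is finite, some $y\in K\subset U$ equals $x_k$ for infinitely many indices $k$; since $\mathcal N$ is a network at $y$, there is $N\in\mathcal N$ with $y\in N\subset U$, so $N=N_m$ for some $m\in\w$, and choosing any $k\ge m$ with $x_k=y$ produces the contradiction $x_k\in N_m\subset\bigcup_{i\le k}N_i$. If $A$ is infinite, compactness of $K$ supplies a point $x_\infty\in K$ every neighborhood of which meets $A$ in an infinite set; indeed, otherwise each $x\in K$ would admit a neighborhood meeting $A$ in a finite set, and finitely many such neighborhoods would cover $K$ and force $A$ to be finite. Since $x_\infty\in U$ and $A$ accumulates at $x_\infty$, the Pytkeev network property at $x_\infty$ yields $N\in\mathcal N$ with $N\subset U$ and $N\cap A$ infinite; writing $N=N_m$ and picking $k\ge m$ with $x_k\in N_m$ gives the same contradiction as before.

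This shows that $\mathcal N$ is a $k$-network, and the ``consequently'' clause is then immediate: a $\Pyt_0$-space carries by definition a countable Pytkeev network, which by what has just been proved is a countable $k$-network, so the space is an $\aleph_0$-space. The only nonroutine step is the extraction of $x_\infty$ in the infinite case, but that is a standard compactness argument; everything else is a matter of unpacking the definitions of a Pytkeev network and of accumulation.
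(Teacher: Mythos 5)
Your proof is correct and follows essentially the same route as the paper's: enumerate $\mathcal N(U)$, pick $x_k\in K\setminus\bigcup_{i\le k}N_i$, find an accumulation point of $\{x_k\}_{k\in\w}$ in $K$ by compactness, and apply the Pytkeev property to get a contradiction with $|N_m\cap A|$ being finite. The only difference is that you explicitly treat the case where the $x_k$ repeat and $A$ is finite (falling back on the network property), a detail the paper's proof silently glosses over.
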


\begin{proof} Assume that $\mathcal N$ is a countable Pytkeev network in a topological space $X$.
To show that $\mathcal N$ is a $k$-network, take any open set $U\subset X$ and a compact subset $K\subset U$. Consider the countable subfamily  $\mathcal N(U)=\{N\in\mathcal N:N\subset U\}$ and let $\mathcal N(U)=\{N_k:k\in\w\}$ be its enumeration. We claim that $K\subset N_0\cup\dots\cup N_k$ for some $k\in\w$. In the opposite case, for every $k\in\IN$ we could choose a point $x_k\in K\setminus\bigcup_{i=0}^k N_i$ and observe that the infinite subset $A=\{x_k\}_{k\in\w}$ of the compact space $K$ accumulates at some $x_\infty\in K$. Consequently, $A\cap N_k$ is infinite for some set $N_k\in\mathcal N(U)$, which is not possible as $|N_k\cap A|\le k$. So, $\mathcal N$ is a $k$-network in $X$.
\end{proof}

For sequential spaces or $k_2$-spaces Proposition~\ref{p1.2} can be reversed. Let us recall that a topological space is
\begin{itemize}
\item {\em sequential} if for each non-closed subset $A\subset X$ there is a sequence $\{a_n\}_{n\in\w}\subset A$, convergent to some point $x\in X\setminus A$;
\item a {\em $k$-space} if for each non-closed subset $A\subset X$ there is a compact subset $K\subset X$ such that $K\cap A$ is not closed in $K$;
\item a {\em $k_2$-space} if for each non-closed subset $A\subset X$ there is a compact Hausdorff subspace $K\subset X$ such that $K\cap A$ is not closed in $K$.
\end{itemize}
It is clear that a space is a $k$-space if it is sequential or a $k_2$-space. A $k$-space $X$ is sequential if all compact subsets of $X$ are metrizable. In particular, a cosmic space is sequential if and only if it is a $k$-space.

\begin{proposition}\label{p1.3n} Each $k$-network $\mathcal N$ in a $k_2$-space $X$ is a Pytkeev network.
\end{proposition}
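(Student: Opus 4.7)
Fix $x\in X$, a neighborhood $O_x$ of $x$, and a subset $A\subset X$ accumulating at $x$. The network-at-$x$ clause comes essentially for free: applying the $k$-network hypothesis to the compact singleton $\{x\}\subset O_x$ produces a finite $\mathcal F\subset\mathcal N$ with $\{x\}\subset\bigcup\mathcal F\subset O_x$, and the member of $\mathcal F$ containing $x$ is the desired network element. For the main Pytkeev clause --- finding $N\in\mathcal N$ with $N\subset O_x$ and $N\cap A$ infinite --- my strategy is first to produce a compact set $L\subset O_x$ meeting $A$ in an infinite set, and then to apply the $k$-network property to the pair $(L,O_x)$: a finite $\mathcal F\subset\mathcal N$ with $L\subset\bigcup\mathcal F\subset O_x$ covers the infinite set $L\cap A$ by finitely many members of $\mathcal N$, so by pigeonhole some $N\in\mathcal F$ has $N\cap A\supset N\cap L\cap A$ infinite, as required.

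To produce $L$, I would first replace $A$ by $A':=(A\cap V)\setminus\{x\}$, where $V$ is an open neighborhood of $x$ with $\overline{V}\subset O_x$ (available from the standing regularity of $X$ in the paper's framework). Since $A$ accumulates at $x$ and $V$ is a neighborhood of $x$, the set $A'$ still accumulates at $x$, and $x\in\overline{A'}\setminus A'$, so $A'$ is not closed in $X$. Invoking the $k_2$-hypothesis on $A'$ produces a compact Hausdorff subspace $K\subset X$ with $K\cap A'$ not closed in $K$; pick $y\in\overline{K\cap A'}^{K}\setminus(K\cap A')$. Because $K$ is $T_1$, every $K$-neighborhood of $y$ meets $K\cap A'$ in an infinite set, and the preliminary shrinking $V\subset\overline{V}\subset O_x$ guarantees $y\in\overline{K\cap A'}^{K}\subset\overline{V}\subset O_x$. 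Now the regularity of the compact Hausdorff space $K$ lets me choose an open (in $K$) set $W$ with $y\in W\subset\overline{W}^{K}\subset O_x\cap K$; setting $L:=\overline{W}^{K}$ gives a compact set $L\subset O_x$ with $L\cap A\supset W\cap K\cap A'$ infinite.

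The main obstacle to streamline is the guarantee that the $k_2$-accumulation point $y$ lands inside the open set $O_x$ and not merely in its closure: without that, no compact $K$-neighborhood of $y$ would fit inside $O_x$, and the $k$-network property could not be applied. The regularity of $X$ handles this cleanly via the $V\subset\overline{V}\subset O_x$ trick, after which the internal regularity of the compact Hausdorff $K$ supplies the final shrinking; were regularity genuinely dropped, a finer argument would be required to arrange some accumulation of $K\cap A'$ inside $O_x$ directly.
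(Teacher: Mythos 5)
Your proposal follows the same skeleton as the paper's proof: modify $A$ so that it becomes non-closed, invoke the $k_2$-hypothesis to obtain a compact Hausdorff subspace $K$ in which the modified set is non-closed, pick an accumulation point, use the internal regularity of $K$ to shrink to a compact neighborhood contained in $O_x$, and finish with the $k$-network property plus pigeonhole (your treatment of the network-at-$x$ clause via the singleton $\{x\}$ is also the standard one and is fine). The single substantive divergence is the mechanism that forces the accumulation point supplied by the $k_2$-property to land inside $O_x$. You achieve this by pre-shrinking with an open $V$ satisfying $\overline{V}\subset O_x$, which costs you the regularity of $X$ --- an assumption the proposition does not make: it is stated for an arbitrary $k_2$-space, and only $\Pyt_0$-spaces (not $k_2$-spaces) are regular by definition in this paper, so as written your argument proves a weaker statement. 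The paper's device is to apply the $k_2$-property not to a trimmed version of $A$ but to $B=A\cup(X\setminus O_x)$, after arranging $x\notin B$: the point $z\in K\setminus B$ lying in the closure of $K\cap B$ then automatically satisfies $z\in O_x$ because $X\setminus O_x\subset B$, and for any $N\subset O_x$ one has $N\cap B=N\cap A$, so infinitude of $N\cap B$ gives infinitude of $N\cap A$. This trick costs nothing, needs no separation axiom on $X$ itself (only the Hausdorffness of $K$, which is built into the $k_2$-definition), and is exactly the ``finer argument'' you anticipate at the end; I would adopt it. A minor further difference: to cover non-$T_1$ spaces the paper first disposes of the case where $A$ meets the compact set $\ddot x=\bigcap\{\,O: O\mbox{ a neighborhood of }x\}$ in an infinite set, by covering $\ddot x$ with finitely many members of $\mathcal N$ inside $O_x$; under your regularity assumption this case is invisible, but without it the reduction to $A\cap\ddot x=\emptyset$ is what guarantees $x\notin B$.
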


\begin{proof} Let $\mathcal N$ be a $k$-network for a $k_2$-space $X$. To show that $\mathcal N$ is a Pytkeev network, fix a subset $A\subset X$ accumulating at some point $x\in X$. Given any neighborhood $O_x\subset X$, we need to find a set $N\in\mathcal N$ such that $N\subset O_x$ and $N\cap A$ is infinite.

Let $\ddot x$ be the intersection of all neighborhoods of the point $x$. It is clear that $\ddot x$ is a compact subset of $U_x$, so $\ddot x\subset\bigcup\F\subset O_x$ for some finite subfamily $\F\subset \mathcal N$. If the intersection $\ddot x\cap A$ is infinite, then for some $F\in\F\subset\mathcal N$ the intersection $A\cap F$ is infinite and we are done.

So, we assume that $\ddot x\cap A$ is finite. Since $x$ is an accumulating point of $A$, it is also accumulating point of $A\setminus \ddot x$. Replacing the set $A$ by $A\setminus \ddot x$ we can assume that
$A\cap \ddot x=\emptyset$.
It follows that the set $B=A\cup (X\setminus O_x)$ accumulates at $x$ and hence it is not closed in $X$.

Since $X$ is a $k_2$-space, there exists a compact Hausdorff subspace $K\subset X$ such that $K\cap B$ is not closed in $K$. Consequently, there exists a point $z\in K\setminus B\subset O_x$ which belongs to the closure of the set $K\cap B$. The compact space $K$, being Hausdorff, is regular. So, we can choose a compact neighborhood $K_z\subset K\cap O_x$ of the point $z$ in $K$. Observe that the set $K_z\setminus B$ is infinite (since $z\notin K_z\setminus B$ is contained in the closure of $K_z\setminus B$ and $K_z$ is Hausdorff). Since $\mathcal N$ is a $k$-network, for the compact set $K_z\subset O_x$ there is a finite subfamily $\F\subset\mathcal N$ such that $K_z\setminus B\subset K_z\subset\bigcup\F\subset O_x$. By Pigeonhole Principle, for some set $F\in\F\subset\mathcal N$ the set $F\setminus B=F\cap A$ is infinite.
\end{proof}

For sequential spaces we can prove a bit more (but in more difficult way).

\begin{proposition}\label{p1.3s} Each $\cs^*$-network $\mathcal N$ in a sequential space $X$ is a Pytkeev network.
\end{proposition}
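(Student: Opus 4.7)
My plan is to reduce the statement to a single invocation of the $\cs^*$-network property, applied to a sequence in $A$ that the sequentiality of $X$ lets me extract with a suitably placed limit.

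Fix $x\in X$, an open neighborhood $O_x$ of $x$, and a set $A\subset X$ accumulating at $x$; I need to produce $N\in\mathcal N$ with $N\subset O_x$ and $|N\cap A|=\infty$. Before anything else I would note that $\mathcal N$ is automatically a network at each point: applying the $\cs^*$-network condition to the constant sequence $(x)_{n\in\omega}$ and an arbitrary neighborhood $O_x$ produces $N\in\mathcal N$ with $x\in N\subset O_x$. So only the infinite-intersection clause of the Pytkeev definition requires genuine work.

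The key reduction is to work inside the open subspace $Y:=O_x$. An open subspace of a sequential space is again sequential, as one sees by the standard trick: if $B\subset Y$ is sequentially closed in $Y$, then $B\cup(X\setminus Y)$ is sequentially closed in $X$, because any sequence from this set converging to a point of the open set $Y$ is eventually in $Y$, hence in $B$; thus $B\cup(X\setminus Y)$ is closed in $X$, and $B$ is closed in $Y$. Now set $A'':=(A\cap O_x)\setminus\{x\}\subset Y$; since $A$ accumulates at $x$, every $Y$-neighborhood of $x$ meets $A''$ in an infinite set, so $x\in\overline{A''}^{Y}\setminus A''$ and $A''$ is not closed in $Y$. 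Sequentiality of $Y$ then supplies a sequence $(a_n)_{n\in\omega}$ in $A''$ that converges in $Y$ to some $z\in Y\setminus A''\subset O_x$, and after passing to a subsequence I may assume the points $a_n$ are pairwise distinct.

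This sequence converges to $z$ in $X$ as well, and $O_x$ is an $X$-neighborhood of $z$, so the $\cs^*$-network property yields $N\in\mathcal N$ with $N\subset O_x$ and $\{n\in\omega:a_n\in N\}$ infinite; since the $a_n$ are pairwise distinct elements of $A$, this forces $|N\cap A|=\infty$, giving the Pytkeev condition at $x$. The delicate point---and the reason the result is harder than its $k_2$-analogue---is the subspace reduction: one must verify that openness is enough to transfer sequentiality from $X$ to $Y$, and then arrange that the sequence produced by sequentiality has its limit inside $O_x$, which is precisely what allows the single application of the $\cs^*$-network clause with exactly the neighborhood we started from.
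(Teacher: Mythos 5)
Your reduction to the open subspace $Y=O_x$ is sound (open subspaces of sequential spaces are sequential, and it neatly forces the limit of the extracted sequence into $O_x$, playing the same role as the paper's trick of passing to $B=A\cup(X\setminus O_x)$), and the final application of the $\cs^*$-property is correct \emph{provided} the sequence is injective. The genuine gap is the sentence ``after passing to a subsequence I may assume the points $a_n$ are pairwise distinct.'' The proposition is stated for an arbitrary sequential space, with no separation axiom, and in that generality a convergent sequence need not admit an injective subsequence: if $(a_n)$ takes only finitely many values, some value $a$ occurs infinitely often, and the constant sequence at $a$ converges to $z$ merely because $a$ lies in every neighborhood of $z$ --- which is perfectly possible when $X$ is not $T_1$, even with $a\ne z$. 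In that situation the $\cs^*$-condition returns a set $N\subset O_x$ containing the single point $a$ of $A$ infinitely often as a \emph{term}, but $N\cap A$ may be finite, so the Pytkeev condition is not obtained. You have also misplaced the difficulty: the subspace reduction is the easy part, and the injectivity is where all the work lies.

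This is precisely the obstacle the paper's proof is built around: it first disposes of the case where $\ddot x\cap A$ (with $\ddot x$ the intersection of all neighborhoods of $x$) is infinite, then arranges $A\cap\ddot x=\emptyset$ so that a sequence from $B$ converging to $x$ itself cannot have a constant subsequence, and finally runs a transfinite induction on iterated sequential closures (Claim~\ref{cl1.8}) whose sole purpose is to manufacture an \emph{injective} sequence from $B$ converging into $O_x$; the case $\alpha=2$ of that induction is exactly the analysis of points reached only by constant sequences. Your argument is complete and much shorter under the additional hypothesis that $X$ is $T_1$ (then a convergent sequence avoiding its limit cannot have finitely many values), which covers the paper's applications to regular spaces, but as a proof of the stated proposition it is missing the main idea.
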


\begin{proof} To show that  $\mathcal N$ is a Pytkeev network in $X$, take any set $A\subset X$ accumulating at a point $x\in X$. Given an open neighborhood $O_x\subset X$ of $x$, we need to find a set $N\in\mathcal N$ such that $N\subset O_x$ and $N\cap A$ is infinite. Let $\ddot x$ be the intersection of all neighborhoods of $x$ in $X$. If $\ddot x\cap A$ is infinite, then we can choose a sequence $(x_n)_{n\in\w}$ of pairwise distinct points of the set $\ddot x\cap A$ and observe that this sequence converges to $x$. The family $\mathcal N$, being a $\cs^*$-network at $x$, contain a set $N\in\mathcal N$ such that $N\subset O_x$ and the set $\{n\in\w:x_n\in N\}$ is infinite. Then the intersection $N\cap A$ is infinite as well.

So, we assume that the intersection $\ddot x\cap A$ is finite. In this case the set $A\setminus \ddot x$ also accumulates at $x$. Replacing $A$ by $A\setminus \ddot x$, we can assume that $A\cap \ddot x=\emptyset$. Then the set $B=A\cup(X\setminus O_x)$ contains $x$ in its closure too and does not intersect $\ddot x$ (as $\ddot x\subset O_x$). For any subset $C\subset X$ denote by $C^{(1)}$ the sequential closure of $C$, i.e., the set of limit points of sequences $\{c_n\}_{n\in\IN}\subset C$ that converge in $X$. Let $B^{(0)}=B$ and by transfinite induction, for every ordinal $\alpha>1$ put $B^{(\alpha)}=(B^{(<\alpha)})^{(1)}$ where $B^{(<\alpha)}=\bigcup_{\beta<\alpha}B^{(\beta)}$. The sequentiality of the space $X$ guarantees that $x\in B^{(\alpha_x)}$ for some ordinal $\alpha_x>0$.

 If $x\in B^{(1)}$, then we can choose a sequence $\{b_n\}_{n\in\w}\subset B$, convergent to $x$.
Since $B\cap \ddot x=\emptyset$, this sequence cannot contain a constant subsequence and hence it contains a subsequence of pairwise distinct points of $B$. Replacing the sequence $(b_n)_{n\in\w}$ by this subsequence, we can assume that all points $b_n$, $n\in\w$, are pairwise distinct and belong to the neighborhood $O_x$. Since $\mathcal N$ is a  $\cs^*$-network $\mathcal N$, there is a set $N\in\mathcal N$ such that $N\subset O_x$ and $N$ contains infinitely many points of the convergent sequence $\{b_n\}_{n\in\w}\subset B\cap U_x\subset A$ and hence infinitely many points of the set $A$. Now assume that $x\notin B^{(1)}$.

A sequence of points $(b_n)_{n\in\w}$ will be called {\em injective} if $b_n\ne b_m$ for any distinct numbers $n,m\in\w$.

\begin{claim}\label{cl1.8} For every ordinal $\alpha\ge 1$, any open neighborhood $O_z\subset X$ of any point $z\in B^{(\alpha)}\setminus B^{(1)}$ contains an injective sequence $\{b_n\}_{n\in\w}\subset B$ convergent to a point of $O_z$.
\end{claim}

\begin{proof} This claim will be proved by induction on $\alpha\ge 1$.
For $\alpha=1$ this statement trivially holds. Assume that for some ordinal $\alpha>1$ we have proved the claim for all ordinals $<\alpha$. Choose any point $z\in B^{(\alpha)}\setminus B^{(1)}$ and an open neighborhood $O_z\subset X$ of $z$. If $z\in B^{(\beta)}$ for some $\beta<\alpha$, then we can apply the inductive assumption and complete the proof.  So, assume that $z\notin B^{(<\alpha)}$. In this case $z$ is the limit of some convergent sequence $\{z_n\}_{n\in\w}\subset B^{(<\alpha)}\cap O_z$. If $\alpha>2$, then $z\notin B^{(<\alpha)}$ implies the existence of $n\in\w$ such that $z_n\notin B^{(1)}$. Then by the inductive assumption, the neighborhood $O_z$ of $z_n$ contains an injective sequence $\{b_n\}_{n\in\w}\subset B$ converging to some point of $O_z$ and we are done.

It remains to consider the case $\alpha=2$. Assume that the neighborhood $O_z$ contains no injective sequence $\{b_n\}_{n\in\w}\subset B$ convergent to a point of $O_z$. Since $z\in B^{(2)}\setminus B^{(1)}$, in the sequence $\{z_n\}_{n\in\w}\subset B^{(1)}$ there is only finitely many points of the set $B$. Passing to a subsequence, we can assume that $z_n\notin B$ for all $n\in\w$. Since $O_z$ contains no injective convergent sequence of points of $B$, for every $n\in\w$ the point $z_n\in O_z\cap B$ is the limit of some constant sequence in $B$, which implies that the intersection $\ddot{z}_n$ of all neighborhoods of $z_n$ meets the set $B$ and hence contains a point $b_n\in B\cap \ddot{z}_n$. It is easy to see that the convergence of the sequence $(z_n)_{n\in\w}$ to $z$ implies the convergence of $(b_n)_{n\in\w}$ to the same limit $z$. This means that $z\in B^{(1)}$, which contradicts the choice of $z$.
This contradiction completes the proof of Claim~\ref{cl1.8}.
\end{proof}

By Claim~\ref{cl1.8}, for the ordinal $\alpha_x$ the neighborhood $O_x$ of the point $x\in B^{(\alpha_x)}\setminus B^{(1)}$ contains an injective sequence $\{b_n\}_{n\in\w}\subset B$ convergent to a point $b_\infty\in U_x$. By definition, the $\cs^*$-network $\mathcal N$ contains a set $N\subset O_x$  containing infinitely many points of the sequence $\{b_n\}_{n\in\w}\subset B\cap U_x\subset A$, which implies that $A\cap N$ is infinite.
\end{proof}

Propositions~\ref{p1.2}, \ref{p1.3n} and \ref{p1.3s} imply:

\begin{corollary}\label{c1.7} A $k$-space $X$ is a $\Pyt_0$-space if and only if $X$ is an $\aleph_0$-space if and only if $X$ is a regular space with a countable $\cs^*$-network.
\end{corollary}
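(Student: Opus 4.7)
The strategy is to organize the three conditions as a cycle of implications
$$
\text{$\Pyt_0$-space}\;\Longrightarrow\;\text{$\aleph_0$-space}\;\Longrightarrow\;\text{regular with countable $\cs^*$-network}\;\Longrightarrow\;\text{$\Pyt_0$-space},
$$
where the first two implications are valid in any topological space (so the $k$-space hypothesis is used only for the final step).

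The first implication is immediate from Proposition~\ref{p1.2}: any countable Pytkeev network is a countable $k$-network, so every $\Pyt_0$-space is an $\aleph_0$-space. The second implication is even more direct, since each $k$-network is trivially a $\cs^*$-network (a $\cs^*$-network asks only for one member to capture infinitely many terms of a convergent sequence inside a given open set, which follows from covering the compact set $\{x_n\}\cup\{x\}$ by finitely many members of the $k$-network contained in that open set).

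The substantive step is the third implication. Suppose $X$ is a $k$-space that is regular and admits a countable $\cs^*$-network $\mathcal N$. By the corollary stated just before Proposition~\ref{p1.2}, such an $X$ is already an $\aleph_0$-space, hence cosmic. A cosmic space has a countable network, which forces every compact subset to be hereditarily Lindel\"of and Hausdorff of countable network weight, hence metrizable. As remarked before Proposition~\ref{p1.3n}, a $k$-space whose compact subsets are all metrizable is sequential. Thus $X$ is a sequential space carrying a countable $\cs^*$-network $\mathcal N$. By Proposition~\ref{p1.3s}, $\mathcal N$ is in fact a countable Pytkeev network, so $X$ is a $\Pyt_0$-space, closing the loop.

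The main point to watch is the sequentiality step: one must observe that the countable $\cs^*$-network implies cosmicity (via the corollary after the first proposition of the section), because only then can one conclude that all compact subsets of the $k$-space $X$ are metrizable and hence that $X$ is sequential. Once sequentiality is in hand, the upgrade of a $\cs^*$-network to a Pytkeev network provided by Proposition~\ref{p1.3s} finishes the proof without further work.
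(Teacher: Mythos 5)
Your proposal is correct and follows essentially the same route the paper intends: the paper proves this corollary by simply citing Propositions~\ref{p1.2}, \ref{p1.3n} and \ref{p1.3s}, and your argument fills in exactly the glue the paper leaves implicit (cosmicity forces compact subsets to be metrizable, so the $k$-space is sequential, and then Proposition~\ref{p1.3s} upgrades the $\cs^*$-network to a Pytkeev network). The only cosmetic difference is that the paper also offers Proposition~\ref{p1.3n} ($k_2$-spaces) as an alternative for the last step, whereas you use only the sequential route.
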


The following (probably known) example shows that the class of $\Pyt_0$-spaces is properly contained in the class of $\aleph_0$-spaces.

\begin{example} For any free ultrafilter $p$ on $\IN$ the space $X=\IN\cup\{p\}\subset\beta\IN$
\begin{itemize}
\item has a countable $k$-network and hence is an $\aleph_0$-space;
\item fails to have the Pytkeev property at the point $p\in X$ and hence fails to be a $\Pyt_0$-space.
\end{itemize}
\end{example}

\begin{proof} Since each compact subset of the space $X=\IN\cup\{p\}$ is finite, the family of singletons is a countable $k$-network for $X$.

Next, we show that $X$ does not have the Pytkeev property at the point $p$. It suffices to show that any countable family $\mathcal P=\{P_n\}_{n\in\w}$ of infinite subsets of $X$ fails to be a Pytkeev $\pi$-network at $p$.
For every $n\in\w$ by induction we can choose two distinct points $a_n,b_n\in P_n\setminus\{p, a_k,b_k:k<n\}$. Enlarge the sets $\{a_n\}_{n\in\w}$ and $\{b_n\}_{n\in\w}$ to two disjoint sets $A,B$ such that $A\cup B=\w$. One of the sets $A$ or $B$ belongs to the ultrafilter $p$. If $A\in p$, then for the neighborhood $O_p=\{p\}\cup A$ there is no set $P\in\mathcal P$ with $p\in P\subset O_p$. If $B\in p$, then for the neighborhood $O_p=\{p\}\cup B$ there is no set $P\in\mathcal P$ with $p\in P\subset O_p$. In both cases we conclude that $\mathcal P$ is not a Pytkeev $\pi$-network at $p$. Consequently, $X$ does not have the Pytkeev property and fails to be a $\Pyt_0$-space.
\end{proof}

We shall say that a topological space $X$ has {\em countable fan} ({\em open-}){\em tightness} if for each point $x\in X$ and (open) sets $A_n\subset X$, $n\in\w$, with $x\in\bigcap_{n\in\w}\bar A_n$ there are finite sets $F_n\subset A_n$, $n\in\w$, such that for any neighborhood $O_x\subset X$ of $x$ the set $\{n\in\w:F_n\cap O_x\ne\emptyset\}$ is infinite.
The countable fan tightness is a well-known notion in the topological theory of function spaces (see \cite[II.\S2]{Arh}). Its ``open'' modification was introduced by M.~Sakai in \cite{MSak} (as property $(\#)$~).

\begin{theorem}\label{t1.6} A (regular) topological space $X$ is second countable if and only if $X$ has a countable Pytkeev network and countable fan (open-)tightness.
\end{theorem}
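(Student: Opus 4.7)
My plan is as follows.

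The ``only if'' direction is routine. In a second-countable space, any countable base $\{B_n\}$ is a Pytkeev network (a basic neighborhood $B_n\subset O_x$ with $x\in B_n$ automatically meets every set $A$ accumulating at $x$ in infinitely many points). For countable fan tightness, shrinking to a decreasing countable local base $\{B_n^*\}$ at $x$, I choose $a_n\in A_n\cap B_n^*$ and set $F_n=\{a_n\}$; any neighborhood of $x$ eventually contains $B_n^*$ and hence all but finitely many $a_n$.

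For the ``if'' direction I first put the network into normal form. Replacing $\mathcal{N}$ by $\mathcal{N}\vee\mathcal{N}$ makes it strict (as noted just after the commutative diagram above), and in a regular space the map $N\mapsto\overline{N}$ preserves the Pytkeev property: for any open $O$ containing $x$, regularity supplies an open $V$ with $x\in V\subset\overline{V}\subset O$, and applying Pytkeev inside $V$ produces $N$ with $\overline{N}\subset\overline{V}\subset O$. Closing further under finite unions, I may assume $\mathcal{N}$ is a strict Pytkeev network of closed sets, closed under finite unions. I will then show that the countable family $\mathcal{B}:=\{\mathrm{int}(N):N\in\mathcal{N}\}$ is a base for the topology of $X$, giving second countability.

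Fix an open $U\subset X$ and $x\in U$; enumerate $\{L\in\mathcal{N}:x\in L\subset U\}=\{L_k\}_{k\in\w}$ and set $N_k:=L_0\cup\cdots\cup L_k\in\mathcal{N}$, an increasing sequence of closed subsets of $U$ each containing $x$. Assume for contradiction that $x\notin\mathrm{int}(N_k)$ for every $k$. Then each $A_k:=U\setminus N_k$ is open (since $N_k$ is closed and $U$ open) and $x\in\overline{A_k}$ (every neighborhood $V\subset U$ of $x$ must contain a point outside $N_k$). Applying countable fan open-tightness to $(A_k)_{k\in\w}$ yields finite $F_k\subset A_k$ such that $\{k:F_k\cap O_x\ne\emptyset\}$ is infinite for every neighborhood $O_x$ of $x$. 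Put $F:=\bigcup_k F_k$ and $Y:=U\setminus\bigcup_k N_k$.

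Now split into two cases. If $F$ accumulates at $x$, strict Pytkeev yields $L_{k_0}$ with $|L_{k_0}\cap F|=\infty$; but for $k\ge k_0$ we have $F_k\subset A_k\subset X\setminus N_k\subset X\setminus L_{k_0}$ (using $L_{k_0}\subset N_{k_0}\subset N_k$), so $F\cap L_{k_0}=\bigcup_{k<k_0}(F_k\cap L_{k_0})$ is finite, a contradiction. If $F$ does not accumulate at $x$, some neighborhood $V$ of $x$ has $F\cap V$ finite, and the pigeonhole principle applied to the infinitely many $F_k$ meeting $V$ (each landing in the finite set $F\cap V$) produces $y_1\in F\cap V$ lying in $F_k$ for infinitely many $k$, so $y_1\in\bigcap_k A_k\subset Y$. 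Reapplying the pigeonhole inside $V\setminus\{y_1\}$, $V\setminus\{y_1,y_2\}$, \ldots (each an open neighborhood of $x$ by $T_1$-ness, each still met by infinitely many $F_k$ via fan open-tightness) produces pairwise distinct $y_i\in Y\cap V$, and running the same procedure in $V\cap W$ for arbitrary neighborhoods $W$ of $x$ shows $Y$ accumulates at $x$; strict Pytkeev then delivers $L_{k_0}$ with $|L_{k_0}\cap Y|=\infty$, contradicting $L_{k_0}\cap Y\subset L_{k_0}\setminus\bigcup_k N_k=\emptyset$. Either case being impossible, $x\in\mathrm{int}(N_k)$ for some $k$, so $\mathcal{B}$ is a base. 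I expect the main obstacle to be Case~B: converting mere finiteness of $F\cap V$ into genuine accumulation of the residual set $Y$ at $x$ via the iterated pigeonhole, which in turn relies essentially on the preparatory closurization of $\mathcal{N}$ so that each $A_k$ is open and fan open-tightness is applicable.
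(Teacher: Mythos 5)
Your argument is correct and follows the same overall strategy as the paper: normalize the Pytkeev network to consist of closed sets closed under finite unions, and show that the interiors form a base by feeding the sets $A_k=U\setminus N_k$ into countable fan open-tightness and then catching the resulting accumulating set with the Pytkeev property. The one structural divergence is your choice to enumerate only the members of $\mathcal N$ that contain $x$ and lie in $U$. This costs you the covering identity $\bigcup_k N_k=U$, so a point of $\bigcup_k F_k$ may lie in infinitely many of the $F_k$'s, and you are forced into the two-case analysis with the residual set $Y=U\setminus\bigcup_k N_k$, together with the $T_1$-based iterated pigeonhole --- legitimate for the regular version, but extra machinery. The paper instead enumerates all $N\in\mathcal N$ with $N\subset U$; since $\mathcal N$ is a network, this family covers $U$, so each finite set $F_k$ is absorbed by some $\bigcup_{i\le n}N_i$ and is therefore disjoint from every later $F_m$. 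Consequently each point of $A=\bigcup_k F_k$ lies in only finitely many $F_k$, the set $A$ automatically accumulates at $x$ (your Case A), and your Case B never arises; the final contradiction is then identical to yours, using that the Pytkeev witness $N\subset U$ equals some $N_m$ and meets $A$ only inside $\bigcup_{k<m}F_k$. Note also that your write-up treats only the parenthetical version (regular space, fan open-tightness); the unparenthesized version for arbitrary spaces with countable fan tightness runs through the same computation without taking closures of the sets $N$, since plain fan tightness does not require the sets $A_k$ to be open.
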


\begin{proof} The ``only if'' part is trivial. To prove the ``if'' part, assume that a (regular) topological space $X$ with countable fan tightness has a countable Pytkeev network $\mathcal N$. We lose no generality assuming that $\mathcal N$ is closed under finite unions.
If $X$ is regular, then we can replace each set $N\in\mathcal N$ by its closure and assume that $\mathcal N$ consists of closed subsets of $X$.

 We claim that the countable family $\mathcal B$ consisting of the interiors $N^\circ$ of the sets $N\in\mathcal N$ is a base of the topology of $X$.

Given any open set $U\subset X$ and a point $x\in U$, we need to find a set $N\in\mathcal N$ with $x\in N^\circ\subset U$. Consider the countable subfamily $\mathcal N_U=\{N\in\mathcal N\colon N\subset U\}$ and let $\mathcal N_U=\{N_k\}_{k\in\w}$ be its enumeration. Assuming that $x\notin N^\circ$ for every $N\in\mathcal N_U$ and taking into account that the family $\mathcal N_U$ is closed under finite unions, we conclude that the every $k\in\w$ the (open) set $A_k=X\setminus \bigcup_{i\le k}N_i$ contains the point $x$ in its closure. By the countable fan (open-)tightness of $X$, there are finite sets $F_k\subset A_k\cap U$, $k\in\w$, such that every neighborhood $O_x$ of $x$ meets infinitely many sets $F_k$, $k\in\w$. Since $\bigcup_{k\in\w}N_k=U$, for every $k\in\w$ there is $n\in\w$ such that $F_k\subset \bigcup_{i\le n}N_i$ and hence $F_k\cap F_m=\emptyset$ for all $m\ge n$. This observation implies that $x$ is an accumulation point of the set $A=\bigcup_{k\in\w}F_k$.

Since $\mathcal N$ is a Pytkeev network, there is a set $N\in\mathcal N$ such that $x\in N\subset U$ and $N\cap A$ is infinite. The set $N$ belongs to the family $\mathcal N_U$ and hence coincides with some set $N_m$. The choice of the sequence $(F_k)$ guarantees that $N\cap A=N\cap\bigcup_{k\in\w}F_k\subset \bigcup_{k<m}F_k$ is finite, which is a desired contradiction showing that $x\in N^\circ\subset N\subset U$ for some $N\in\mathcal N$. So, $\mathcal B=\{N^\circ:N\in\mathcal N\}$ is a countable base of the topology of $X$.
\end{proof}

Taking into account that each regular second countable space is metrizable and separable, we see that Theorem~\ref{t1.6} implies the following characterization of separable metrizable spaces.

\begin{corollary}\label{c:fan-open} A topological space $X$ is separable and metrizable if and only if $X$ is a $\Pyt_0$-space with countable fan open-tightness.
\end{corollary}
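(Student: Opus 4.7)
The proof is essentially a packaging of Theorem~\ref{t1.6} together with the Urysohn metrization theorem, so my plan is short and has no real obstacle. I would organize it as a direct equivalence argument, handling each implication in turn.

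For the ``only if'' direction, assume $X$ is separable and metrizable. Then $X$ is second countable, and any countable base $\mathcal B$ is in particular a countable network. I would then check that $\mathcal B$ (or a suitable enlargement by finite unions) serves as a Pytkeev network: at any point $x$, any neighborhood $O_x$, and any set $A$ accumulating at $x$, one finds a basic open set $B\in\mathcal B$ with $x\in B\subset O_x$, and since $A$ accumulates at $x$, $B\cap A$ is infinite. Hence $X$ is a $\Pyt_0$-space. For countable fan open-tightness, I would use first countability: fix a decreasing neighborhood base $(U_n)_{n\in\w}$ at $x$, and for any family of open sets $A_n$ with $x\in\overline{A_n}$ simply pick a single point $x_n\in A_n\cap U_n$ and take $F_n=\{x_n\}$; then every neighborhood $O_x$ of $x$ contains some $U_n$, and therefore contains $x_m\in F_m$ for all $m\ge n$, so $\{m : F_m\cap O_x\neq\emptyset\}$ is cofinite, in particular infinite.

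For the ``if'' direction, assume $X$ is a $\Pyt_0$-space with countable fan open-tightness. By definition a $\Pyt_0$-space is a regular space possessing a countable Pytkeev network, so the hypotheses of Theorem~\ref{t1.6} (in its ``open-tightness'' variant for regular spaces) are satisfied. Applying that theorem yields that $X$ is second countable. Since $X$ is regular and second countable, the Urysohn metrization theorem shows that $X$ is metrizable, and second countability gives separability as well.

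The only point that requires any care is making sure the ``open-tightness'' version of Theorem~\ref{t1.6} really applies, since that theorem was stated for both the fan tightness and the open-tightness variants simultaneously; but this is explicit in the statement. No step in the argument poses a genuine obstacle — the corollary is a clean two-line consequence of Theorem~\ref{t1.6} and Urysohn, and my writeup would be correspondingly brief.
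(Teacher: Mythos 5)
Your proposal is correct and follows essentially the same route as the paper, which likewise derives the corollary directly from Theorem~\ref{t1.6} together with the fact that a regular second countable space is metrizable and separable. Your extra verification of the ``only if'' direction (a countable base is a Pytkeev network, and first countability gives countable fan tightness, hence fan open-tightness) is sound and merely spells out what the paper leaves implicit.
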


%\begin{remark}
%The results of this section show that for any space $X$ the following implications hold:
%$$\xymatrix{
%\mbox{metrizable separable}\ar@{=>}[r] &\mbox{ $\Pyt_0$-space }\ar@{=>}[d]\ar@{=>}[r]\ar@/_1.5pc/_{\mbox{\tiny + countably fan open-tight}}[l]&\mbox{$\aleph_0$-space}\ar@{<=>}[r]\ar@/_1.5pc/_{\mbox{\tiny + sequential}}[l]&\mbox{regular with countable $\cs^*$-network}\ar@{=>}[d]\\
%&\mbox{strong Pytkeev property}&&\mbox{cosmic}.
%}
%$$
%\end{remark}
%\smallskip

\section{Function spaces between $\Pyt_0$-spaces}\label{s2}

One of the most important results related to $\aleph_0$-spaces is Michael's Theorem \cite{Mi} (see also \cite[11.5]{Gru}) saying that for any $\aleph_0$-spaces $X,Y$ the space $C_k(X,Y)$ of continuous functions endowed with the compact-open topology is an $\aleph_0$-space. If $Y=\IR$, then we shall write $C_k(X)$ instead of $C_k(X,\IR)$.

It turns out that this Michael's theorem admits an extension to $\Pyt_0$-spaces: for any $\aleph_0$-space $X$ and a $\Pyt_0$-space $Y$ the function space $C_k(X,Y)$ is a $\Pyt_0$-space. In fact, we prove this result not only for the compact-open topology on $C(X,Y)$ but for a wide class of so-called $\I$-open topologies determined by ideals $\I$ of compact sets.

Let $X$ be a topological space. A family $\I$ of compact subsets of $X$ is called {\em an ideal of compact sets} if $\bigcup\I=X$ and for any sets $A,B\in\I$ and compact subset $K\subset X$ we get $A\cup B\in\I$ and $A\cap K\in\I$.

For an ideal $\I$ of compact subsets of a topological space $X$ and a topological space $Y$ by $C_\I(X,Y)$ we shall denote the space $C(X,Y)$ of all continuous functions from $X$ to $Y$, endowed with the {\em $\I$-open topology} generated by the subbase consisting of the sets
$$[K;U]=\{f\in C_\I(X,Y):f(K)\subset U\}$$where $K\in\I$ and $U$ is an open subset of $Y$.

If $\I$ is the ideal of all compact (resp. finite) subsets of $X$, then the $\I$-open topology coincides with the compact-open topology (resp. the topology of pointwise convergence) on $C(X,Y)$. In this case the function space $C_\I(X,Y)$ will be denoted by $C_k(X,Y)$ (resp. $C_p(X,Y)$).

We shall be interested in detecting $\Pyt_0$-spaces among function spaces $C_\I(X,Y)$. For this we should impose some restrictions on the ideal $\I$.

\begin{definition} An ideal $\I$ of compact subsets of $X$ is defined to be {\em discretely-complete} if for any compact subsets $A,B\subset X$ such that $A\setminus B$ is a countable discrete subspace of $X$ the inclusion $B\in\I$ implies $A\in\I$.
\end{definition}

It is clear that the ideal of all compact subsets of $X$ is discretely-complete. More generally, for any infinite cardinal $\kappa$ the ideal $\I$ of compact subsets of cardinality $\le\kappa$ in  $X$ is discretely-complete. On the other hand, the ideal of finite subsets of $X$ is discretely-complete if and only if $X$ contains no infinite compact subset with finite set of non-isolated points.

\begin{theorem}\label{main} Let $X$ be a Hausdorff space with countable $k$-network and $\I$ be a discretely-complete ideal of compact subsets of $X$. For any  topological space $Y$ with a countable Pytkeev network, the function space $C_\I(X,Y)$ has a countable Pytkeev network too.
\end{theorem}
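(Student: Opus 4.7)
The plan is to construct the Pytkeev network in $C_\I(X,Y)$ directly from countable networks of $X$ and $Y$. Fix a countable $k$-network $\K$ in $X$ and a countable strict Pytkeev network $\mathcal N$ in $Y$, both closed under finite unions; the existence of the latter is ensured by the remark after the diagram (replace $\mathcal N$ with $\mathcal N\vee\mathcal N$). Using the notation $[K;N]:=\{g\in C(X,Y):g(K)\subset N\}$, I propose the countable family
\[
  \mathcal M \;:=\; \left\{\bigcap_{i=1}^{n}[K_i;N_i] : n\in\IN,\ K_i\in\K,\ N_i\in\mathcal N\right\}
\]
as a Pytkeev network for $C_\I(X,Y)$.

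To verify the Pytkeev property at $f\in C_\I(X,Y)$, fix a basic neighborhood $O_f = \bigcap_{j\le m}[L_j;V_j]\ni f$ (with $L_j\in\I$ and $V_j\subset Y$ open) and a set $A\subset C_\I(X,Y)$ accumulating at $f$. The goal is a single $M\in\mathcal M$ with $f\in M\subset O_f$ and $|M\cap A|=\aleph_0$. For the containment $M\subset O_f$, I would for each $j$ invoke the $k$-network properties of $\K$ in $X$ and of $\mathcal N$ in $Y$ (the latter by Proposition~\ref{p1.2}) to produce, by a joint iterative refinement, $K_j\in\K$ and $N_j\in\mathcal N$ satisfying $L_j\subset K_j$ and $f(K_j)\subset N_j\subset V_j$; this guarantees $f\in\bigcap_j[K_j;N_j]\subset O_f$. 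The mild technicality that $N_j$ need not be open in $Y$ is handled by alternately shrinking $K_j$ and enlarging $N_j$ along the countable networks $\K,\mathcal N$ until the compatibility $f(K_j)\subset N_j$ stabilizes (finite unions within $\K$ and $\mathcal N$ guarantee the iteration terminates after finitely many steps).

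The main obstacle is ensuring that $M\cap A$ is infinite; my plan here is by contradiction, deploying both the discrete completeness of $\I$ and the full Pytkeev (not merely $k$-network) strength of $\mathcal N$. If no choice of $(K_j,N_j)_j$ makes $M\cap A$ infinite, enumerate the countably many candidates $M\in\mathcal M$ with $f\in M\subset O_f$ and extract by diagonalization a sequence $(a_n)_{n\in\w}\subset A\cap O_f$ eventually escaping every candidate. Each $a_n$ must then differ from $f$ somewhere on $L:=\bigcup_j L_j$, so choose witness points $x_n\in L$ with $a_n(x_n)\ne f(x_n)$; after passing to a subsequence, one arranges $x_n\to x^\infty\in L_{j^\star}$ in $X$ and $a_n(x_n)$ accumulating at $f(x^\infty)\in V_{j^\star}$ in $Y$ (invoking metrizability of compact subsets of $X$, which is automatic since $X$ has a countable network). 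Since $L\cup\{x_n:n\in\w\}$ is compact and differs from $L\in\I$ by a countable discrete subspace, the discretely-complete hypothesis places it in $\I$, and the $k$-network $\K$ covers it by a finite union $K_{j^\star}^\star\in\K$. Applying the Pytkeev property of $\mathcal N$ at $f(x^\infty)$ inside $V_{j^\star}$ yields $N_{j^\star}^\star\in\mathcal N$ with $N_{j^\star}^\star\subset V_{j^\star}$ catching infinitely many of the points $a_n(x_n)$; reassembling $(K_j,N_j)_j$ with this new pair at index $j^\star$ produces an $M^\star\in\mathcal M$ with $f\in M^\star\subset O_f$ meeting infinitely many $a_n$—the desired contradiction. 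The genuine difficulty lies in the \emph{simultaneous} coordinate-wise coordination: making one finite intersection $M$ both small enough to lie inside $O_f$ (a restriction on each $N_j$) and large enough to meet $A$ infinitely (requiring $N_{j^\star}$ to absorb a tail of the witnessing images), which is precisely where both hypotheses—the discretely-complete structure of $\I$ and the strict Pytkeev property of $\mathcal N$—are indispensable.
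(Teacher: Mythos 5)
Your candidate family is the right one (it is exactly the family $[\kern-1.5pt[\mathcal K;\mathcal P]\kern-1.5pt]$ used in the paper), but the verification breaks down at several points. First, the ``containment'' step is not a mild technicality: to get $f\in[K_j;N_j]$ you need $f(K_j)\subset N_j$, and neither direction of your alternating refinement works --- $f(K_j)$ is not compact, so the $k$-network property of $\mathcal N$ in $Y$ does not apply to it, and $f^{-1}(N_j)$ is not open, so the $k$-network property of $\K$ in $X$ does not apply to it either; there is no reason your iteration terminates. The paper resolves this with a genuine argument (Claim~\ref{cl1}): it takes a \emph{decreasing} sequence $K_{i,j}$ in $\K$ converging to $C_i$ and an \emph{increasing} sequence $P_{i,j}$ in $\mathcal P$ exhausting $U_i$, and proves via a convergent-subsequence argument (using metrizability of $C_i$) that the increasing sets $\F_j=\bigcap_i[K_{i,j};P_{i,j}]$ exhaust all of $O_f$, hence eventually contain $f$.

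The more serious gaps are in the accumulation step. (i) An escaping function $a_n$ need not differ from $f$ anywhere on $L=\bigcup_jL_j$: the witness to $a_n\notin[K;N]$ is a point of $K$, which is strictly larger than $L_j$, so your points $x_n$ cannot be chosen in $L$. (ii) Even granting convergent witnesses $x_n\to x^\infty$, the values $a_n(x_n)$ have no reason to accumulate at $f(x^\infty)$; accumulation of $A$ at $f$ in the $\I$-open topology gives no pointwise control of this kind. (iii) The final reassembly is a non sequitur: finding $N^\star$ with $a_n(x_n)\in N^\star$ for infinitely many $n$ does not put the \emph{function} $a_n$ into $[K^\star;N^\star]$ (you would need $a_n(K^\star)\subset N^\star$), let alone into the full intersection $M^\star$. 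The paper's endgame is structurally different and this is where discrete-completeness actually earns its keep: assuming every $\F_j\cap\A$ is finite, it shows the witness values $B_i=\{\alpha(x_\alpha)\}$ accumulate at \emph{no} point of the compact set $f(C_i)$ (using the Pytkeev property of $\mathcal P$), extracts an open $V_i\supset f(C_i)$ with $V_i\cap B_i$ finite, and then forms the compact set $C_i'=C_i\cup\{x_\alpha\}$ --- which lies in $\I$ precisely by discrete-completeness --- so that $\bigcap_i[C_i';V_i]$ is a legitimate $\I$-open neighborhood of $f$ containing only finitely many members of $\A$, contradicting accumulation. In your write-up discrete-completeness is invoked only to cover a compact set by the $k$-network, which never required ideal membership; the hypothesis does no work in your argument, which is a sign the argument is not yet tracking the actual difficulty.
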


\begin{proof}
Let $\mathcal K$ be a countable $k$-network on the space $X$ and $\mathcal P$ be a countable Pytkeev network on the space $Y$. We lose no generality assuming that the networks $\mathcal K$ and $\mathcal P$ are closed under finite unions and finite intersections.

For two subsets $K\subset X$ and $P\subset Y$ let
$$[K;P]=\{f\in C_\I(X,Y):f(K)\subset P\}\subset C_\I(X,Y).$$
We claim that the countable family
$$[\kern-1.5pt[\mathcal K;\mathcal P]\kern-1.5pt]=\{[K_1;P_1]\cap\dots\cap [K_n;P_n]:n\in\IN,\;K_1,\dots,K_n\in\K,\;P_1,\dots,P_n\in\mathcal P\}$$
is a strict Pytkeev network for the function space $C_\I(X,Y)$.

Given a subset $\A\subset C_\I(X,Y)$, a function $f\in\bar\A$ and a neighborhood $O_f\subset C_\I(X,Y)$ of $f$ we need to find a set $\F\in[\kern-1.5pt[\K,\PP]\kern-1.5pt]$ such that $f\in \F\subset O_f$, and moreover $\A\cap \F$ is infinite if $f$ is an accumulation point of the set $\A$.

We lose no generality assuming that $\A\subset O_f$ and the neighborhood $O_f$ is of basic form
$$O_f=[C_1;U_1]\cap\dots\cap [C_n;U_n]$$
for some compact sets $C_1,\dots,C_n\subset X$ and some open sets $U_1,\dots,U_n\subset Y$.
%By the regularity of $Y$, for every $i\in\{1,\dots,n\}$ the compact subset $f(C_i)\subset U_i$ has an open neighborhood $V_i\subset X$ whose closure $\bar V_i$ in $X$ is contained in $U_i$.

For every $i\le n$ consider the countable subfamily $\K_i=\{K\in\K:C_i\subset K\subset f^{-1}(U_i)\}$ and let $\K_i=\{K'_{i,j}\}_{j\in\w}$ be its enumeration. For every $j\in\w$ let
$K_{i,j}=\bigcap_{k\le j}K'_{i,k}$. It follows that the decreasing sequence of sets  $(K_{i,j})_{j=1}^\infty$ converges to $C_i$ in the sense that each neighborhood $O(C_i)\subset X$ contains all but finitely many sets $K_{i,j}$ for $j\in\IN$.

For every $i\le n$ consider the countable subfamily $\mathcal P_i=\{P\in\mathcal P:f(C_i)\subset P\subset U_i\}$ in the Pytkeev network $\mathcal P$ and let $\mathcal P_i=\{P'_{i,j}\}_{j\in\w}$ be its enumeration. For every $j\in\w$ let $P_{i,j}=\bigcup_{k\le j}P_{i,k}'$. It follows that $(P_{i,j})_{j\in\w}$ is an increasing sequence of sets with $f(C_i)\subset \bigcap_{j\in\w}P_{i,j}\subset \bigcup_{j\in\w}P_{i,j}=U_i$.

Then the sets
$$\F_j=\bigcap_{i=1}^n[K_{i,j};P_{i,j}]\in[\kern-1.5pt[\mathcal K;\mathcal P]\kern-1.5pt]$$form an increasing sequence of sets in the function space $C_{\I}(X,Y)$.

\begin{claim}\label{cl1} $\bigcup_{j\in\w}\F_j=O_f=\bigcap_{i=1}^n[C_i;U_i]$.
\end{claim}

\begin{proof} Assume that some function $g\in O_f=\bigcap_{i=1}^n[C_i,U_i]$ does not belong to $\bigcup_{j\in\w}\F_j$. Then for every $j\in\w$ we can find an index $i_j\in\{1,\dots,n\}$ such that $g\notin[K_{i_j,j},P_{i_j,j}]$, which means that $g(x_j)\notin P_{i_j,j}$ for some point $x_j\in K_{i_j,j}$. By the Pigeonhole Principle, for some $i\in\{1,\dots,n\}$ the set $J_i=\{j\in\w:i_j=i\}$ is infinite. Since the decreasing sequence $(K_{i,j})_{j\in J_i}$ converges to the compact set $C_i$, the set $C_i\cup\{x_j\}_{j\in J_i}$ is compact. Moreover the compact set $C_i$ has countable network and hence is metrizable.

Then we can find an infinite subset $J'\subset J_i$ such that the sequence $\{x_j\}_{j\in J'}$ converges to some point $x_\infty\in C_i$. By the continuity of the function $g$, the sequence $\{g(x_j)\}_{j\in J'}$ converges to the point $g(x_\infty)\in g(C_i)\subset U_i$. Since each countable Pytkeev network is a $k$-network (see Proposition~\ref{p1.2}), there is a finite subfamily $\mathcal P'\subset\mathcal P$ such that $f(C_i)\cup\{g(x_j)\}_{j\in J'}\subset\bigcup\mathcal P'\subset U_i$. The set $P'=\bigcup\mathcal P'\in\mathcal P$ belongs to the family $\mathcal P_i$ and hence $P'=P'_{i,j'}$ for some number $j'$. Then for every $j\in J'$ with $j\ge j'$ we get $g(x_{j})\in P'\subset P_{i,j}$ which contradicts the choice of the point $x_j$. This contradiction completes the proof of Claim~\ref{cl1}.
\end{proof}

The following claim completes the proof of Theorem~\ref{main}.

\begin{claim}\label{cl2} If $f$ is an accumulation point of $\A$, then  for some $j\in\w$ the intersection $\F_j\cap \A$ is infinite.
\end{claim}

\begin{proof} Assume conversely that for every $j\in\w$ the intersection $\A_j=\F_j\cap\A$ is finite.
Then $\A=\A\cap O_f=\bigcup_{j\in\w}\A\cap\F_j=\bigcup_{j\in\w}\A_j$ is the countable union of an increasing sequence $(\A_j)_{j\in\w}$ of finite subsets of $C_k(X,Y)$.

To each function $\alpha\in\A\setminus\A_0$ assigns a (unique) number $j_\alpha\in\w$ such that $\alpha\in\A_{j_\alpha+1}\setminus\A_{j_\alpha}=\A_{j_\alpha+1}\setminus\F_{j_\alpha}$. Since $\alpha\notin\F_{j_\alpha}=\bigcap_{i=1}^n[K_{i,j_\alpha};P_{i,j_\alpha}]$, there are an index  $i_\alpha\in\{1,\dots,n\}$ such that $\alpha\notin[K_{i_\alpha,j_\alpha},P_{i_\alpha,j_\alpha}]$ and a point $x_\alpha\in K_{i_\alpha,j_\alpha}$ with $\alpha(x_\alpha)\notin P_{i_\alpha;j_\alpha}$.

For every $i\in\{1,\dots,n\}$ consider the subfamily
$$\A(i)=\{\alpha\in \A\setminus\A_0:i_\alpha=i\}$$ and observe that $\A\setminus\A_0=\bigcup_{i=1}^n\A(i)$.
Consider the subset $B_i=\{\alpha(x_\alpha):\alpha\in \A(i)\}\subset Y$. We claim that no point $y\in f(C_i)$ is an accumulation point of the set $B_i$ in $Y$. Assuming conversely that some point $y\in f(C_i)$ is an accumulation point of the set $B_i$, we could find a set $P\in\mathcal P$ such that $P\subset U_i$ and $P\cap B_i$ is infinite. By Proposition~\ref{p1.2}, the countable Pytkeev network $\mathcal P$ is a $k$-network in $Y$, so there is a set $P'\in\mathcal P$ such that $f(C_i)\subset P'\subset U_i$ and then the set $P_y=P\cup P'$ belongs to the family $\mathcal P_i$ and hence coincides with some set $P_{i,j}'\subset P_{i,j}$. The choice of the point $x_\alpha$ guarantees that $\alpha(x_\alpha)\notin P_{i,j}$ for all $\alpha\in \A(i)\setminus\A_{j}$, which implies that the intersection $B_i\cap P\subset B_i\cap P_y\subset\{\alpha(x_\alpha):\alpha\in\A(i)\cap\A_j\}$ is finite. But this contradicts the choice of the set $P$.

This contradiction shows that each point $y\in f(C_i)$ has an open neighborhood $O_y\subset U_i$ with finite intersection $O_y\cap B_i$. By the compactness of $f(C_i)$ the open cover $\{O_y:y\in f(C_i)\}$ of $f(C_i)$ has a finite subcover $\{O_y:y\in F_i\}$ and then $V_i=\bigcup_{y\in F_i}O_y\subset U_i$ is an open neighborhood of $f(C_i)$ having finite intersection with the set $B_i$.

Since the decreasing sequence $(K_{i,j})_{j\in\w}$ converges to $C_i$, there is a number $j_i\in\w$ such that $K_{i,j_i}\subset f^{-1}(V_i)$. We can take $j_i$ so large that $V_i\cap\{\alpha(x_\alpha):\alpha\in\A(i)\setminus\A_{j_i}\}=\emptyset$. It follows that
$$C_i'=C_i\cup\{x_\alpha:\alpha\in \A(i)\setminus\A_{j_i}\}\subset K_{i,j_i}$$is a compact subset of $f^{-1}(V_i)$.
Since the complement $C_i'\setminus C_i\subset\{x_\alpha:\alpha\in\A(i)\setminus\A_{j_i}\}$ is countable and consists of isolated points, the discrete-completeness of the ideal $\I$ guarantees that $C'_i\in\I$.

Then
$$V_f=\bigcap_{i=1}[C_i';V_i]$$ is an open neighborhood of $f$ in $C_\I(X,Y)$. Observe that for every $i\in\w$ and $\alpha\in\A(i)\setminus\A_{j_i}$, we get $x_\alpha\in C_i'$ and $\alpha(x_\alpha)\in Y\setminus V_i$, so $\alpha\notin V_f$. Since the set $\A'=\bigcup_{i=1}^n\A(i)\setminus\A_{j_i}$ has finite complement $\A\setminus\A'\subset\bigcup_{i=1}^n\A_{j_i}$, the intersection $\A\cap V_f$ is finite, which is not possible as $f$ is an accumulation point of $\A$. This contradiction completes the proof of Claim~\ref{cl2}.
\end{proof}

Claims~\ref{cl1}, \ref{cl2} witness that the countable family  $[\kern-1.5pt[\mathcal K;\mathcal P]\kern-1.5pt]$ is a strict Pytkeev network for the function space $C_\I(X,Y)$.
\end{proof}

\begin{corollary}\label{tipa} For any separable metrizable spaces $X,Y$ the function space $C_k(X,Y)$ is a $\Pyt_0$-space.
\end{corollary}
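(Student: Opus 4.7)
The plan is to obtain the corollary as a direct application of Theorem~\ref{main} with $\I$ taken to be the ideal of \emph{all} compact subsets of $X$, so that $C_\I(X,Y) = C_k(X,Y)$. Three hypotheses need to be checked: that $X$ is Hausdorff with a countable $k$-network, that $\I$ is discretely-complete, and that $Y$ has a countable Pytkeev network. After that I still need to record that $C_k(X,Y)$ is regular, since Theorem~\ref{main} only delivers the network, not the separation axiom demanded by the definition of a $\Pyt_0$-space.

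First, since $X$ is separable metrizable, it is second countable and Hausdorff, and it is classical (or directly by Michael's original result) that $X$ is an $\aleph_0$-space, so it has a countable $k$-network. The ideal of all compact subsets is trivially discretely-complete, as explicitly noted after the definition of discrete-completeness. For the Pytkeev network on $Y$: being metrizable, $Y$ is first countable and hence sequential; being separable metrizable, it has a countable $k$-network, which is automatically a $\cs^*$-network. Proposition~\ref{p1.3s} then upgrades this $\cs^*$-network in the sequential space $Y$ to a countable Pytkeev network. (Alternatively one can argue directly that a countable base of the metric space $Y$ is itself a Pytkeev network: given $y\in Y$, a neighborhood $O_y$, and a set $A$ accumulating at $y$, first extract a sequence in $A\setminus\{y\}$ converging to $y$, and then pick a sufficiently small basic ball $B\subset O_y$ containing $y$, which necessarily catches an infinite tail of the sequence.)

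With all hypotheses in place, Theorem~\ref{main} supplies a countable Pytkeev network on $C_k(X,Y)$. To conclude that $C_k(X,Y)$ is a $\Pyt_0$-space it remains only to note that $C_k(X,Y)$ is regular (in fact Tychonoff) whenever $Y$ is, which is a standard elementary fact about compact-open topologies. This completes the proof.

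I do not foresee any genuine obstacle: the content of the corollary is entirely carried by Theorem~\ref{main}, and the only real care needed is the bookkeeping step of verifying the hypotheses from the metrizable hypothesis on $Y$ and separately recording regularity of the function space. If one wanted a more self-contained argument avoiding Proposition~\ref{p1.3s}, the direct verification that a countable base of a metric space is a Pytkeev network (sketched above) is the most efficient route.
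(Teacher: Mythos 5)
Your proposal is correct and follows exactly the route the paper intends: the corollary is stated without separate proof precisely because it is the specialization of Theorem~\ref{main} to the discretely-complete ideal of all compact sets, with the hypotheses on $X$ and $Y$ supplied by the facts that separable metrizable spaces are $\aleph_0$-spaces and $\Pyt_0$-spaces (your Proposition~\ref{p1.3s} argument, or even more simply the observation that in a first countable space any countable base is already a Pytkeev network, since a basic neighborhood of an accumulation point of $A$ automatically meets $A$ in an infinite set). Your extra remark on the regularity of $C_k(X,Y)$ is a correct and appropriate piece of bookkeeping that the paper leaves implicit.
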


\begin{corollary}\label{tipa2} For any $\aleph_0$-space $X$ the function space $C_k(X)=C_k(X,\mathbb R)$ is a $\Pyt_0$-space.
\end{corollary}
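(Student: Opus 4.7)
The plan is to apply Theorem~\ref{main} directly, taking $Y=\IR$ and letting $\I$ be the ideal of \emph{all} compact subsets of $X$. The three hypotheses of the theorem need to be verified. First, since $X$ is an $\aleph_0$-space, it is by definition regular (hence Hausdorff) and possesses a countable $k$-network. Second, the ideal of all compact subsets of $X$ is discretely-complete, as was observed immediately after the definition of discrete-completeness (the union of a compact set with a countable discrete set $A\setminus B$ contained inside another compact set is trivially compact). Third, $\IR$ is separable and metrizable, so $\IR$ is a $\Pyt_0$-space and in particular carries a countable Pytkeev network.

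With these ingredients in place, Theorem~\ref{main} immediately yields that $C_\I(X,\IR)=C_k(X)$ admits a countable Pytkeev network.

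It remains only to check that $C_k(X)$ is a regular topological space, since this is what separates ``countable Pytkeev network'' from ``$\Pyt_0$-space''. Here I would invoke the standard fact that pointwise addition and scalar multiplication are continuous on $C_k(X)$ in the compact-open topology, making $C_k(X)$ a topological vector space over $\IR$. The Hausdorff property is transparent: given $f\neq g$ in $C(X,\IR)$, pick $x\in X$ with $f(x)\neq g(x)$ and disjoint neighborhoods $U,V\subset\IR$ of $f(x),g(x)$; then the subbasic open sets $[\{x\};U]$ and $[\{x\};V]$ separate $f$ and $g$. A Hausdorff topological vector space is automatically Tychonoff, hence regular, completing the verification.

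There is no substantive obstacle: this is a direct corollary of Theorem~\ref{main} once one recognizes that every $\aleph_0$-space supplies the required input, that the full compact ideal is discretely-complete, and that the target $\IR$ is as nice as one could hope. The only slightly non-formal point is the regularity of $C_k(X)$, which is handled by the topological-group (in fact topological-vector-space) structure.
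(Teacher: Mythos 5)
Your proposal is correct and matches the paper's (implicit) argument: the paper presents this as an immediate consequence of Theorem~\ref{main} with $Y=\IR$ and $\I$ the ideal of all compact sets, whose discrete-completeness it notes explicitly right after the definition. Your extra care in checking regularity of $C_k(X)$ via its topological vector space structure fills in the only step the paper leaves unstated, and does so correctly.
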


\begin{remark} Corollary~\ref{tipa2} implies that for each $\aleph_0$-space $X$ the function space $C_k(X)$ has the strong Pytkeev property, which answers a problem posed in \cite{GKL}. For every Polish space $X$ the strong Pytkeev property of the function spaces $C_k(X)$ was proved (by a totally different method) in \cite{TZ}.
\end{remark}

\begin{remark} In contrast to function spaces $C_k(X)$, the function spaces $C_p(X)$ endowed with the topology of pointwise convergence rarely are $\Pyt_0$-spaces. By \cite{MSak}, for a Tychonoff space $X$ the following conditions are equivalent:
\begin{enumerate}
\item $X$ is countable;
\item $C_p(X)$ has countable $\cs^*$-character;
\item $C_p(X)$ has the strong Pytkeev property;
\item $C_p(X)$ is a $\Pyt_0$-space;
\item $C_p(X)$ is an $\aleph_0$-space;
\item $C_p(X)$ is metrizable.
\end{enumerate}
This shows that the discrete-completeness of the ideal $\I$ is essential in Theorem~\ref{main}.
\end{remark}

\begin{remark}
The paper \cite{MSak2} contains a characterization of Tychonoff spaces $X$ whose function spaces $C_p(X)$ have the Pytkeev property. This characterization implies that if for a subspace $X\subset\IR$ the function space $C_p(X)$ has the Pytkeev property, then $X$ has universal measure zero and is perfectly meager.
\end{remark}

Theorem~\ref{main} combined with Theorem~\ref{t1.6} allows us to characterize $\aleph_0$-spaces $X$ whose function spaces $C_k(X)$ have countable fan (open-)tightness.
 Let us recall that a topological space $X$ is {\em hemicompact} if there is a countable family $\mathcal K$ of compact subsets of $X$ such that each compact subset $C\subset X$ is contained in some set $K\in\mathcal K$. For first-countable Lindel\"of spaces the following characterization is proved in Corollary 3.6 of \cite{MSak}.

\begin{corollary} For an $\aleph_0$-space $X$ the following conditions are equivalent:
\begin{enumerate}
\item $C_k(X)$ has countable fan tightness;
\item $C_k(X)$ has countable fan open-tightness;
\item $C_k(X)$ is metrizable;
\item $C_k(X)$ is first countable;
\item $X$ is hemicompact.
\end{enumerate}
\end{corollary}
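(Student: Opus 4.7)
The plan is to run the cycle $(3)\Ra(4)\Ra(5)\Ra(4)$ for the ``topological'' conditions, and the cycle $(3)\Ra(1)\Ra(2)\Ra(3)$ for the ``fan tightness'' conditions, linking the two by the trivial implication $(3)\Ra(1)$ and by deducing $(5)\Ra(3)$ as a byproduct of the arguments below (it follows from $(5)\Ra(4)\Ra(1)\Ra(2)\Ra(3)$).

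The immediate implications $(3)\Ra(4)$, $(3)\Ra(1)$, and $(1)\Ra(2)$ are trivial: a metrizable space is first countable, and first countability easily implies countable fan tightness (given a decreasing neighborhood base $(V_n)_{n\in\w}$ at $f$ and sets $A_n\subset C_k(X)$ with $f\in\bigcap_{n\in\w}\bar A_n$, pick $f_n\in A_n\cap V_n$; then $F_n=\{f_n\}$ works because every neighborhood of $f$ eventually contains each $f_n$). The implication $(5)\Ra(4)$ is also standard: if $X$ is hemicompact with an increasing exhausting family $K_1\subset K_2\subset\cdots$ of compact sets, then for every $f\in C_k(X)$ the countable family $\{g\in C_k(X):\sup_{x\in K_n}|g(x)-f(x)|<1/m\}_{n,m\in\IN}$ is a neighborhood base at $f$.

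For $(2)\Ra(3)$ I would invoke Corollary~\ref{tipa2}, which gives that $C_k(X)$ is a $\Pyt_0$-space (as $X$ is an $\aleph_0$-space). Combined with countable fan open-tightness, Corollary~\ref{c:fan-open} yields that $C_k(X)$ is separable and metrizable.

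The step I expect to be the main obstacle is $(4)\Ra(5)$. Let $(V_n)_{n\in\w}$ be a countable neighborhood base at the zero function $\mathbf 0\in C_k(X)$. Shrinking each $V_n$ to a basic open neighborhood of $\mathbf 0$, I may choose compact sets $K_{n,1},\dots,K_{n,k_n}\subset X$ and open neighborhoods $U_{n,1},\dots,U_{n,k_n}$ of $0\in\IR$ with $\bigcap_{i=1}^{k_n}[K_{n,i};U_{n,i}]\subset V_n$. Set $L_n=\bigcup_{i=1}^{k_n}K_{n,i}$. I claim $\{L_n\}_{n\in\w}$ witnesses the hemicompactness of $X$. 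Being an $\aleph_0$-space, $X$ is regular and cosmic, hence Lindel\"of and therefore normal, so in particular Tychonoff. Given any compact $C\subset X$, the set $[C;(-1,1)]$ is a neighborhood of $\mathbf 0$, so $V_n\subset [C;(-1,1)]$ for some $n$. If $C\not\subset L_n$, choose $x\in C\setminus L_n$ and, by Tychonoff-ness, a continuous function $g:X\to[0,2]$ with $g(L_n)=\{0\}$ and $g(x)=2$. Then $g(K_{n,i})=\{0\}\subset U_{n,i}$ for each $i\le k_n$, so $g\in\bigcap_{i=1}^{k_n}[K_{n,i};U_{n,i}]\subset V_n\subset [C;(-1,1)]$; but $|g(x)|=2$, a contradiction. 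Hence $C\subset L_n$, completing the proof of hemicompactness and closing the cycle.
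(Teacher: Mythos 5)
Your proof is correct and follows essentially the same route as the paper: $(2)\Ra(3)$ via Corollaries~\ref{tipa2} and \ref{c:fan-open}, $(4)\Ra(5)$ by testing a countable base at $\mathbf 0$ against $[C;(-1,1)]$, and $(5)\Ra$ first countable $\Ra$ countable fan tightness, merely arranged into a slightly different implication graph. The only substantive difference is that you spell out the Tychonoff separation argument behind ``$V_n\subset[C;(-1,1)]$ implies $C\subset L_n$,'' which the paper leaves implicit.
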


\begin{proof} By Corollary~\ref{tipa2}, the function space $C_k(X)$ is a $\Pyt_0$-space. The implication $(1)\Ra(2)$ is trivial.
If $C_k(X)$ has countable fan open-tightness, then by Corollary~\ref{c:fan-open} it is metrizable. This proves the implication $(2)\Ra(3)$. The implication $(3)\Ra(1)$ is trivial. The equivalences $(3)\Leftrightarrow(4)\Leftrightarrow(5)$ are proved in the classical paper of Arens \cite{Arens}.
\end{proof}

\begin{remark} By Theorem II.2.2 \cite{Arh}, for a Tychonoff space $X$ the function space $C_p(X)\subset \IR^X$ has countable fan tightness if and only if all powers $X^n$, $n\in\IN$, have the Hurewicz property.
\end{remark}

\begin{remark}\label{MSak-Cp} By  Theorem 3.8 \cite{MSak}, for any topological space $X$ the function space $C_p(X)$ has countable fan open-tightness. More precisely, for any sequence $(U_n)_{n\in\w}$ of open subsets of $C_p(X)$ and any point $f\in\bigcap_{n\in\w}\bar U_n$ there is a sequence of functions $f_n\in U_n$, $n\in\w$ such that $f$ is a cluster point of the set $\{f_n\}_{n\in\w}$.
\end{remark}

\section{Preservation of the class of $\Pyt_0$-spaces by some topological operations}\label{s3}

In this section we shall show that the class of $\Pyt_0$-spaces is closed under many operations over topological spaces and hence is a nice new class of generalized metric spaces.

The following trivial proposition implies that the class of $\Pyt_0$-spaces is closed under taking subspaces.

\begin{proposition}\label{p3.1} If $\mathcal N$ is a Pytkeev network for a topological space $X$, then for every subspace $A\subset X$ the family $\mathcal N|A=\{N\cap A:N\in\mathcal N\}$ is a Pytkeev network for the space $A$. Consequently, subspaces of $\Pyt_0$-spaces are $\Pyt_0$-spaces.
\end{proposition}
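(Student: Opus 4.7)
The plan is to verify directly, from the two clauses defining a Pytkeev network, that both clauses survive intersection with an arbitrary subspace $A\subset X$. Fix a point $a\in A$ and an arbitrary neighborhood $V_a$ of $a$ in $A$; write $V_a=O_a\cap A$ for some open $O_a\subset X$.

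For the network clause: since $\mathcal N$ is a network at $a$ in $X$, pick $N\in\mathcal N$ with $a\in N\subset O_a$. Then $N\cap A\in\mathcal N|A$ and $a\in N\cap A\subset V_a$, so $\mathcal N|A$ is a network at $a$ in $A$. For the Pytkeev clause, let $B\subset A$ accumulate at $a$ in the subspace topology of $A$. Any open neighborhood $U$ of $a$ in $X$ satisfies $U\cap A$ is a neighborhood of $a$ in $A$, hence contains infinitely many points of $B$, which lie in $U$; therefore $B$ accumulates at $a$ in $X$ as well. Applying the Pytkeev property of $\mathcal N$ in $X$ to the set $B$ and the neighborhood $O_a$ gives $N\in\mathcal N$ with $N\subset O_a$ and $N\cap B$ infinite. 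Then $N\cap A\in\mathcal N|A$, $N\cap A\subset O_a\cap A=V_a$, and, crucially using $B\subset A$, the intersection $(N\cap A)\cap B=N\cap B$ is infinite.

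For the concluding sentence, note that regularity is hereditary to subspaces and that $|\mathcal N|A|\le|\mathcal N|$, so the restriction of a countable Pytkeev network on a regular $X$ is a countable Pytkeev network on the (automatically regular) subspace. There is no real obstacle here; the only point worth flagging is the use of the inclusion $B\subset A$ to identify $(N\cap A)\cap B$ with $N\cap B$, which is what makes the infinite intersection transfer from $X$ back down to $A$.
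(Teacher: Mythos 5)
Your proof is correct and is precisely the routine verification the paper has in mind: the paper states this proposition without proof, calling it trivial, and your direct check of the network clause and the Pytkeev clause (with the key observations that accumulation in the subspace implies accumulation in $X$ and that $(N\cap A)\cap B=N\cap B$ when $B\subset A$) is exactly the intended argument. Nothing is missing.
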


Let us recall that the {\em topological sum} $\coprod_{\alpha\in A}X_\alpha$ of an indexed family $(X_\alpha)_{\alpha\in A}$ of topological spaces is the union $\bigcup_{\alpha\in A}A_\alpha\times\{\alpha\}$ endowed with the largest topology making every embedding $i_\alpha X_\alpha\to\coprod_{\alpha\in A}X_\alpha$, $i_\alpha:x\mapsto (x,\alpha)$, continuous.

\begin{proposition}\label{p3.2n} Let $(X_\alpha)_{\alpha\in A}$ be an indexed family of topological spaces with countable Pytkeev network. If $|A|\le\aleph_0$, then the topological sum $\coprod_{\alpha\in A}X_\alpha$ has a countable Pytkeev network.
\end{proposition}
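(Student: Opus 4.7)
The plan is to form the obvious candidate network and exploit the fact that each summand $X_\alpha\times\{\alpha\}$ is clopen in the topological sum $S:=\coprod_{\alpha\in A}X_\alpha$. For each $\alpha\in A$ fix a countable Pytkeev network $\mathcal N_\alpha$ for $X_\alpha$ and put
$$\mathcal N=\bigcup_{\alpha\in A}\{N\times\{\alpha\}:N\in\mathcal N_\alpha\}.$$
Since both $A$ and each $\mathcal N_\alpha$ are countable, $\mathcal N$ is countable. I will verify the defining properties of a Pytkeev network directly.

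Fix a point $x=(x_0,\alpha_0)\in S$, a neighborhood $O_x\subset S$ of $x$, and a subset $B\subset S$ accumulating at $x$. The clopen set $X_{\alpha_0}\times\{\alpha_0\}$ is a neighborhood of $x$ in $S$, so by the accumulation assumption the set $B':=B\cap(X_{\alpha_0}\times\{\alpha_0\})$ is infinite. More strongly, if $U$ is any open neighborhood of $x_0$ in $X_{\alpha_0}$ then $U\times\{\alpha_0\}$ is open in $S$ and contains infinitely many points of $B$; all of these lie automatically in $X_{\alpha_0}\times\{\alpha_0\}$ and hence in $B'$. Under the canonical identification of $X_{\alpha_0}$ with $X_{\alpha_0}\times\{\alpha_0\}$ this says that $B'$ accumulates at $x_0$ in $X_{\alpha_0}$.

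Now set $U:=O_x\cap(X_{\alpha_0}\times\{\alpha_0\})$, viewed as an open neighborhood of $x_0$ in $X_{\alpha_0}$. Applying the Pytkeev network property of $\mathcal N_{\alpha_0}$ at $x_0$ to the neighborhood $U$ and the set $B'$ yields $N\in\mathcal N_{\alpha_0}$ with $N\subset U$ and $N\cap B'$ infinite. Then $N\times\{\alpha_0\}\in\mathcal N$, $N\times\{\alpha_0\}\subset O_x$, and $(N\times\{\alpha_0\})\cap B\supset(N\times\{\alpha_0\})\cap B'$ is infinite, as required. The same reduction (omitting the set $B$) shows that $\mathcal N$ is a network at $x$: any $N\in\mathcal N_{\alpha_0}$ with $x_0\in N\subset U$ gives $x\in N\times\{\alpha_0\}\subset O_x$.

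I do not expect any real obstacle here; the only conceptual point is the reduction from accumulation in $S$ to accumulation in the relevant summand, which is immediate from the clopenness of $X_{\alpha_0}\times\{\alpha_0\}$. Countability of the index set $A$ is used only to ensure that the countable union $\mathcal N$ remains countable.
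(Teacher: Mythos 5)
Your proposal is correct and uses exactly the same candidate family $\mathcal N=\bigcup_{\alpha\in A}\{N\times\{\alpha\}:N\in\mathcal N_\alpha\}$ as the paper, which simply asserts that "it is easy to see" this is a Pytkeev network. You have filled in the routine verification (reducing accumulation in the sum to accumulation in the clopen summand), so the argument matches the paper's.
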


\begin{proof} For every $\alpha\in A$ fix a countable Pytkeev network $\mathcal N_\alpha$ for the space $X_\alpha$. It is easy to see that the countable family
$$\mathcal N=\bigcup_{\alpha\in A}\{N\times\{\alpha\}:N\in \mathcal N_\alpha\}$$is a Pytkeev network for the topological sum $\coprod_{\alpha\in A}X_\alpha$.
\end{proof}

More generally, we show that the class of $\Pyt_0$-spaces is closed under taking inductive topologies.
We shall say that a topological space $X$ {\em carries the inductive topology with respect to a family}  $\mathcal C$ of its subspaces if a subset $U\subset X$ is open in $X$ if and only if for every $C\in\mathcal C$ the intersection $C\cap U$ is open in the subspace topology of $C$.
Observe that a topological space is a $k$-space if and only if it carries the inductive topology with respect to a cover by compact subsets. A topological space is a {\em $k_\w$-space} if it carries the inductive topology with respect to some countable cover by compact subsets.

\begin{theorem}\label{t3.2} A $T_1$-space $X$ has a countable Pytkeev network if $X$ carries the inductive topology with respect to a countable cover $\C$ of $X$ by subspaces possessing countable Pytkeev networks.
\end{theorem}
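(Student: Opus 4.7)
The plan is to take $\mathcal{N}:=\bigcup_{n\in\omega}\mathcal{N}_n$, where each $\mathcal{N}_n$ is a countable Pytkeev network on the subspace $C_n\in\mathcal{C}$, viewed as a family of subsets of $X$. This is a countable family. The network property at each $x\in X$ is immediate: pick $n$ with $x\in C_n$; since $C_n$ carries its subspace topology from $X$, any $X$-open neighborhood $O_x$ restricts to a $C_n$-open neighborhood, and $\mathcal{N}_n$ supplies some $N$ with $x\in N\subset O_x\cap C_n\subset O_x$.

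For the Pytkeev property, fix $x\in X$, an open $O_x\ni x$, and $A\subset X$ accumulating at $x$. The substantive step is the following \emph{reduction lemma}: there must exist $n$ with $x\in C_n$ such that $A\cap C_n$ accumulates at $x$ in $C_n$ (subspace topology). Granted this, the Pytkeev property of $\mathcal{N}_n$ applied to the $C_n$-neighborhood $O_x\cap C_n$ and the accumulating set $A\cap C_n$ produces an element $N\in\mathcal{N}_n\subset\mathcal{N}$ with $N\subset O_x$ and $N\cap A\supset N\cap(A\cap C_n)$ infinite, as required.

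To prove the reduction lemma I argue by contradiction: suppose $A\cap C_n$ fails to accumulate at $x$ in $C_n$ for every $n$ with $x\in C_n$. Then for each such $n$ there is an $X$-open $W_n\ni x$ with $W_n\cap A\cap C_n$ finite, and the $T_1$ hypothesis (finite sets are closed in $X$) lets me shrink each $W_n$ so that $W_n\cap A\cap C_n\subset\{x\}$. The plan is to assemble these $W_n$'s, together with suitable local choices on the $C_m$ with $x\notin C_m$, into a single $X$-open neighborhood $U$ of $x$ with $U\cap(A\setminus\{x\})=\emptyset$, contradicting the accumulation of $A$ at $x$. The main obstacle will be precisely this assembly: a countable intersection of $X$-open sets need not be open, so instead I would exploit the defining property of the inductive topology --- $U\subset X$ is open iff $U\cap C_m$ is open in $C_m$ for every $m\in\omega$ --- and prescribe the trace $U\cap C_m$ on each $C_m$ along the enumeration of $\mathcal{C}$. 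A diagonal construction, using $T_1$-closedness to excise any stray finite intersections with $A$ on each $C_m$ while guaranteeing consistency of the specifications on the overlaps $C_n\cap C_m$, is expected to yield the desired $U$ and complete the proof.
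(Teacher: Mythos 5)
Your plan founders on the ``reduction lemma'': it is false that some $C_n\ni x$ must satisfy that $A\cap C_n$ accumulates at $x$. Take $X$ to be the Arens space $S_2$, with points $x$, $x_n$ ($n\in\omega$) and $x_{n,m}$ ($n,m\in\omega$), carrying the inductive topology with respect to the countable cover by the convergent sequences $C_0=\{x\}\cup\{x_n:n\in\omega\}$ and $C_{n+1}=\{x_n\}\cup\{x_{n,m}:m\in\omega\}$; each $C_i$ is compact metrizable, hence has a countable Pytkeev network, and $X$ is $T_1$ (even regular). The set $A=\{x_{n,m}:n,m\in\omega\}$ accumulates at $x$, yet $A\cap C_0=\emptyset$ and $x\notin C_{n+1}$ for every $n$, so no member of the cover witnesses your lemma. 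Accordingly, the diagonal construction you sketch cannot succeed either: in this example every open neighborhood of $x$ meets $A$ in an infinite set, so the open set $U$ with $U\cap(A\setminus\{x\})=\emptyset$ that you are trying to assemble simply does not exist. The difficulty is not in organizing the traces consistently; the target object is nonexistent.

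The idea you are missing is that a Pytkeev network is not required to contain the point $x$ in the witnessing set $N$ --- only $N\subset O_x$ and $|N\cap A|=\infty$ are demanded (that is the non-strict version used in the theorem). This freedom lets one relocate to a \emph{different} accumulation point inside $O_x$. The paper's proof replaces $A$ by $(A\setminus\{x\})\cup(X\setminus O_x)$; this set still has $x$ in its closure but does not contain $x$, hence is not closed in $X$, so by the definition of the inductive topology its trace on some $C\in\mathcal{C}$ is not closed in $C$. Using $T_1$, that trace then has an accumulation point $c\in C\setminus A\subset C\cap O_x$ (possibly $c\ne x$), and applying the Pytkeev property of $\mathcal{N}_C$ at $c$ with the $C$-neighborhood $C\cap O_x$ produces $N\in\mathcal{N}_C$ with $N\subset O_x$ and $N\cap A$ infinite (the points of $X\setminus O_x$ that were adjoined cannot contribute, since $N\subset O_x$). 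In the Arens example this is exactly what happens: the witness $N$ sits near some $x_n\in O_x$ rather than near $x$ itself. Your choice of the family $\mathcal{N}=\bigcup_n\mathcal{N}_n$ and the verification of the network property are the same as the paper's; it is only the accumulation step that needs to be rerouted through this relocation argument.
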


\begin{proof} For every space $C\in\C$ fix a countable Pytkeev network $\mathcal N_C$.  We claim that the countable family $\mathcal N=\bigcup_{C\in\C}\mathcal N_C$ is a Pytkeev network for the space $X$. First observe that $\mathcal N$ is a network for $X$.

Given an open set $U\subset X$, and a subset $A\subset X$ accumulating at a point $x\in U$, we should find a set in the family $\mathcal N_U=\{N\in\mathcal N:N\subset U\}$ that contains infinitely many points of $A$.   Replacing the set $A$ by the set  $(A\setminus\{x\})\cup (X\setminus U)$, we can additionally assume that $x\notin A$ and $X\setminus U\subset A$. Since the set $A$ is not closed in $X$, for some $C\in\C$ the intersection $A\cap C$ is not closed in $C$ and hence has some accumulation point $c\in C\setminus A\subset C\cap U$.
The family $\mathcal N_C$, being a Pytkeev network for the space $C$, contains a set $N\in\mathcal N_C\subset\mathcal N$ such that $N\subset U\cap C$ and $N\cap (A\cap C)\subset N\cap A$ is infinite.
\end{proof}

A topological space $X$ is {\em submetrizable} if it admits a continuous metric. Observe that each submetrizable $k_\w$-space carries inductive topology with respect to a countable cover by compact metrizable subspaces. Combining this fact with Theorem~\ref{t3.2} we get:

\begin{corollary}\label{koPi} Each submetrizable $k_\w$-space is a $\Pyt_0$-space.
\end{corollary}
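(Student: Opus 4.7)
The plan is to verify the observation preceding the corollary, namely that a submetrizable $k_\w$-space carries the inductive topology with respect to a countable cover by compact \emph{metrizable} subspaces, and then to invoke Theorem~\ref{t3.2}.

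First I would fix a continuous metric $d\colon X\times X\to\IR$ witnessing submetrizability together with a countable family $\C=\{K_n:n\in\w\}$ of compact subsets of $X$ generating the inductive topology on $X$. The continuity of $d$ forces $X$ to be Hausdorff, hence $T_1$. For each $n\in\w$ the restriction $d_n=d\!\upharpoonright\!(K_n\times K_n)$ is a continuous metric on the compact subspace $K_n$, so the identity map from $K_n$ (with its subspace topology inherited from $X$) onto the metric space $(K_n,d_n)$ is a continuous bijection from a compact space onto a Hausdorff space, and hence a homeomorphism. Consequently every $K_n$ is compact metrizable, in particular separable metrizable. Any countable base of such a $K_n$ is trivially a countable (strict) Pytkeev network, so each $K_n$ carries a countable Pytkeev network.

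Since $X$ is a $T_1$-space carrying the inductive topology with respect to the countable cover $\C$ by subspaces possessing countable Pytkeev networks, Theorem~\ref{t3.2} immediately produces a countable Pytkeev network for $X$. To upgrade this to the full $\Pyt_0$ property I still need $X$ to be regular; this follows from the classical fact that every Hausdorff $k_\w$-space is normal (being realized as a countable direct limit of compact Hausdorff spaces, a pushout construction in the category of Tychonoff spaces). Combining regularity with the countable Pytkeev network yields that $X$ is a $\Pyt_0$-space.

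The argument is essentially routine once the correct decomposition is written down. The only mildly delicate point I foresee is the appeal to the normality of Hausdorff $k_\w$-spaces in order to secure regularity, since Theorem~\ref{t3.2} by itself yields only the countable network and not the separation axiom built into the definition of a $\Pyt_0$-space.
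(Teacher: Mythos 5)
Your proof is correct and follows essentially the same route as the paper: observe that a submetrizable $k_\w$-space is a $T_1$-space carrying the inductive topology with respect to a countable cover by compact metrizable subspaces (each of which has a countable Pytkeev network, e.g.\ a countable base), and apply Theorem~\ref{t3.2}. Your extra care about regularity (via normality of Hausdorff $k_\w$-spaces) fills in a separation-axiom detail that the paper leaves implicit, and is a welcome addition rather than a deviation.
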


Next, we show that the class of $\Pyt_0$-spaces is preserved by countable Tychonoff products.

\begin{theorem}\label{t3.4} Let $(X_\alpha)_{\alpha\in A}$ be an indexed family of topological spaces with countable Pytkeev network. If $|A|\le\aleph_0$, then the Tychonoff product $\prod_{\alpha\in A}X_\alpha$ has a countable Pytkeev network.
\end{theorem}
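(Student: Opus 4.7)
My plan is to reduce the countable Tychonoff product to a function space of the form $C_k(A,Y)$ and then invoke Theorem~\ref{main}, rather than to cook up a Pytkeev network directly from cylinder sets $\prod_{i\le k}N_i\times\prod_{\alpha>k}X_\alpha$ with $N_i\in\mathcal N_i$.

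The three steps are as follows. First, form the topological sum $Y=\coprod_{\alpha\in A}X_\alpha$; since $A$ is countable and each $X_\alpha$ has a countable Pytkeev network, Proposition~\ref{p3.2n} equips $Y$ with one. Second, view $A$ as a discrete topological space, so that $A$ is a countable Hausdorff space whose singletons form a countable $k$-network, and whose ideal $\I$ of compact (equivalently, finite) subsets is trivially discretely-complete. Comparing subbases, the compact-open topology on $C(A,Y)$ coincides with the product topology on $Y^A$, so Theorem~\ref{main} delivers a countable Pytkeev network on $Y^A=C_\I(A,Y)$. Third, the natural inclusions $X_\alpha\hookrightarrow Y$ embed $\prod_{\alpha\in A}X_\alpha$ as a (closed) subspace of $Y^A$; since each $X_\alpha$ is clopen in $Y$, this subspace is in fact closed, though only the subspace property is needed. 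Proposition~\ref{p3.1} then transfers a countable Pytkeev network from $Y^A$ to $\prod_{\alpha\in A}X_\alpha$.

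The main obstacle is really spotting this reduction: once $C_k(A,Y)$ is identified with $Y^A$ for discrete $A$ and the disjoint sum of the $X_\alpha$ is chosen as a uniform target, the theorem drops out by pasting together three facts already in the paper. A direct cylinder-based argument would be substantially harder, because an accumulating set $B\subset\prod X_\alpha$ can have strongly correlated coordinates (for instance the diagonal $\{(1/n,1/n):n\in\IN\}\subset\IR^2$), so choices of $N_i\in\mathcal N_i$ made independently in each factor need not interlock to capture infinitely many points of $B$. The function-space detour bypasses this combinatorial difficulty, since the hard work has already been done inside the proof of Theorem~\ref{main}.
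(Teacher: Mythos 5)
Your proposal is correct and is essentially identical to the paper's own proof: the author likewise forms the topological sum via Proposition~\ref{p3.2n}, identifies $C_k(A,Y)$ for discrete countable $A$ with the Tychonoff power $Y^A$, applies Theorem~\ref{main}, and passes to the subspace $\prod_{\alpha\in A}X_\alpha$. No further comment is needed.
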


\begin{proof} By Proposition~\ref{p3.2n}, the topological sum $X=\coprod_{\alpha\in A}X_\alpha$ of the spaces $X_\alpha$, $\alpha\in A$, has a countable Pytkeev network. Observe that the set $A$ endowed with the discrete topology is an $\aleph_0$-space. By Theorem~\ref{main}, the function space $C_k(A,X)$ has countable Pytkeev network. Since the function space $C_k(A,X)$ is homeomorphic to the Tychonoff power $X^A$, we conclude that $X^A$ as well as its subspace $\prod_{\alpha\in A}X_\alpha$ have countable Pytkeev networks.
\end{proof}

Let us recall that the {\em box product} $\square_{\alpha\in A}X_\alpha$ of topological spaces $X_\alpha$, $\alpha\in A$, is the Cartesian product $\prod_{\alpha\in A}X_\alpha$ endowed with the topology generated by the base consisting of the products $\prod_{\alpha\in A}U_\alpha$ of open sets $U_\alpha\subset X_\alpha$.

A {\em pointed topological space} is a topological space $X$ with a distinguished point $*_X$.

By the {\em small box-product} of an indexed family $(X_\alpha)_{\alpha\in A}$ of pointed topological spaces we understand the subspace
$$\cbox_{\alpha\in A}X_\alpha=\big\{(x_\alpha)_{\alpha\in A}\in\square_{\alpha\in A}X_\alpha:\mbox{the set $\{\alpha\in A:x_\alpha\ne *_{X_\alpha}\}$ is finite}\big\}$$
of the box-product $\square_{\alpha \in A}X_\alpha$.

\begin{theorem}\label{t3.5} If pointed $T_1$-spaces $X_n$, $n\in\w$, have countable Pytkeev networks, then their small box-product $\cbox_{n\in\w}X_n$ has a countable Pytkeev network too.
\end{theorem}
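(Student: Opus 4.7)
My plan is to assemble a countable Pytkeev network on $Z:=\cbox_{n\in\w}X_n$ from the Pytkeev networks on the finite sub-products. For each finite $F\subset\w$ set $Y_F:=\prod_{n\in F}X_n\times\prod_{n\notin F}\{*_{X_n}\}\subset Z$; as a subspace of $Z$, $Y_F$ carries exactly the product topology of $\prod_{n\in F}X_n$, since a basic box $\prod_n V_n$ of $Z$ intersected with $Y_F$ becomes either empty (if $*_{X_n}\notin V_n$ for some $n\notin F$) or the basic product-open set $\prod_{n\in F}V_n$ of the finite product. Thus Theorem~\ref{t3.4} supplies a countable Pytkeev network $\mathcal M_F$ for $Y_F$, and I take as candidate the countable family $\mathcal M:=\bigcup_{F\text{ finite}}\mathcal M_F$.

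The network property falls out of the fact that each $Y_F$ is closed in $Z$: its complement $\bigcup_{n\notin F}\pi_n^{-1}(X_n\setminus\{*_{X_n}\})$ is open because every $X_n$ is $T_1$. So for $x\in U$ open in $Z$, writing $F_0:=\{n:x_n\ne *_{X_n}\}$, the set $U\cap Y_{F_0}$ is an open neighbourhood of $x$ in $Y_{F_0}$, and the network property of $\mathcal M_{F_0}$ gives a set $M\in\mathcal M_{F_0}\subset\mathcal M$ with $x\in M\subset U$. The Pytkeev property then reduces to the following key claim: \emph{if $A\subset Z$ accumulates at $x$, then $A\cap Y_F$ accumulates at $x$ in $Y_F$ for some finite $F\supset F_0$}. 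Granted this, the Pytkeev property of $\mathcal M_F$ in $Y_F$, applied to the open set $O_x\cap Y_F$ and the accumulating set $A\cap Y_F$, yields $M\in\mathcal M_F$ with $M\subset O_x$ and $|M\cap A|=|M\cap A\cap Y_F|=\infty$.

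To prove the key claim I would argue by contradiction. Assuming the claim fails and (replacing $A$ by $A\setminus\{x\}$) that $x\notin A$, for each finite $F\supset F_0$ I pick a basic box neighbourhood $V_F=\prod_nU^F_n$ of $x$ with $V_F\cap A\cap Y_F=\emptyset$: by hypothesis the intersection is finite and disjoint from $x$, so it can be peeled off by $T_1$-shrinking the finitely many $U^F_n$ with $n\in F$. For each $a\in A$, setting $F(a):=\mathrm{supp}(a)\cup F_0$, the inclusion $a\in Y_{F(a)}\setminus V_{F(a)}$ forces a witness coordinate $n_a\in F(a)$ with $a(n_a)\notin U^{F(a)}_{n_a}$, hence $a(n_a)\ne x_{n_a}$; the witness must lie in $F(a)$ because for $n\notin F(a)$ the value $a(n)=*_{X_n}=x_n$ already sits in $U^{F(a)}_n$. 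The goal is a box $V:=\prod_nW_n\cap Z$ around $x$ with $V\cap A=\emptyset$: it suffices to find, for each $n$, an open $W_n\ni x_n$ in $X_n$ disjoint from $B_n:=\{a(n_a):n_a=n\}$, for then every $a\in A$ satisfies $a(n_a)\notin W_{n_a}$ and so $a\notin V$. By $T_1$-ness such $W_n$ exists at every coordinate where $x_n$ fails to be an accumulation point of $B_n$ in $X_n$.

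Blocking the accumulation pathology at the remaining ``trouble'' coordinates is the main obstacle, and I would resolve it by using the Pytkeev network $\mathcal N_n$ on $X_n$ in tandem with two degrees of freedom: the inductive choice of the $V_F$'s along an exhaustion $F_0\subset F_1\subset F_2\subset\cdots$ of $\w$, and the flexibility to re-assign $n_a$ to any coordinate of $F(a)$ at which $a$ escapes $V_{F(a)}$. Concretely, whenever the Pytkeev property of $\mathcal N_n$ detects accumulation of the tentative $B_n$ at $x_n$ via some $N\in\mathcal N_n$, I either shrink $U^{F_k}_n$ inside $X_n\setminus N$, so that the witness values $a(n)\in N$ are excluded and the offending $a$'s must find another witness in $F(a)$, or re-route those $a$'s directly to a different bad coordinate of $F(a)$ so that $n$ leaves the trouble set. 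A diagonalisation over $k$ and $n$, exploiting the countability of each $\mathcal N_n$ and the finite cardinality of $F(a)$, produces the required box $V$, contradicting the accumulation of $A$ at $x$ and completing the proof of the key claim.
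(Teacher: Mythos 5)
Your reduction to the finite subproducts $Y_F$ is correctly set up (each $Y_F$ is indeed closed in $Z$ and carries the product topology, and the network--at--a--point property does follow), but the whole argument stands or falls with your key claim, and the key claim is \emph{false}. Take every $X_n$ to be the convergent sequence $S=\{0\}\cup\{1/k:k\in\IN\}$ with distinguished point $0$, and in $Z=\cbox_{n\in\w}X_n$ consider $A=\{a_{m,j}:m,j\in\IN\}$, where $a_{m,j}(0)=1/m$, $a_{m,j}(m)=1/j$ and $a_{m,j}(n)=0$ for $n\notin\{0,m\}$. Given a basic box neighbourhood $\prod_nV_n$ of $x=\mathbf 0$, choose $s_n$ with $\{0\}\cup\{1/k:k\ge s_n\}\subset V_n$; then $a_{m,j}\in\prod_nV_n$ whenever $m\ge s_0$ and $j\ge s_m$, so $A$ accumulates at $\mathbf 0$. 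On the other hand, for any finite $F$ with $M=\max F$, every point of $A\cap Y_F$ has $0$-th coordinate $1/m$ with $m\le M$, so the neighbourhood $\pi_0^{-1}\big(\{0\}\cup\{1/k:k>M\}\big)$ of $\mathbf 0$ misses $A\cap Y_F$ entirely; hence no $A\cap Y_F$ accumulates at $\mathbf 0$. So the reduction cannot be carried out, and your final paragraph never actually produces the box $V$ --- it only describes a diagonalisation you hope exists. The obstacle you name there (``blocking the accumulation pathology at the remaining trouble coordinates'') is not a technical nuisance but precisely the point where the statement breaks.

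Note that in this example your candidate family would still happen to contain a witness, because $A\cap Y_{\{0,m\}}$ accumulates at the point $(1/m,0,0,\dots)\in O_x$ rather than at $x$ itself; this is exactly the mechanism of Theorem~\ref{t3.2}, which only needs $A\cap C$ to be non-closed in $C$, not to accumulate at $x$. But that mechanism presupposes that the topology of $Z$ is inductive with respect to the cover $\{Y_F\}$, which holds for locally compact factors (by a tube-lemma argument) but not in general, so it cannot simply be substituted for your key claim. The paper sidesteps the issue entirely: it forms the bouquet $X$ of the pointed spaces $X_n$ (whose topology \emph{is} inductive with respect to the $X_n$, so Theorem~\ref{t3.2} applies), identifies $\cbox_{n\in\w}X_n$ with a subspace of $C_k(S,X)$ for the convergent sequence $S=\{0\}\cup\{2^{-n}:n\in\w\}$, and invokes Theorem~\ref{main}. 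The resulting Pytkeev network contains sets of the form $[K;P]$ with $K$ an infinite compact subset of $S$, and such sets are not contained in any finite subproduct $Y_F$ --- flexibility that your family, by construction, denies itself. To repair your approach you would at minimum have to enlarge the candidate family beyond $\bigcup_F\mathcal M_F$ and prove a localization statement that survives examples like the one above.
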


\begin{proof} In the small box-product $\cbox_{n\in\w}X_n$ consider the subspace
$$X=\{(x_n)_{n\in\w}\in\cbox_{n\in\w}X_n:|\{n\in\w:x_n\ne *_{X_n}\}|\le1\}$$
which can be thought as the bouquet of the pointed spaces $X_n$, $n\in\w$.
Here we identify every space $X_k$ with the  subspace $\{(x_n)_{n\in\w}\in \cbox_{n\in\w}X_n:\forall n\in\w\setminus\{k\}\;\;x_n=*_{X_n}\}$ of $\cbox_{n\in\w}X_n$.
The point $*_X=(*_{X_n})_{n\in\w}$ is the distinguished point of the space $X$.

It is easy to check that the subspace topology on $X$ is inductive with respect to the cover $\{X_n\}_{n\in\w}$. By Theorem~\ref{t3.2}, the $T_1$-space $X$ has a countable Pytkeev network. Consider the compact subspace $S=\{0\}\cup\{2^{-n}:n\in\w\}$ of the real line and observe that the small box-product $\cbox_{n\in\w}X_n$ is homeomorphic to the subspace
$$\{f\in C_k(S,X):f(0)=*_X\mbox{ \ and \ }\forall n\in\w\;\;f(2^{-n})\in X_n\}$$of the function space $C_k(S,X)$, which has a countable Pytkeev network according to Theorem~\ref{main}. Then the small box-product $\cbox_{n\in\w}X_n$ also has a countable Pytkeev network, being homeomorphic to a subspace of $C_k(S,X)$.
\end{proof}

For a topological space $X$ by $\exp(X)$ we shall denote the space of non-empty compact subsets endowed with the Vietoris topology generated by the base consisting of the sets $$\langle U_1,\dots,U_n\rangle=\{K\in\exp(X):K\subset U_1\cup\dots\cup U_n\mbox{ \ and \ }K\cap U_i\ne\emptyset\mbox{ for all $i\le n$}\}$$where $U_1,\dots,U_n$ run over non-empty open sets in $X$.

\begin{theorem}\label{t3.6} If a Hausdorff topological space $X$ has a countable Pytkeev network, then its hyperspace $\exp(X)$ has a countable Pytkeev network too.
\end{theorem}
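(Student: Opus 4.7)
My plan is to construct a countable Pytkeev network on $\exp(X)$ from a countable Pytkeev network $\mathcal{N}$ of $X$, modelling the argument on Theorem~\ref{main}. Replacing $\mathcal{N}$ by its (still countable) closure under finite unions and finite intersections, I assume $\mathcal{N}$ is closed under both operations. Set
$$\mathcal{M}=\{\langle\!\langle N_1,\dots,N_k\rangle\!\rangle:k\in\mathbb{N},\ N_i\in\mathcal{N}\},$$
where $\langle\!\langle N_1,\dots,N_k\rangle\!\rangle=\{K\in\exp(X):K\subset N_1\cup\cdots\cup N_k$ and $K\cap N_i\ne\emptyset$ for each $i\}$. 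This family is countable, and the network property at $\mathcal{K}\in\exp(X)$ inside a basic Vietoris neighborhood $O_{\mathcal{K}}=\langle U_1,\dots,U_m\rangle$ is routine via Proposition~\ref{p1.2}: pick $N_0\in\mathcal{N}$ with $\mathcal{K}\subset N_0\subset\bigcup_l U_l$ and, for each $l$, a point $y_l\in\mathcal{K}\cap U_l$ sitting in some $N_l\in\mathcal{N}$ with $N_l\subset U_l$; then $\mathcal{K}\in\langle\!\langle N_0,N_0\cap N_1,\dots,N_0\cap N_m\rangle\!\rangle\subset O_{\mathcal{K}}$.

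For the Pytkeev property, let $\mathcal{A}$ accumulate at $\mathcal{K}$; we may assume $\mathcal{A}\subset O_{\mathcal{K}}$. Using regularity of $X$, I first refine: cover $\mathcal{K}$ by finitely many open $W_1,\dots,W_s$ with $\bar W_t\subset U_{l_t}$, arranged so that for each $l\in\{1,\dots,m\}$ some $W_t$ with $l_t=l$ meets $\mathcal{K}$; we may also assume $\mathcal{A}\subset\langle W_1,\dots,W_s\rangle$. Paralleling Theorem~\ref{main}, I enumerate $\{N\in\mathcal{N}:N\subset\bigcup_l U_l\}=\{N'_{0,j}\}_j$ and $\{N\in\mathcal{N}:N\subset U_l\}=\{N'_{l,j}\}_j$, form the increasing unions $N_{0,j}=\bigcup_{k\le j}N'_{0,k}$ and $N_{l,j}=\bigcup_{k\le j}N'_{l,k}$, and set
$$\mathcal{F}_j=\{K\in\exp(X):K\subset N_{0,j}\text{ and }K\cap N_{l,j}\ne\emptyset\text{ for }l=1,\dots,m\}.$$
One checks $\mathcal{F}_j\in\mathcal{M}$ (rewrite as $\langle\!\langle N_{0,j},N_{0,j}\cap N_{1,j},\dots,N_{0,j}\cap N_{m,j}\rangle\!\rangle$), $\mathcal{F}_j\subset O_{\mathcal{K}}$, and $\bigcup_j\mathcal{F}_j=O_{\mathcal{K}}$. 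Assume for contradiction that each $\mathcal{F}_j\cap\mathcal{A}$ is finite; then $\mathcal{A}$ is countable and for every $A\in\mathcal{A}\setminus\mathcal{F}_0$ there is a canonical $j_A$ with $A\in\mathcal{F}_{j_A+1}\setminus\mathcal{F}_{j_A}$. The failure $A\notin\mathcal{F}_{j_A}$ supplies an index $l_A\in\{1,\dots,m\}$, a type $\tau_A\in\{\mathrm{cont},\mathrm{hit}\}$, and a witness $x_A\in A\cap W_{l_A}$ with $x_A\notin N_{0,j_A}$ (if $A\not\subset N_{0,j_A}$) or $x_A\notin N_{l_A,j_A}$ (if $A\cap N_{l_A,j_A}=\emptyset$). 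Pigeonholing over the $2m$ cells $(l_A,\tau_A)$ yields an infinite sub-family $\mathcal{A}^*$ with witness set $B^*=\{x_A:A\in\mathcal{A}^*\}$.

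The key observation is that the Pytkeev property of $\mathcal{N}$ rules out any accumulation of $B^*$ in $U_{l^*}$ (type hit) or in $\bigcup_l U_l$ (type cont): if $y$ were such, $\mathcal{N}$ would contain $N$ inside the relevant open set with $N\cap B^*$ infinite, hence $N=N'_{l^*,j^*}$ (resp. $N'_{0,j^*}$) for some $j^*$; but then any $A\in\mathcal{A}^*$ with $x_A\in N$ and $j_A\ge j^*$ would satisfy $x_A\in N\subset N_{l^*,j_A}$ (resp. $N_{0,j_A}$), contradicting the choice of $x_A$, so $\{A\in\mathcal{A}^*:x_A\in N\}\subset\mathcal{A}\cap\mathcal{F}_{j^*}$ is finite, contradicting $|N\cap B^*|=\infty$. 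Because the refinement forces $B^*\subset W_{l^*}\subset\bar W_{l^*}\subset U_{l^*}$, every $X$-cluster point of $B^*$ lies in $\bar W_{l^*}\subset U_{l^*}$; combined with the preceding absence of accumulation, $B^*$ is closed and discrete in $X$, so $\mathcal{K}\cap B^*$ is finite.

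Running this for each of the at most $2m$ non-empty cells, I obtain finitely many closed-discrete sets $B^*\subset X$. Regularity of $X$ separates the compact $\mathcal{K}$ from each closed discrete $B^*\setminus\mathcal{K}$ by disjoint open sets; intersecting their $\mathcal{K}$-sides and restricting to $\bigcup_l U_l$ produces an open $V\subset\bigcup_l U_l$ with $\mathcal{K}\subset V$ and $V\cap B^*$ finite for every cell. Then $\mathcal{V}=\{K\in\exp(X):K\subset V$ and $K\cap W_t\ne\emptyset$ for each $t\}$ is a Vietoris-open neighborhood of $\mathcal{K}$ inside $O_{\mathcal{K}}$, and exactly as in Theorem~\ref{main}, $\mathcal{V}\cap\mathcal{A}$ is finite: each of the finitely many $y\in V\cap B^*$ lies in some $N_{l,j^*_y}$, so any $A\in\mathcal{A}^*$ with $x_A=y$ must have $j_A<j^*_y$ and hence belong to the finite set $\mathcal{F}_{j^*_y}\cap\mathcal{A}$. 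This contradicts $\mathcal{K}$ being an accumulation point of $\mathcal{A}$. The step I expect to be the main obstacle is upgrading `$B^*$ has no accumulation in $U_{l^*}$' to `$B^*$ is closed-discrete in $X$'; the regularity-based refinement $\{W_t\}$ of $\{U_l\}$ with $\bar W_t\subset U_{l_t}$ is precisely what prevents $B^*$ from clustering at points of $\mathcal{K}\cap\partial U_{l^*}$ and makes the final separation step possible.
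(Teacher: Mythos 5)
Your overall architecture --- the increasing exhaustion $\mathcal F_j$ of the basic Vietoris neighborhood, the witness points $x_A$ for the failures $A\notin\mathcal F_{j_A}$, the use of the Pytkeev property of $\mathcal N$ to forbid accumulation of the witness sets, and the final small neighborhood meeting only finitely many members of $\mathcal A$ --- is the paper's. The genuine problem is that you invoke regularity of $X$, which the theorem does not assume: the hypothesis is only that $X$ is Hausdorff with a countable Pytkeev network, and such a space need not be regular. You use regularity twice: first to shrink the cover, producing open sets $W_t$ with $\bar W_t\subset U_{l_t}$, and second to separate the compact $\mathcal K$ from the closed discrete sets $B^*\setminus\mathcal K$. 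The second use is harmless (since each $B^*\setminus\mathcal K$ is closed you could simply take $V=(\bigcup_l U_l)\setminus\bigcup(B^*\setminus\mathcal K)$), but the first is load-bearing and cannot be deleted: without $\bar W_{t}\subset U_{l^*}$ you have no control over cluster points of $B^*$ lying on the boundary of $U_{l^*}$, where the Pytkeev property only hands you sets $N$ contained in neighborhoods of a point $y\notin U_{l^*}$, so $N$ need not belong to the enumerated family $\{N'_{l^*,j}\}_j$ and the contradiction evaporates. The closed-discreteness of $B^*$, and with it the finiteness of $\mathcal K\cap B^*$, then collapses. You identified this yourself as the main obstacle; the fix you chose imports a hypothesis that is not there.

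The paper resolves exactly this point using no separation axiom on $X$ beyond Hausdorffness, by two devices worth comparing with yours. First, instead of shrinking the sets $U_i$ inside $X$, it decomposes $K=\bigcup_{i=1}^m K_i$ with each $K_i$ closed in $K$ and $K_i\subset U_i$, using only normality of the compact Hausdorff space $K$ itself. Second --- the key trick --- it does not try to make the witness set closed and discrete in all of $X$; it adjoins the complement of the relevant open set to it, putting $B_i=\{x_\alpha:i_\alpha=i\}\cup(X\setminus U_i)$ and $B_0=\{x_\alpha:i_\alpha=0\}\cup(X\setminus\bigcup_i U_i)$. Any accumulation point of $B_i$ outside $B_i$ then automatically lies in $U_i$, where the Pytkeev argument applies, so $B_i$ is closed outright; the final neighborhood is $\langle V_1,\dots,V_m\rangle$ with $V_i=U_i\setminus(B_i\cup B_0)$, which is open for free. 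If you replace your shrinking-plus-separation steps by this complement-adjoining device, the rest of your argument goes through under the stated hypotheses; as written it only establishes the theorem for regular $X$ (which suffices for the corollary about $\Pyt_0$-spaces, but not for the theorem as stated).
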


\begin{proof} For the space $X$ fix a countable Pytkeev network $\mathcal P$ which is closed under finite unions. We claim that the countable family $\tilde{\mathcal P}$ consisting of the sets
$$\langle P_1,\dots,P_n\rangle=\{K\in\exp(X):K\subset P_1\cup\dots\cup P_n\mbox{ \ and \ }K\cap P_i\ne\emptyset\mbox{ for all $i\le n$}\}$$where $P_1,\dots,P_n\in\mathcal P$ is a strict Pytkeev network for $\exp(X)$.

Fix a compact set $K\in\exp(X)$, a neighborhood $O_K\subset\exp(X)$ and a set $\A\subset\exp(X)$ containing $K$ in its closure. We lose no generality assuming that the neighborhood $O_K$ is of the basic form $O_K=\langle U_1,\dots,U_m\rangle$ for some open sets $U_1,\dots,U_m\subset X$. Replacing $\A$ by $\A\cap O_K$ we can also assume that $\A\subset O_K$. Since $K$ is a compact Hausdorff space, we can find closed sets $K_1,\dots,K_m\subset K$ such that $K=\bigcup_{i=1}^mK_i$ and $K_i\subset U_i$ for all $i\in\{1,\dots,m\}$.

For every $i\in\{1,\dots,m\}$ put $\mathcal P(K_i,U_i)=\{P\in\mathcal P:K_i\subset P\subset U_i\}$. Let $\mathcal P(K_i,U_i)=\{P'_{i,n}\}_{n\in\w}$ be an enumeration of the countable family $\mathcal P(K_i,U_i)$ and for every $n\in\w$ let $P_{i,n}=\bigcup_{j\le n}P'_{i,j}$. Since the Pytkeev network $\mathcal P$ is closed under finite unions, the sets $P_{i,j}$, $j\in\w$, belong to $\mathcal P(K_i,U_i)$ and form an increasing sequence $(P_{i,j})_{j\in\w}$ with $\bigcup_{j\in\w}P_{i,j}=U_i$.

For every $j\in\w$ consider the set
$$\tilde P_{j}=\langle P_{1,j},\dots,P_{m,j}\rangle\in\tilde{\mathcal P}$$and observe that $\tilde P_j\subset\tilde P_{j+1}$.

\begin{claim}\label{hyp1} $O_K=\bigcup_{j\in\w}\tilde P_j$.
\end{claim}

\begin{proof} Given any compact set $C\in O_K$, we need to find a number $j\in\w$ with $C\in\tilde P_j$. It follows that $C\subset \bigcup_{i=1}^m U_i$ and $C\cap U_j\ne\emptyset$ for every $j\in\{1,\dots,m\}$. Since $C$ is a compact Hausdorff space, we can find non-empty closed subsets $C_1,\dots,C_m\subset C$ such that $C_i\subset U_i$ for all $i\le m$. By Proposition~\ref{p1.2}, the countable Pytkeev network $\mathcal P$ is a $k$-network for $X$. Consequently, there is $j\in\w$ such that $C_k\subset P_{k,j}\subset U_k$ for every $k\le m$. Then $C=\bigcup_{i=1}^nC_i\in\langle P_{1,j},\dots,P_{m,j}\rangle=\tilde P_j$.
\end{proof}

\begin{claim}\label{hyp2} If $K$ is an accumulation point of the set $\A$, then for some $j\in\w$ the intersection $\A\cap\tilde P_j$ is infinite.
\end{claim}

\begin{proof} Assume that for every $j\in\w$ the intersection $\A_j=\A\cap\tilde P_j$ is finite. Since $\A\subset O_K=\bigcup_{j\in\w}\tilde P_j$, the set $\A=\bigcup_{j\in\w}\A_j$ is the countable union of finite sets $\A_j$. For every compact set $\alpha\in\A\setminus\A_0$ find a unique number $j_\alpha\in\w$ such that $\alpha\in\A_{j_\alpha+1}\setminus\A_{j_\alpha}$. Then $\alpha\notin \tilde P_{j_\alpha}$ and hence either $\alpha\not\subset \bigcup_{i=1}^mP_{i,j_\alpha}$ or $\alpha\cap P_{i,j_\alpha}=\emptyset$ for some $i\in\{1,\dots,m\}$.

If $\alpha\not\subset \bigcup_{i=1}^mP_{i,j_\alpha}$, then put $i_\alpha=0$ and choose any point $x_\alpha\in \alpha\setminus\bigcup_{i=1}^mP_{i,j_\alpha}$.

If $\alpha\subset\bigcup_{i=1}^mP_{i,j_\alpha}$, then fix any number $i_\alpha\in \{1,\dots,m\}$ such that $\alpha\cap P_{i_\alpha,j_\alpha}=\emptyset$ and choose a point $x_\alpha\in \alpha\cap U_{i_\alpha}$.

Let $\A(0)=\{\alpha\in\A\setminus\A_0:i_\alpha=0\}$ and
$B_0=\{x_\alpha:\alpha\in \A(0)\}\cup \big(X\setminus \bigcup_{i=1}^mU_i\big)$.

For every $i\in\{1,\dots,m\}$ let $\A(i)=\{\alpha\in\A\setminus\A_0:i_\alpha=i\}$ and
$B_i=\{x_\alpha:\alpha\in\A(i)\}\cup (X\setminus U_i)$.

It follows that $\A\setminus\A_0=\bigcup_{i=0}^m\A(i)$.

\begin{subclaim} The set $B_0$ is closed in $X$.
\end{subclaim}

\begin{proof} Assuming that $B_0$ is not closed, we would find an accumulation point $y\in \bar B_0\setminus B_0$. Since $X\setminus\bigcup_{i=1}^mU_i\subset B_0$, we conclude that $y\in U_i$ for some $i\in\{1,\dots,m\}$. Since $\mathcal P$ is a strict Pytkeev network for $X$, there is a set $P\in\mathcal P$ such that $y\in P\subset U_i$ and $P\cap B_0$ is infinite. The Pytkeev network $\mathcal P$, being a $k$-network closed under unions, contains a set $P_{i,j}\in\mathcal P(K_i,U_i)$ such that $P\subset P_{i,j}$. Then the set $P_{i,j}$ has infinite intersection with the set $B_0\cap U_i\subset\{x_\alpha:\alpha\in\A(0)\}$ which is not possible as $x_\alpha\notin\bigcup_{i=1}^mP_{i,j}$ for $\alpha\in\A(0)\setminus\A_j$. This contradiction shows that the set $B_0$ is closed in $X$.
\end{proof}

\begin{subclaim} For every $i\in\{1,\dots,m\}$ the set $B_i$ is closed in $X$.
\end{subclaim}

\begin{proof} Assuming that $B_i$ is not closed, we can find an accumulation point $y\in\bar B_i\setminus B_i$. Since $X\setminus U_i\subset B_i$, the point $y$ belongs to $U_i$. Since $\mathcal P$ is a Pytkeev network and $k$-network, there is a set $P\in\mathcal P(K_i,U_i)$ such that the intersection $P\cap B_i$ is infinite. It follows that $P\subset P_{i,j}$ for some $j\in\w$. Since $P\cap B_i\subset P_{i,j}\cap \{x_\alpha:\alpha\in\A(i)\}\subset \{x_\alpha:\alpha\in\A(i)\cap\A_j\}$ the intersection $P\cap B_i$ is finite. This contradiction shows that the set $B_i$ is closed.
\end{proof}

The choice of the points $x_\alpha$, $\alpha\in\A$, guarantees that $B_0\cap K=\emptyset$ and $K_i\cap B_i=\emptyset$ for all $i\in\{1,\dots,m\}$. For every $i\in\{1,\dots,m\}$ let $V_i=U_i\setminus (B_i\cup B_0)$ and observe that $\langle V_1,\dots,V_m\rangle$ is an open neighborhood of $K$ in the hyperspace $\exp(X)$, which is disjoint with the set $\A\setminus\A_0=\bigcup_{i=0}^m\A(i)$. This implies that $K$ is not an accumulation point of the set $\A$, and this is a desired contradiction.
\end{proof}

Claims~\ref{hyp1}, \ref{hyp2} complete the proof of Theorem~\ref{t3.6}.
\end{proof}

Proposition~\ref{p3.1}, \ref{p3.2n} and Theorems~\ref{t3.4}, \ref{t3.5}, \ref{t3.6}, \ref{main} imply the following stability result for the class of $\Pyt_0$-spaces.

\begin{corollary} The class of $\Pyt_0$-spaces is closed under taking subspaces, countable topological sums, countable Tychonoff products, countable small box-products,  the hyperspaces, and the function spaces endowed with the compact-open topology.
\end{corollary}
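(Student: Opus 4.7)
The statement is essentially a bookkeeping corollary assembling the results just proved, so the plan is almost entirely one of citation, with a small bit of extra work to verify that regularity (the other half of the $\Pyt_0$ definition) is preserved by each of the six operations. The countable-Pytkeev-network half is handled directly: Proposition~\ref{p3.1} gives subspaces, Proposition~\ref{p3.2n} gives countable topological sums, Theorem~\ref{t3.4} gives countable Tychonoff products, Theorem~\ref{t3.5} gives countable small box-products, Theorem~\ref{t3.6} gives the hyperspace $\exp(X)$, and Theorem~\ref{main}, applied with $\I$ equal to the ideal of all compact subsets of $X$ (which is discretely-complete), gives the function space $C_k(X,Y)$ whenever $X$ is an $\aleph_0$-space and $Y$ is a $\Pyt_0$-space.

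For regularity, I would simply observe that it is a classical (and routine) fact that each of the listed operations preserves the class of regular $T_1$-spaces: subspaces of regular spaces are regular; topological sums of regular spaces are regular; arbitrary Tychonoff products of regular spaces are regular, hence so is the subspace that is the small box-product; the Vietoris hyperspace $\exp(X)$ of a regular Hausdorff space is regular; and $C_k(X,Y)$ inherits regularity from $Y$ (separation of a function $f$ from a basic closed set is carried out coordinatewise on a compact set, using regularity of $Y$). Combining the preservation of countable Pytkeev networks with the preservation of regularity finishes the proof.

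The only point that requires any real care is the function space clause, because Theorem~\ref{main} is stated with hypotheses on $X$ (Hausdorff with countable $k$-network) rather than the hypothesis ``$X$ is a $\Pyt_0$-space.'' Here I would note that since every $\Pyt_0$-space is in particular an $\aleph_0$-space by Proposition~\ref{p1.2}, the hypothesis on $X$ is satisfied, so Theorem~\ref{main} applies and gives a countable (strict) Pytkeev network on $C_k(X,Y)$. No genuine obstacle arises; the corollary is a clean assembly of the earlier theorems, and the proof is three or four lines of citation plus the remark that regularity is preserved in each case.
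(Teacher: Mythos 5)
Your proposal is correct and matches the paper's own treatment, which proves this corollary purely by citing Proposition~\ref{p3.1}, Proposition~\ref{p3.2n} and Theorems~\ref{t3.4}, \ref{t3.5}, \ref{t3.6} and \ref{main} --- exactly the references you assemble. Your extra remarks on the preservation of regularity and on verifying the hypotheses of Theorem~\ref{main} via Proposition~\ref{p1.2} are points the paper leaves implicit, and they are handled correctly.
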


Finally we detect free abelian topological groups and free locally convex linear topological spaces which are $\Pyt_0$-spaces.

By a {\em free abelian topological group} over a topological space $X$ we understand a pair $(A(X),i_X)$ consisting of an abelian topological group $A(X)$ and a continuous map $i_X:X\to A(X)$ such that for every map $f:X\to G$ to an abelian topological group $G$ there exists a unique continuous group homomorphism $h:A(X)\to G$ such that $f=h\circ i_X$.

A {\em free locally convex space} over a topological space $X$ is a pair $(L(X),j_X)$ consisting of a locally convex linear topological space $L(X)$ and a continuous map $j_X:X\to L(X)$ such that for every map $f:X\to Y$ to a locally convex linear topological space $Y$ there exists a unique linear continuous operator $\lambda:L(X)\to Y$ such that $f=\lambda\circ j_X$.

Since $L(X)$ is an abelian topological group, there is a unique continuous group homomorphism $h:A(X)\to L(X)$ such that $h\circ i_X=j_X$. By \cite{Tk83}, the homomorphism $h:A(X)\to L(X)$ is a topological embedding.

For every topological space $X$ the function space $C_k(C_k(X))$ is a locally convex linear topological space. Each point $x\in X$ can be identified with the {\em Dirac measure} $\delta_x:C_k(X)\to \IR$, which assigns to each function $\varphi\in C_k(X)$ its value $\varphi(x)$. Thus we define the canonical map $\delta_X:X\to C_k(C_k(X))$
, $\delta_X:x\mapsto\delta_x$. The map $\delta_X$ is continuous if $X$ is a $k$-space and is a topological embedding if $X$ is a Tychonoff $k$-space (this follows from Lemma 3.4.18 and Ascoli Theorem 3.4.20 in [En]).

It follows that for a Tychonoff $k$-space there is a unique linear operator $\lambda:L(X)\to C_k(C_k(X))$ such that $\lambda\circ j_X=\delta_X$. By \cite{Us83} or \cite{Flood}, the linear operator $\lambda$ is a topological embedding. Combining this fact with Corollary~\ref{tipa2} (applied two times), we obtain the following result first observed by A.Leiderman.

\begin{theorem}[Leiderman]\label{t:leider} For any sequential $\aleph_0$-space $X$ the abelian topological group $A(X)$ and the free locally convex linear topological space $L(X)$ both are $\Pyt_0$-spaces.
\end{theorem}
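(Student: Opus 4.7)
The plan is to realize both $A(X)$ and $L(X)$ as subspaces of the double function space $C_k(C_k(X))$ and to deduce the $\Pyt_0$ property from the stability of the class under subspaces. All the necessary tools are already in place: Proposition~\ref{p3.1} (hereditariness of the $\Pyt_0$ property), Corollary~\ref{tipa2} (which upgrades any $\aleph_0$-space to a $\Pyt_0$-space after one $C_k$), together with Proposition~\ref{p1.2} (every $\Pyt_0$-space is an $\aleph_0$-space), and the embeddings $i_X,\;h,\;\lambda,\;\delta_X$ already discussed in the paragraphs preceding the theorem.

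First I would verify that the hypothesis ``sequential $\aleph_0$-space'' triggers all the regularity assumptions needed by the embedding theorems. Every $\aleph_0$-space is by definition regular and has a countable network, hence is hereditarily Lindel\"of, so that the regularity upgrades to Tychonoff-ness; sequentiality implies $X$ is a $k$-space. Thus $X$ is a Tychonoff $k$-space, which is exactly the setting in which the map $\delta_X\colon X\to C_k(C_k(X))$ is a topological embedding, and in which the Usenko--Flood theorem gives that the canonical linear operator $\lambda\colon L(X)\to C_k(C_k(X))$ is a topological embedding. Combined with Tkachenko's theorem that $h\colon A(X)\to L(X)$ is a topological embedding, we obtain a chain of topological embeddings
$$
A(X)\;\hookrightarrow\;L(X)\;\hookrightarrow\;C_k(C_k(X)).
$$

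Next I would produce a $\Pyt_0$-structure on $C_k(C_k(X))$ by applying Corollary~\ref{tipa2} twice. Since $X$ is an $\aleph_0$-space, Corollary~\ref{tipa2} yields that $C_k(X)$ is a $\Pyt_0$-space; by Proposition~\ref{p1.2} it is therefore also an $\aleph_0$-space. A second application of Corollary~\ref{tipa2} (with $C_k(X)$ in place of $X$) then shows that $C_k(C_k(X))$ is a $\Pyt_0$-space.

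Finally, since $L(X)$ and $A(X)$ embed topologically into the $\Pyt_0$-space $C_k(C_k(X))$, Proposition~\ref{p3.1} delivers the conclusion. There is essentially no obstacle in the argument itself; the only subtlety is bookkeeping, namely ensuring that $X$ is Tychonoff and a $k$-space so that each of the three quoted embedding theorems (of Tkachenko, Usenko/Flood, and the standard Ascoli-type embedding $\delta_X$) applies. Both statements of Theorem~\ref{t:leider} then follow simultaneously.
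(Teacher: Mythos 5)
Your proposal is correct and follows essentially the same route as the paper: the chain of embeddings $A(X)\hookrightarrow L(X)\hookrightarrow C_k(C_k(X))$ (Tkachenko, Uspenski\u\i/Flood, and the Ascoli-type embedding $\delta_X$ for Tychonoff $k$-spaces), a double application of Corollary~\ref{tipa2}, and hereditariness via Proposition~\ref{p3.1}. Your added bookkeeping that a sequential $\aleph_0$-space is a Tychonoff $k$-space is a welcome explicit verification of hypotheses the paper leaves implicit.
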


\begin{problem} Let $X$ be a (sequential) $\Pyt_0$-space. Is the free topological group over $X$ a $\Pyt_0$-space? Is the free linear topological space over $X$ a $\Pyt_0$-space? {\rm (The answer to both questions is affirmative if $X$ is a submetrizable $k_\w$-space).}
\end{problem}

\section{Detecting $\Pyt_0$-spaces among topological spaces endowed with compatible algebraic structures}\label{s4}

In this section we shall detect $\Pyt_0$-spaces among topological spaces endowed with compatible algebraic structures.
First we recall some algebraic notions. The most basic one is that of  {\em magma}, i.e., a set $X$ endowed with a binary operation $\cdot:X\times X\to X$ which assigns to any pair of points $(x,y)\in X\times X$ their product $xy$.
Any point $a$ of a magma $X$ determines two maps $L_a:X\to X$, $L_a:x\mapsto ax$, and $R_a:X\to X$, $R_a:x\mapsto xa$, called the {\em left shift} and the {\em right shift} by $a$, respectively. An element $e$ of a magma is called a {\em unit} if $xe=x=ex$ for all $x\in X$. This is equivalent to saying that the left shift $L_e$ and the right shift $R_e$ both coincide with the identity map of $X$. It is standard to show that any two units in a magma coincide, so a magma can have at most one unit.

Now we recall the definitions of some classes of magmas.

\begin{definition}A magma $X$ is called
\begin{itemize}
\item {\em associative} if $(xy)z=x(yz)$ for any points $x,y,z\in X$;
\item a {\em semigroup} if $X$ is an associative magma;
\item a {\em monoid} if $X$ is a semigroup with unit;
\item a {\em group} if $X$ is a semigroup with unit $e$ such that for each $x\in X$ there is an element $x^{-1}\in X$ such that $xx^{-1}=e=x^{-1}x$;
\item a {\em loop} if $X$ has a unit and for every $a\in X$ the left shift $L_a:X\to X$ and the right shift $R_a:X\to X$ both are bijective;
\item a {\em lop} if $X$ has a unit and for every $a\in X$ the left shift $L_a:X\to X$ is bijective.
\end{itemize}
\end{definition}
These notions relate as follows:
$$
\xymatrix{
\mbox{associative lop}\ar@{<=>}[r]\ar@{<=>}[d]&\mbox{group}\ar@{=>}[r]\ar@{=>}[d]&\mbox{monoid}\ar@{=>}[r]&\mbox{semigroup}\ar@{=>}[d]\\
\mbox{associative loop}\ar@{=>}[r]&
\mbox{loop}\ar@{=>}[r]&\mbox{lop}\ar@{=>}[r]&\mbox{magma.}\\
}
$$

\begin{remark} The notions of semigroup, monoid, group and loop are well-known and standard in Algebra. Magmas were introduced by Bourbaki in \cite{Brb}. For magmas there is an alternative term ``groupoid'' which however has many other different meanings (especially in Category Theory). So, we prefer to use magmas stressing its basic nature for all other algebraic notions. The notion of lop seems to be new. It is an intermediate notion between those of loop and left-loop (i.e., a magma $X$ with right unit and bijective left shifts).
\end{remark}

For our purposes the basic notion is that of lop. As we already know, the class of lops include all loops and all groups. For a lop $X$ its binary operation will be denoted by $p:X\times X\to X$, $p:(x,y)\mapsto xy$, and called the {\em multiplication operation} on $X$. By definition of a lop, each left shift $L_x:X\to X$, $x\in X$, is bijective. So, we can consider its inverse $L_x^{-1}$ and define the binary operation $q:X\times X\to X$, $q:(x,y)\mapsto L_x^{-1}(y)$, which will be called the {\em division operation} on $X$. It will be convenient to denote the element $q(x,y)=L_x^{-1}(y)$ by $x^{-1}y$.  The equalities $L_x\circ L_x^{-1}=\mathrm{id}=L_x^{-1}\circ L_x$ imply that $x(x^{-1}y)=y=x^{-1}(xy)$ for all $x,y\in X$. So, lops can be considered as universal algebras with one $0$-ary operation $X^0\to \{e\}\subset X$ selecting the unit $e$ of $X$ and two binary operations $p:X\times X\to X$, $p:(x,y)\mapsto xy$ and $q:X\times X\to X$, $q(x,y)\mapsto x^{-1}y$, satisfying the identities: $xe=x=ex$, $x^{-1}(xy)=y=x(x^{-1}y)$ for all $x,y\in X$. For two subsets $A,B$ of a lop $X$ let $AB=\{ab:a\in A,\;b\in B\}$ and $A^{-1}B=\{a^{-1}b:a\in A,\;b\in B\}$ be the results of their pointwise multiplication and division in the lop $X$.

Now we shall define some compatible topological structures on lops. By a {\em topologized lop} we  understand a lop endowed with a topology.

\begin{definition} A topologized lop is called
\begin{itemize}
\item a {\em semitopological lop} if the multiplication $p:X\times X\to X$ is separately continuous and all left shifts $L_a:X\to X$, $a\in X$, are homeomorphisms;
\item a {\em quasitopological lop} if the multiplication $p:X\times X\to X$ and the division $q:X\times X\to X$ both are separately continuous;
\item a {\em paratopological lop} if $X$ is a semitopological lop with continuous multiplication $p:X\times X\to X$;
\item a {\em topological lop} if the multiplication $p:X\times X\to X$ and division $q:X\times X\to X$ both are continuous;
\item a {\em topological group} (resp. {\em semitopological, quasitopoloical, paratopological} {\em group}) is $X$ is an associative  topological lop (resp. semitopological, quasitopological, paratopological lop).
    \end{itemize}
\end{definition}
All these notions relate as follows:
$$
\xymatrix{
\mbox{topological lop}\ar@{=>}[rrr]\ar@{=>}[ddd]&
&&\mbox{paratopological lop}\ar@{=>}[ddd]\\
&\mbox{topological group}\ar@{=>}[r]\ar@{=>}[ul]\ar@{=>}[d]&
\mbox{paratopological group}\ar@{=>}[d]\ar@{=>}[ru]\\
&\mbox{quasitopological group}\ar@{=>}[r]\ar@{=>}[dl]&
\mbox{semitopological group}\ar@{=>}[dr]\\
\mbox{quasitopological lop}\ar@{=>}[rrr]&
&&\mbox{semitopological lop}
}
$$

\begin{remark}
The notion of topological (semitopological, quasitopological, paratopological) lop generalizes the notion of topological (semitopological, quasitopological, paratopological) group, well-known in Topological Algebra (see \cite{ArT}).
The class of topological lops is contained in the class of topological left-loops introduced recently by Hofmann and Martin \cite{HM}. A {\em topological left-loop} is a topological space $X$ endowed with a distinguished point $e$ and two continuous binary operations $p:X\times X\to X$, $p:(x,y)\mapsto xy$, and $q:X\times X\to X$, $q:(x,y)\mapsto x^{-1}y$ such that $xe=x$ and $x(x^{-1}y)=y=x^{-1}(xy)$ for all $x,y\in X$. A topological left-loop $(X,e)$ is a topological lop if and only if $ex=x$ for all $x\in X$.
\end{remark}

\begin{remark} It is well-known that if the topological space of a topological group $G$ satisfies the separation axiom $T_0$, then it is Tychonoff (i.e., satisfies the separation axiom $T_{3\frac12}$). For topological lops a bit weaker result is true: each topological lop satisfying the separation axiom $T_0$ is regular (i.e., is a $T_3$-space). This follows from Corollary 2.2 \cite{Gul} (saying that each rectifiable $T_0$-space is regular) and Proposition~\ref{p:reclop} (saying that each topological lop is a rectifiable space). So, from now on we assume that all topological lops satisfy the separation axiom $T_0$ and hence are regular spaces. It is not known if topological lops are automatically Tychonoff (see Problem 4.18 \cite{Ar02}).
\end{remark}

The following theorem detects $\Pyt_0$-spaces among paratopological lops.

\begin{theorem}\label{t:paraPi} A paratopological lop $X$ is a $\Pyt_0$-space if and only if $X$ is a cosmic space with the strong Pytkeev property.
\end{theorem}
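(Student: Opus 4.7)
The forward implication is immediate: a countable Pytkeev network for a regular space $X$ is simultaneously a countable network (so $X$ is cosmic) and a countable Pytkeev network at each individual point (so $X$ has the strong Pytkeev property). The substantive direction is the converse, and the idea is to use the algebraic operations of the paratopological lop to ``transport'' a local Pytkeev network at the unit $e$ into a global Pytkeev network for $X$, using the countable cosmic network to absorb dependence on the base point.

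Concretely, I would fix a countable network $\mathcal N$ for $X$ (supplied by cosmicity) and a countable Pytkeev network $\mathcal P_e$ at the unit $e$ (supplied by the strong Pytkeev property), and claim that the countable family
$$\mathcal F=\{N\cdot P:N\in\mathcal N,\;P\in\mathcal P_e\}$$
of pointwise products in the lop is a Pytkeev network for $X$. To verify the Pytkeev condition at an arbitrary point $x\in X$, fix a neighborhood $O_x$ of $x$ and a set $A\subset X$ accumulating at $x$. Joint continuity of the multiplication $p:X\times X\to X$ at the point $(x,e)$ (this is where the paratopological hypothesis enters) produces open sets $U\ni x$ and $V\ni e$ with $U\cdot V\subset O_x$. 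Pick $N\in\mathcal N$ with $x\in N\subset U$. Since the left shift $L_x$ is a homeomorphism sending $e$ to $x$, the set $x^{-1}A=L_x^{-1}(A)$ accumulates at $e$, and the Pytkeev property of $\mathcal P_e$ applied to the neighborhood $V$ of $e$ and the accumulating set $x^{-1}A$ yields $P\in\mathcal P_e$ with $P\subset V$ and $P\cap x^{-1}A$ infinite. Then $N\cdot P\in\mathcal F$ satisfies $x=x\cdot e\in N\cdot P\subset U\cdot V\subset O_x$, and contains $xP\cap A=L_x(P\cap x^{-1}A)$, which is infinite because $L_x$ is a bijection. Running the same template with any $P\in\mathcal P_e$ satisfying $e\in P\subset V$ (ignoring the accumulation clause) shows simultaneously that $\mathcal F$ is a network at $x$.

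The main obstacle is to keep the two structural ingredients properly separated: the cosmic network $\mathcal N$ must absorb the dependence on the point $x$, which ranges over an uncountable space, while the \emph{single} Pytkeev network $\mathcal P_e$ at one fixed point is responsible for the Pytkeev condition uniformly in $x$. This separation succeeds precisely because the paratopological lop structure delivers two complementary facts: joint continuity of multiplication, used to manufacture the open sets $U,V$ with $U\cdot V\subset O_x$; and the bijective homeomorphism property of each left shift $L_x$, used both to transfer accumulation from $x$ to $e$ and to recognize the identity $xP\cap A=L_x(P\cap x^{-1}A)$. Under only separate continuity (the semitopological case) the first step would collapse, suggesting that an analogous statement for semitopological lops would require a genuinely different argument.
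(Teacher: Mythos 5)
Your proposal is correct and follows essentially the same route as the paper: both take the countable family of products $NP$ with $N$ from a cosmic network and $P$ from a Pytkeev network at the unit, use joint continuity of multiplication at $(x,e)$ to get $U\cdot V\subset O_x$, and transfer accumulation from $x$ to $e$ via the left-shift homeomorphism $L_x$. The only cosmetic difference is that the paper first replaces $\mathcal P_e$ by $\{P\cup P':P,P'\in\mathcal P_e\}$ to make it a \emph{strict} Pytkeev network at $e$, so that a single set $P$ witnesses both $e\in P$ and $|P\cap x^{-1}A|=\infty$, whereas you verify the network and accumulation clauses with separate choices of $P$ (which is why your assertion $x=x\cdot e\in N\cdot P$ in the accumulation case needs either that adjustment or simply to be dropped, as the Pytkeev condition does not require $x\in NP$).
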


\begin{proof} The ``only if'' part of this theorem is trivial. To prove the ``if'' part, assume that $\mathcal P$ is a countable Pytkeev network at the unit $e$ of $X$ and $\mathcal N$ is a countable network for $X$.
Replacing the family $\mathcal P$ by $\mathcal P\vee\mathcal P=\{A\cup B:A,B\in\mathcal P\}$, we can assume that $\mathcal P$ is a strict Pytkeev network at $e$. For two sets $A,B\subset X$, by $AB=\{ab:a\in A,\;b\in B\}$ we denote their pointwise product in the lop $X$.

 We claim that the countable family
$$\mathcal N\mathcal P=\{NP:N\in\mathcal N,\;P\in\mathcal P\}$$is a strict
Pytkeev network for the space $X$.
Given a set $A\subset X$, a point $x\in\bar A$ and a neighborhood $O_x\subset X$ of $x$ we need to find sets $N\in\mathcal N$ and $P\in\mathcal P$ such that $x\in NP\subset U$ and moreover $A\cap NP$ is infinite if $x$ is an accumulation point of $A$.

Using the continuity of the multiplication $p:X\times X\to X$ at $(x,e)$, we can find neighborhoods $U_x\subset X$ of $x$ and $U_e\subset X$ of $e$ such that $U_x U_e:=p(U_x\times U_e)\subset O_x$.
Since $\mathcal N$ is a network in $X$, there is a set $N\in\mathcal N$ such that $x\in N\subset U_x$.

Since the left shift $L_x:X\to X$ is a homeomorphism, the point $e=L_x^{-1}(x)=x^{-1}x$ belongs to the set $x^{-1}A:=L_x^{-1}(A)$. Moreover $e$ is an accumulation point of $x^{-1}A$ if and only if $x$ is  an accumulation point of $A$. Since $\mathcal P$ is a strict Pytkeev network at $e$, there is a set $P\in\mathcal P$ such that $e\in P\subset U_x$ and moreover $P\cap x^{-1}A$ is infinite if $x^{-1}A$ accumulates at $e$. Then the set $NP\in\mathcal N\mathcal P$ has the desired property: $x\in NP\subset U_xU_e\subset O_x$. Moreover, if $x$ is an accumulating point of $A$, then $e$ is accumulating point of $x^{-1}A$, $P\cap x^{-1}A$ is infinite and so are
the sets $x(P\cap x^{-1}A)=xP\cap A\subset NA\cap A$.
\end{proof}

For topological lops the cosmicity in Theorem~\ref{t:paraPi} can be weakened to separability as shown in the following theorem inspired by a result of Gabriyelyan and K\c akol \cite{GK2} (see Corollary~\ref{c:gab}).

\begin{theorem}\label{t:Saaklop} A topological lop $X$ has a countable Pytkeev network if and only if $X$ is a separable space with the strong Pytkeev property.
\end{theorem}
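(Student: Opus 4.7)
The ``only if'' direction is immediate: a countable strict Pytkeev network $\mathcal{N}$ is in particular a countable network, yielding a countable dense subset of $X$ (pick one point per non-empty member), and its restriction $\{N\in\mathcal{N}:x\in N\}$ is a countable Pytkeev network at each $x$, giving the strong Pytkeev property.

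For the ``if'' direction, fix a countable dense subset $D\subset X$ and a countable strict Pytkeev network $\mathcal{P}_e$ at the unit $e$, which we may assume closed under finite unions (so every $P\in\mathcal{P}_e$ contains $e$). The proposed countable Pytkeev network is
\[
\mathcal{N}=\{dP:d\in D,\ P\in\mathcal{P}_e\}.
\]
The verification is driven by a translation lemma exploiting the full topological-lop structure: for any $x\in X$ and open $O_x\ni x$, joint continuity of the multiplication $p$ at $(x,e)$ gives open $V\ni x$, $W\ni e$ with $VW\subset O_x$; joint continuity of the division $q$ at $(x,x)$, where $q(x,x)=e$, gives open $U\ni x$ with $U^{-1}U\subset W$; density then produces $d\in D\cap V\cap U$ with $d\in O_x$ (since $d=de\in VW$) and $d^{-1}x\in W$. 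The homeomorphism $L_d^{-1}$ thereby translates any local question at $x$ into a prescribed neighborhood of $e$, where $\mathcal{P}_e$ is designed to operate.

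For the Pytkeev condition at $x$ with $A$ accumulating at $x$: the translate $x^{-1}A$ accumulates at $e$, and so does $x^{-1}D$ by density (the discrete case is trivial and handled separately). Applying the Pytkeev property of $\mathcal{P}_e$ to the union $x^{-1}A\cup x^{-1}D$ inside $W\cap x^{-1}O_x$, and using closure under finite unions, yields $P\in\mathcal{P}_e$ with $P\subset W\cap x^{-1}O_x$ meeting both $x^{-1}A$ and $x^{-1}D$ infinitely often; then $xP\subset O_x$ meets both $A$ and $D$ infinitely. Selecting $d\in xP\cap D$ and enlarging $P$ within $\mathcal{P}_e$ by union with a witness extracted from a second application of the Pytkeev property (applied to $d^{-1}A$ at the translated base point) gives $dP\in\mathcal{N}$ with $dP\subset O_x$ and $dP\cap A$ infinite.

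The main obstacle is the network requirement $x\in dP$, and the analogous strengthening demanded by a strict Pytkeev witness: both require $d^{-1}x\in P$ for some $P\in\mathcal{P}_e$, while $\mathcal{P}_e$ is only a network at $e\ne d^{-1}x$. Overcoming this is the crux of the proof: one combines the translation lemma (which pins $d^{-1}x$ inside the prescribed small neighborhood $W$ of $e$) with a careful further application of the Pytkeev property at $e$, exploiting the infinitely many admissible choices of $d\in xP\cap D$ together with the countability of $\mathcal{P}_e$ to force the existence of a pair $(d,P')$ for which $d^{-1}x\in P'$, $dP'\subset O_x$, and $dP'\cap A$ infinite simultaneously. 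Once this pair is supplied uniformly in $x$, $O_x$, and $A$, the family $\mathcal{N}$ is a countable strict Pytkeev network, and the regular space $X$ is a $\Pyt_0$-space.
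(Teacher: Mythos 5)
Your ``only if'' direction is fine, and your opening translation set-up (continuity of multiplication at $(x,e)$ and of division to pull a dense point $d$ close to $x$ in the sense $d^{-1}x\in W$) matches the paper's. But the ``if'' direction has a genuine gap at exactly the place you flag as ``the crux'', and the gap is not closed by the argument you sketch. Your candidate family $\mathcal N=\{dP:d\in D,\ P\in\mathcal P_e\}$ forces you to verify $dP\cap A$ infinite, i.e.\ $P\cap d^{-1}A$ infinite; but $d^{-1}A$ accumulates at $d^{-1}x$, not at $e$, and $\mathcal P_e$ is a Pytkeev network only at $e$. So the ``second application of the Pytkeev property applied to $d^{-1}A$ at the translated base point'' is not legitimate: no countable Pytkeev network at $d^{-1}x$ is available inside your fixed countable data, and having $d^{-1}x$ merely close to $e$ does not help. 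The same difficulty blocks the network requirement $d^{-1}x\in P$: you assert that countability of $\mathcal P_e$ and the many admissible $d$'s ``force'' a good pair $(d,P')$, but no argument is given, and this is precisely the nontrivial point.

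The paper resolves both issues with two devices absent from your proposal. First, the network members are two-layered products $(dP)Q$ rather than $dP$: the inner factor $dP$ is used only to catch the point $a$ (so $a\in dP$), after which $aQ\subset (dP)Q$, and the accumulation requirement reduces to $Q\cap a^{-1}A$ being infinite --- a question about a set accumulating at $e$ itself, where $\mathcal P_e$ genuinely operates. This decouples the ``hit the point'' problem from the ``capture $A$'' problem, which your single-layer family entangles. Second, to get $d^{-1}a\in P$ for some $P\in\mathcal P_e$, the paper proves a separate lemma: after normalizing so that every $P\in\mathcal P_e$ contains the intersection $\ddot e$ of all neighborhoods of $e$, the union $\bigcup\{P\in\mathcal P_e:P\subset U_e\}$ is a neighborhood of $e$ (otherwise its complement would accumulate at $e$ and violate the Pytkeev property, and the finite leftover is removed using $\ddot e$). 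Any $d\in D$ with $d^{-1}a$ in that neighborhood then works. Without these two ingredients --- or a genuinely new replacement for them --- your proof does not go through.
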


\begin{proof} The ``only if'' part of this theorem is trivial. To prove the ``if'' part, assume that $X$ is a separable space with the strong Pytkeev property. Fix a countable dense subset $D\subset X$ and a  countable Pytkeev network $\mathcal P$ at the unit $e$ of $X$. We lose no generality assuming that $\mathcal P$ is closed under finite unions and each set $P\in\mathcal P$ contains the intersection $\ddot e$ of all neighborhoods of $e$ in $X$.

 We claim that the countable family
$$\mathcal N=\{(xP)Q:x\in D,\;P,Q\in\mathcal P\}$$is a strict
Pytkeev network for the space $X$.
Given a set $A\subset X$, a point $a\in\bar A$ and a neighborhood $O_a\subset X$ of $a$ we need to find a point $x\in D$ and two sets $P,Q\in\mathcal P$ such that $a\in (xP)Q\subset O_x$ and moreover $A\cap (xP)Q$ is infinite if $a$ is an accumulation point of $A$.
Since $(ae)e=a$, by the continuity of the multiplication, we can find open sets $U_a\ni a$ and $U_e\ni e$ in $X$ such that $(U_aU_e)U_e\subset O_a$.
%Since $a^{-1}a=e$, we can use the continuity of the division in topological lop $X$ and find a neighborhood $V_a\subset U_a$ such that $V_a^{-1}V_a\subset U_e$.

Consider the subfamily $\mathcal P'=\{P\in\mathcal P:P\subset U_e\}$. We claim that $\bigcup\mathcal P'$ is a neighborhood of $e$ in $X$. Assuming the opposite, we conclude that the set $B=X\setminus \bigcup\mathcal P'$ contains the point $e$ it its closure. Since $\mathcal P'$ is a Pytkeev network at $e$, the set $B$ cannot accumulate at $e$. So, there is a neighborhood $W$ of $e$ such that $W\cap B$ is finite. Since the intersection $\ddot e$ of all neighborhoods of $e$ misses the set $B$, we can find a neighborhood $W'\subset W$ of $e$ such that $W'\cap (W\cap B)=\emptyset$. Then the neighborhood $W_e=W\cap W'$ of $e$ is disjoint with the set $B$ and hence is contained in $\bigcup\mathcal P'\subset U_e$. Since $a^{-1}a=e$, we can use the (separate) continuity of division in $X$ and find a neighborhood $W_a\subset U_a$ of $a$ such that $W_a^{-1}a\subset W_e$. Finally, choose any point $x\in D\cap W_a$ and observe that $x^{-1}a\in W_e\subset\bigcup\mathcal P'$ and hence $x^{-1}a\in P$ for some set $P\in\mathcal P'$. Then $a\in xP$. Now consider the left shift $L_a:X\to X$, $x\mapsto ax$, and observe that the point $L_a^{-1}(a)=a^{-1}a=e$ belongs to the closure of the set $L_a^{-1}(A)=a^{-1}A$. Moreover, $e$ is an accumulating point of the set $a^{-1}A$ if and only if $a$ is an accumulation point of $A$. Since $\mathcal P$ is a Pytkeev network at $e$, there is a set $Q\in\mathcal P$ such that $e\in Q\subset U_e$ and  $Q\cap a^{-1}A$ is infinite if $e$ is an accumulation point of $a^{-1}A$. Now consider the set $(xP)Q\in\mathcal N$ and observe that $a\in (xP)e\subset (xP)Q\subset (U_aU_e)U_e\subset O_e$. If $a$ is an accumulation point of $A$, then $e$ is an accumulation point if $a^{-1}A$ and hence the intersections $Q\cap a^{-1}A$ and $aQ\cap A$ are infinite. Since $aQ\subset (xP)Q$, the intersection $(xP)Q\cap A$ is infinite as well.
\end{proof}

For (para)topological groups Theorems~\ref{t:paraPi} and \ref{t:Saaklop} imply the following two corollaries.

\begin{corollary}\label{c:Pigroup} A paratopological group is a $\Pyt_0$-space if and only if it is a cosmic space with the strong Pytkeev property.
\end{corollary}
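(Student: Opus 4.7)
The plan is to derive this corollary as an immediate specialization of Theorem~\ref{t:paraPi}, the substantive work having already been done there. The only observation needed is that every paratopological group is, by definition, an associative paratopological lop, so all hypotheses and conclusions transfer verbatim.

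For the forward implication I would argue as follows. If the paratopological group $X$ is a $\Pyt_0$-space, then by definition $X$ is regular and admits a countable Pytkeev network $\mathcal N$. By Proposition~\ref{p1.2}, $\mathcal N$ is in particular a $k$-network, and hence a network, so $X$ is a cosmic space. Moreover, the single countable family $\mathcal N$ serves as a countable Pytkeev network at each point of $X$, which is exactly the strong Pytkeev property. So the forward direction uses nothing about the group structure beyond the definitions, and just the easy half of Proposition~\ref{p1.2}.

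For the reverse implication, suppose $X$ is a cosmic paratopological group with the strong Pytkeev property. Regarding $X$ as a paratopological lop (multiplication is jointly continuous, left shifts are homeomorphisms because $L_a^{-1}=L_{a^{-1}}$ exists and is continuous in any paratopological group in the sense needed for the lop axioms on left shifts being bijective), Theorem~\ref{t:paraPi} applies and delivers a countable Pytkeev network on $X$. Since $X$ is cosmic it is in particular regular, so $X$ is a $\Pyt_0$-space.

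I do not expect any obstacle in this argument: the corollary is a pure application of Theorem~\ref{t:paraPi}, and the only thing to verify is the bookkeeping that the paratopological group axioms imply the paratopological lop axioms. The sole subtlety worth flagging would be confirming that the bijectivity and homeomorphism property of left shifts in a paratopological group is available without assuming continuity of inversion; but this follows because in a group, $L_a^{-1}=L_{a^{-1}}$, and continuity of $L_{a^{-1}}$ is part of the definition of a paratopological (in fact, even semitopological) group.
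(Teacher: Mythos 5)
Your proposal is correct and matches the paper exactly: the paper offers no separate argument for this corollary, deriving it immediately from Theorem~\ref{t:paraPi} via the observation that a paratopological group is an associative paratopological lop (with left shifts homeomorphisms since $L_a^{-1}=L_{a^{-1}}$), which is precisely your reasoning. The forward direction via Proposition~\ref{p1.2} is likewise the intended trivial half.
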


\begin{corollary}[Gabriyelyan-K\c akol]\label{c:gab} A topological group is a $\Pyt_0$-space if and only it is separable and has the strong Pytkeev property.
\end{corollary}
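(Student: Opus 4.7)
The plan is to derive this corollary as an immediate specialization of Theorem~\ref{t:Saaklop} to the associative case. Recall that every topological group is, in particular, an (associative) topological lop: the multiplication $p\colon X\times X\to X$ is continuous, and the division $q\colon (x,y)\mapsto x^{-1}y$ can be written as $q(x,y)=\iota(x)\cdot y$ where $\iota$ is the continuous inversion, so $q$ is continuous as well. Hence the hypotheses of Theorem~\ref{t:Saaklop} apply to any topological group $X$.

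First I would handle the forward implication, which does not even need the group structure: if $X$ is a $\Pyt_0$-space, then by definition $X$ has a countable Pytkeev network $\mathcal N$. Such an $\mathcal N$ is in particular a countable network, so $X$ is cosmic and therefore separable, and $\mathcal N$ restricts to a countable Pytkeev network at every point, so $X$ has the strong Pytkeev property. This direction is essentially definitional.

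For the converse, assume $X$ is a separable topological group with the strong Pytkeev property. Viewing $X$ as a topological lop, Theorem~\ref{t:Saaklop} produces a countable Pytkeev network for $X$. It remains to check that $X$ satisfies the regularity clause in the definition of a $\Pyt_0$-space; but this is standard Topological Algebra: every $T_0$ topological group is Tychonoff (and hence regular), and the existence of a countable network forces $X$ to be $T_0$ (non-$T_0$ topological groups have no countable network unless they coincide with the closure of the identity, a degenerate case that is trivially a $\Pyt_0$-space). Thus $X$ is a regular space with a countable Pytkeev network, i.e., a $\Pyt_0$-space.

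No real obstacle arises here: the whole content sits inside Theorem~\ref{t:Saaklop}, and the only step of substance is the trivial observation that topological groups fall into the class of topological lops to which that theorem applies. The role of this corollary is simply to record the group-theoretic special case (originally due to Gabriyelyan and K\c akol) and to exhibit Theorem~\ref{t:Saaklop} as a genuine extension of their result to the nonassociative setting.
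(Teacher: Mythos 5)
Your proof is correct and takes essentially the same route as the paper: the corollary is presented there as an immediate specialization of Theorem~\ref{t:Saaklop}, since every topological group is a topological lop with continuous division, and the forward implication is definitional. The one wrinkle is your parenthetical claim that a countable network forces the $T_0$ axiom, which is false in general (an indiscrete countable group has a countable network); the paper instead disposes of the regularity clause via its standing convention that all topological lops are assumed to satisfy $T_0$ and are hence regular, and you should invoke that convention (or the standard fact that $T_0$ topological groups are Tychonoff) rather than the network.
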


Now let us present an example showing that Corollary~\ref{c:gab} does not hold for paratopological groups. We recall that the {\em Sorgenfrey line} is the real line $\IR$ endowed with the topology generated by the base consisting of half-intervals $[a,b)$, $a<b$. It is a classical example of a paratopological group which is not a topological group.

\begin{example} The Sorgenfrey line is a paratopological group having the following properties:
 \begin{enumerate}
 \item $X$ is first countable (and hence has the strong Pytkeev property);
 \item $X$ is separable but not cosmic (and hence not a $\Pyt_0$-space).
 \end{enumerate}
 \end{example}

On the other hand, Corollary~\ref{c:Pigroup} does not hold for quasitopological groups.

\begin{example}\label{cosmicex} There is a first countable cosmic quasitopological group $G$ which fails to be an $\aleph_0$-space.
\end{example}

\begin{proof} Let $\IQ$ be the additive group of rational numbers. Endow the group $G=\IR\times\IQ$ with the shift-invariant topology $\tau$ whose neighborhood base at zero $(0,0)$ consists of the sets
$$\maltese_\e=\{(0,0)\}\cup\{(x,y)\in\IR\times\IQ:|xy|<\e(x^2+y^2)<\e^2\}$$
where $\e>0$. It is easy to see that this topology is regular and the family $\{(a,b)\times\{q\}\colon a,b,q\in\IQ,\;a<b\}$ is a countable network for $G$. Since the topology $\tau$ is first countable and invariant under the inversion, the group $G=\IR\times\IQ$ endowed with the topology $\tau$ is a first countable cosmic quasitopological group.

It remains to show that $G$ has no countable $k$-network. Assume conversely that $\mathcal P$ is a countable $k$-network for $G$, closed under finite unions. Choose a positive $\e<\frac12$ such that  $\maltese_\e\subset\{(x,y)\in\IR\times\IQ: |y|\ge 2|x| \mbox{ or $|x|\ge 2|y|$}\}$. Next, choose any compact subset $K\subset\IQ$ such that $\{0\}\times K\subset\maltese_\e$ and for every $m\in\w$ the set $K$ meets the interval $(2^{-m-1},2^{-m})$.

Since $\mathcal P$ is a (closed under finite unions) $k$-network for $\IR\times\IQ$,
for every $x\in\IR$ there is a set $P_x\in\mathcal P$ such that $\{x\}\times K\subset P_x\subset (x,0)+\maltese_\e$. Since the family $\mathcal P$ is countable, by the Pigeonhole Principle,
for some $P\in\mathcal P$ the subset $X=\{x\in\IR:P_x=P\}\subset\IR$ is uncountable and hence contains two points $x,x'\in X$ with $|x-x'|<\e$. Find a unique $m\in\IN$ such that $2^{-m-1}<|x-x'|\le 2^{-m}$ and find a point $y\in K\cap (2^{-m-1},2^{-m})$. Then $(x',y)\in \{x'\}\times K\subset P_{x'}=P_x\subset(x,0)+\maltese_\e$ and hence $(x'-x,y)\in\maltese_\e$, which is not possible as
$$\frac12=\frac{2^{-m-1}}{2^{-m}}<\frac{|y|}{|x'-x|}<\frac{2^{-m}}{2^{-m-1}}=2$$according to the choice of $\e>0$.
\end{proof}

\begin{remark} The argument used in Example~\ref{cosmicex} gives a bit more. Namely, for any uncountable subgroup $H\subset \IR$ the subgroup $H\times\IQ$ of the quasitopological group $(\IR\times\IQ,\tau)$ constructed in Example~\ref{cosmicex} is a first countable cosmic quasitopological group failing to be an $\aleph_0$-space.
\end{remark}

Finally, we detect $\Pyt_0$-spaces among (locally) narrow topological lops.
First we recall the necessary definitions.

A subset $B$ of a topological lop $X$ is called
\begin{itemize}
\item {\em narrow} if for any neighborhood $U\subset X$ of the unit $e$, any infinite subset $A\subset B$ contains a point $a\in A$ such that the intersection $A\cap aU$ is infinite;
\item {\em bounded} if for any neighborhood $U\subset X$ of the unit $e$ there is a finite subset $F\subset B$ such that $B\subset FU$.
\end{itemize}
It is easy to see that each narrow subset $B$ in a topological lop $X$ is bounded.

A topological lop $X$ is called
\begin{itemize}
\item {\em locally narrow} if $X$ contains a narrow neighborhood of the unit $e$ of $X$;
\item {\em locally bounded} if $X$ contains a bounded neighborhood of the unit $e$ of $X$;
\item ({\em locally}) {\em precompact} if $X$ is topologically isomorphic to a sublop of a (locally) compact topological lop.
\end{itemize}
Next we recall three topological notions. A topological space $X$ is called
\begin{itemize}
\item {\em pseudocompact} if $X$ is Tychonoff and each continuous real-valued function on $X$ is bounded;
\item {\em countably-compact} if each infinite subset of $X$ has an accumulation point in $X$;
\item {\em locally countably-compact} if each point of $X$ has a countably-compact neighborhood in $X$.
\end{itemize}

For any topological lop these properties relate as follows:
$$
\xymatrix{
\mbox{pseudocompact}\ar@{=>}[d]&\mbox{countably-compact}\ar@{=>}[d]\\
\mbox{precompact}\ar@{=>}[d]\ar@{=>}[r]&\mbox{narrow}\ar@{=>}[d]\ar@{=>}[r]&\mbox{bounded}\ar@{=>}[d]\\
\mbox{locally precompact}\ar@{=>}[r]&\mbox{locally narrow}\ar@{=>}[r]&\mbox{locally bounded}\\
&\mbox{locally countably-compact}\ar@{=>}[u].
}
$$
\smallskip

Non-trivial implications from this diagram are proved in the following two propositions.

\begin{proposition}\label{p:lcclop} Each sublop $H$ of a (locally) countably-compact topological lop $G$ is (locally) narrow.
\end{proposition}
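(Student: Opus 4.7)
The plan is to handle the countably-compact case first, since the locally countably-compact case will reduce to it by restricting attention to a countably-compact neighborhood of the unit. The overall strategy mimics the classical argument showing that countably-compact subgroups of topological groups are totally bounded: countable compactness produces an accumulation point of an infinite set, and joint continuity of the division operation at the diagonal turns that accumulation point into a small $W^{-1}W$ neighborhood of the unit.

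For the countably-compact case, I would fix an infinite $A\subset H$ and a neighborhood $U\subset H$ of the unit $e$, aiming to find $a\in A$ with $A\cap aU$ infinite. Writing $U=V\cap H$ for some open neighborhood $V$ of $e$ in $G$, I invoke countable compactness of $G$ to extract an accumulation point $x\in G$ of $A$. Since $q(x,x)=x^{-1}x=e\in V$ and the division operation $q\colon G\times G\to G$ is jointly continuous at $(x,x)$, there is an open neighborhood $W\subset G$ of $x$ with $W^{-1}W\subset V$. The accumulation property forces $W\cap A$ to be infinite, so pick any $a\in W\cap A$. For every $z\in W\cap A$ the element $a^{-1}z$ lies in $V$ and, because $H$ is a sublop (so closed under division), also in $H$; hence $a^{-1}z\in V\cap H=U$, i.e.\ $z\in aU$. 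Therefore $A\cap aU\supset W\cap A$ is infinite, proving that $H$ is narrow.

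For the locally countably-compact case, I would take a countably-compact neighborhood $V_0\subset G$ of $e$ and set $W_0:=V_0\cap H$, which is a neighborhood of $e$ in $H$. I claim $W_0$ itself is narrow in $H$: given an arbitrary infinite $A\subset W_0$, countable compactness of $V_0$ supplies an accumulation point $x\in V_0\subset G$ of $A$, and the identical continuity-of-division argument then produces the required $a\in A$ with $A\cap aU$ infinite. Hence $H$ has a narrow neighborhood of its unit, i.e.\ $H$ is locally narrow.

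The only delicate point — and the main obstacle — is the single continuity step: I really need joint continuity of $q$ at the diagonal point $(x,x)$, not merely separate continuity of the shifts, to secure $W^{-1}W\subset V$. It is exactly this step that converts the qualitative statement ``infinitely many points of $A$ cluster near $x$'' into the quantitative statement ``infinitely many points of $A$ fit inside a single left translate $aU$''. This is why the hypothesis ``topological lop'' (with both $p$ and $q$ jointly continuous), rather than merely semitopological or paratopological, is indispensable.
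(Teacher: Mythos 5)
Your argument is correct and matches the paper's proof in all essentials: countable compactness yields an accumulation point $b$ of the infinite set $A$, joint continuity of the division $q$ at $(b,b)$ yields a neighborhood $O_b$ of $b$ with $O_b^{-1}O_b\subset U$, and any $a\in A\cap O_b$ then satisfies $A\cap O_b\subset A\cap aU$. The only cosmetic difference is in the local case, where the paper first shrinks to a neighborhood $W$ of $e$ with $W^{-1}W\subset K$ for a countably-compact neighborhood $K$, while you work directly with $V_0\cap H$; both devices serve only to keep $A$ inside a countably-compact set.
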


\begin{proof} Let $K$ be a countably-compact neighborhood of the unit $e$. If the space $G$ is countably compact, then we shall assume that $K=G$.  Since $e^{-1}e=e\in G$, by the continuity of the division $q:G\times G\to G$, $q:(x,y)\mapsto x^{-1}y$, there is a neighborhood $W\subset G$ of $e$ such that $W^{-1}W\subset K$. If $K=G$, then we put $W=G$. We claim that the neighborhood $W$  witnesses that the topological lop $G$ is (locally) narrow. Given a neighborhood $U\subset G$ of $e$ and an infinite subset $A\subset W$, we need to find a point $a\in A$ such that $A\cap aU$ is infinite. By the countable compactness of $K$, the set $A$ has an accumulation point $b\in K$. Since $b^{-1}b=e$, the continuity of the division operation $q:(x,y)\mapsto x^{-1}y$ yields a neighborhood $O_b\subset X$ of $b$ such that $O_b^{-1}O_b\subset U$. Fix any point $a\in A\cap O_b$ and observe that $a^{-1}O_b\subset U$ and hence $O_b\subset aU$. Since $b$ is an accumulation point of the set $A$, the intersection $A\cap O_b\subset A\cap aU$ is infinite.
\end{proof}

\begin{proposition} If $X$ is a pseudocompact topological lop, then
\begin{enumerate}
\item the Stone-\v Cech compactification $\beta X$ of $X$ has the structure of a topological lop
containing $X$ as a dense sublop;
\item each sublop of $X$ is narrow.
\end{enumerate}
\end{proposition}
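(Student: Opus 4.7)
The strategy for (1) is to mirror the classical Comfort--Ross theorem for pseudocompact topological groups: promote $\beta X$ to a topological lop containing $X$ as a dense sublop. The crucial intermediate step is to show that $X\times X$ is again pseudocompact, for then Glicksberg's theorem yields $\beta(X\times X)=\beta X\times\beta X$, and the continuous maps $p\colon X\times X\to X\subset\beta X$ and $q\colon X\times X\to X\subset\beta X$ admit unique continuous extensions $\tilde p,\tilde q\colon \beta X\times\beta X\to\beta X$. The lop identities $xe=x=ex$ and $x(x^{-1}y)=y=x^{-1}(xy)$ hold on the dense subset $X\times X$ and therefore propagate to $\beta X\times\beta X$ by continuity, making $\beta X$ a topological lop with $X$ as a dense sublop.

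To establish pseudocompactness of $X\times X$ I would first verify that $X$ is precompact in its left uniformity $\mathcal U$, whose basic entourages are $V_U=\{(x,y)\in X\times X:x^{-1}y\in U\}$ as $U$ ranges over neighborhoods of the unit $e$. If $\mathcal U$ were not totally bounded, some such $U$ would admit no finite cover $X=\bigcup_{i\le n}a_iU$. Choosing a neighborhood $W$ of $e$ with $W^{-1}W\subset U$ via joint continuity of $q$ at $(e,e)$, an inductive construction produces a sequence $(x_n)$ in $X$ with $x_n\notin\bigcup_{k<n}x_kU$, and then the translates $x_nW$ are pairwise disjoint, contradicting pseudocompactness in the standard way. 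Once $\mathcal U$ is totally bounded, a Comfort--Ross-style argument adapts: every $f\in C(X)$ becomes $\mathcal U$-uniformly continuous, which forces the product left uniformity on $X\times X$ to be totally bounded and pseudocompact. The main obstacle is precisely this step: in the group case the argument pivots on inversion, whereas for lops only the left shifts are homeomorphisms, so one must rely on the joint continuity of $q$ (built into the definition of a topological lop) as the sole replacement for the symmetry afforded by inversion.

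Part (2) is then a short consequence of (1) together with Proposition~\ref{p:lcclop}. Since $\beta X$ is compact, hence countably compact, and any sublop $H$ of $X$ is automatically a sublop of $\beta X$, Proposition~\ref{p:lcclop} shows that $H$ is narrow in $\beta X$. To transfer narrowness to the ambient space $X$, fix an open neighborhood $U\subset X$ of $e$ and write $U=V\cap X$ for some open $V\subset\beta X$ with $e\in V$. Given any infinite $A\subset H$, the narrowness of $H$ in $\beta X$ yields $a\in A$ such that $A\cap aV$ is infinite. For any $av\in aV\cap X$ with $a\in X$, the element $v=a^{-1}(av)$ computed in $\beta X$ in fact lies in $X$ because $X$ is a sublop of $\beta X$, so $v\in V\cap X=U$ and hence $av\in aU$. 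Therefore $A\cap aV\subset A\cap aU$ is infinite, proving that $H$ is narrow in $X$.
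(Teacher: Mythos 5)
Your overall architecture for part (1) --- pseudocompactness of $X\times X$, then Glicksberg's theorem to identify $\beta(X\times X)$ with $\beta X\times\beta X$, extension of $p$ and $q$, and propagation of the lop identities from the dense set $X\times X$ to the whole product --- matches the paper, and your part (2) is essentially the paper's argument (your explicit transfer of narrowness from $\beta X$ down to $X$ is a welcome detail the paper leaves implicit). The problem is the step you yourself flag as the main obstacle: your proof that $X\times X$ is pseudocompact does not go through for lops. First, the family of sets $V_U=\{(x,y):x^{-1}y\in U\}$ need not be a base of a uniformity: the composition axiom asks you to control $x^{-1}z$ by $x^{-1}y$ and $y^{-1}z$, which uses associativity. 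Second, the disjointness argument fails: from $x_nw=x_mw'$ with $w,w'\in W$ you would need to cancel on the \emph{right} to conclude $x_n\in x_mU$, but in a lop only the left shifts are bijective, so $x_nW\cap x_mW\ne\emptyset$ yields nothing like $x_n\in x_m(WW^{-1})$. Third, even granting total boundedness, the Comfort--Ross passage from ``totally bounded'' to ``products of pseudocompact spaces are pseudocompact'' runs through the Weil completion, a compact group in which the original group is $G_\delta$-dense; you provide no analogue of that completion for a nonassociative lop, and a totally bounded uniformity by itself does not give pseudocompactness of products. So the central step of (1) is left unproved.

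The paper sidesteps all of this with a single observation: the map $f:X^3\to X$, $f(x,y,z)=x(y^{-1}z)$, satisfies $f(x,y,y)=x=f(y,y,x)$, so it is a continuous Malcev operation and $X$ is a pseudocompact Malcev space; by the Reznichenko--Uspenskij theorem \cite[1.6]{RU}, Tychonoff products of pseudocompact Malcev spaces are pseudocompact, hence so is $X\times X$. That one citation replaces your entire uniformity discussion and is precisely the missing ingredient. Once it is in place, the rest of your write-up (Glicksberg, the continuous extensions $\bar p,\bar q$, the closed set of points satisfying the lop identities containing the dense set $X\times X$, and part (2) via Proposition~\ref{p:lcclop} applied to sublops of the compact lop $\beta X$) is correct and coincides with the paper's proof.
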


\begin{proof} The operation $f:X^3\to X$, $f:(x,y,z)\mapsto x(y^{-1}z)$, has the properties $f(x,y,y)=x=f(y,y,x)$, which means that $f$ is a continuous Malcev operation on $X$ and $X$ is a Malcev space.
Since the Tychonoff product of pseudocompact Malcev space is pseudocompact \cite[1.6]{RU}, the product $X\times X$ is pseudocompact. Let $p:X\times X\to X$ and $q:X\times X\to X$ be the multiplication and division operations on the lop. Since $X\times X$ is pseudocompact, by
Glicksberg Theorem \cite[3.12.20(c)]{Eng}, the continuous map $\beta(X\times X)\to\beta X\times \beta X$ extending the identity embedding $X\times X\to\beta X\times \beta X$ is a homeomorphism. Consequently, the multiplication and division operations $p,q:X\times X\to X$ extend to continuous binary operations $\bar p:\beta X\times \beta X\to \beta X$ and $\bar q:\beta X\times \beta X\to\beta X$ on the Stone-\v Cech extension $\beta X$ of $X$. It follows that the closed subset
$$D=\{(x,y)\in\beta X\times\beta X:\bar p(x,e)=x=\bar p(e,x)\mbox{ \ and \ }\bar p(x,\bar q(x,y))=y=\bar q(x,\bar p(x,y))\}$$ contains the dense subset $X\times X$ of $\beta X\times\beta X$, and hence $D=\beta X\times \beta X$. This means that the operations $\bar p$ and $\bar q$ turn $\beta X$ into a compact topological lop.

By Proposition~\ref{p:lcclop}, any sublop $H\subset X$ is narrow (being a sublop of the compact topological lop $\beta X$).
\end{proof}

It is easy to see that a sublop of a locally narrow topological lop is locally narrow. We do not know if a similar property holds for locally bounded topological lops. But for topological groups we have the following equivalence (see \cite[3.7.I]{ArT}).

\begin{proposition} For any topological group $G$ the following conditions are equivalent:
\begin{enumerate}
\item $G$ is (locally) precompact;
\item $G$ is (locally) narrow;
\item $G$ is (locally) bounded.
\end{enumerate}
\end{proposition}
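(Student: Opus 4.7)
The implications $(1)\Ra(2)\Ra(3)$ and their local versions are already in hand: $(1)\Ra(2)$ is Proposition~\ref{p:lcclop} applied to a (locally) compact overgroup, and $(2)\Ra(3)$ is noted just after the definition of narrow subsets. So the plan is to prove only $(3)\Ra(1)$, and here the group structure is essential. The tool I would use is the Raikov completion $\widehat G$ of $G$, i.e.\ the completion of $G$ with respect to its two-sided uniformity (see \cite[\S3.6]{ArT}): $\widehat G$ is a complete Hausdorff topological group containing $G$ as a dense topological subgroup, and a subset $A\subset G$ has compact closure in $\widehat G$ if and only if $A$ is totally bounded in the two-sided uniformity. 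Thus everything comes down to upgrading the paper's one-sided notion of ``bounded'' to two-sided total boundedness.

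For $(3)\Ra(1)$ in the global case, I would fix a neighbourhood $U\ni e$ and choose a symmetric $U_1\ni e$ with $U_1U_1\subset U$. By (3) there is a finite $F_1\subset G$ with $G\subset F_1U_1$; inverting (and using $G=G^{-1}$, $U_1=U_1^{-1}$) gives $G\subset U_1F_1^{-1}$, a right-net. For each pair $(f_1,f_2)\in F_1\times F_1^{-1}$ with $f_1U_1\cap U_1f_2\ne\emptyset$ I would pick a representative $y_{f_1,f_2}$ of that intersection; these finitely many points form a two-sided $U$-net in $G$. Hence $G$ is two-sided totally bounded, $\widehat G$ is compact, and $G$ embeds into a compact group, witnessing precompactness.

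For the local case I would begin with a bounded neighbourhood $V\ni e$ and replace it by the interior of $V\cap V^{-1}$ to make it open, symmetric and still bounded. The previous paragraph's argument, applied to $V$ in place of $G$, then shows that $V$ is two-sided totally bounded, so its closure $\overline V^{\widehat G}$ is compact in $\widehat G$. Writing $V=G\cap O$ with $O\subset\widehat G$ open and using density of $G$ in $\widehat G$, one obtains $O\subset\overline V^{\widehat G}$, hence $\overline V^{\widehat G}$ is a compact neighbourhood of $e$ in $\widehat G$; thus $\widehat G$ is locally compact and $G$ is locally precompact. The main (really only non-routine) step is the symmetrization trick turning a one-sided $U$-net into a two-sided one; everything else is a direct appeal to the Raikov completion.
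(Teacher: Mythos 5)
Your argument is correct, but note that the paper does not actually prove this proposition: it is stated with a bare reference to \cite[3.7.I]{ArT}, so there is no internal proof to compare against. What you supply is essentially the standard argument that lies behind that citation. You rightly isolate $(3)\Rightarrow(1)$ as the only implication needing the group structure, and your two steps --- (i) the symmetrization trick converting a left $U_1$-net $F_1$ and the right net $F_1^{-1}$ obtained by inversion into a two-sided $U$-net of representatives $y_{f_1,f_2}\in f_1U_1\cap U_1f_2$ (indeed $y^{-1}x\in U_1U_1\subset U$ and $xy^{-1}\in U_1U_1\subset U$ for every $x\in f_1U_1\cap U_1f_2$), and (ii) passage to the Ra\u\i kov completion $\widehat G$, which is compact, resp.\ locally compact, precisely when $G$, resp.\ a neighborhood of $e$, is totally bounded in the two-sided uniformity --- are exactly right; the density argument showing $O\subset\overline{V}^{\widehat G}$ for the open set $O$ with $V=G\cap O$ is also sound. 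Two small points are worth making explicit. First, the paper's definition of boundedness insists that the finite net $F$ lie \emph{inside} the bounded set $B$, so the claim that the interior of $V\cap V^{-1}$ is ``still bounded'' needs the (easy but not totally vacuous) observation that subsets of bounded sets are bounded under this convention: given $B'\subset B\subset FU_1$ with $F\subset B$ and $U_1$ symmetric, $U_1U_1\subset U$, replace each $f\in F$ with $fU_1\cap B'\ne\emptyset$ by a point $b_f$ of that intersection to get a net $F'\subset B'$ with $B'\subset F'U$. Second, the Ra\u\i kov completion requires $G$ to be Hausdorff; this is covered by the paper's standing assumption that all topological lops (hence groups) satisfy $T_0$ and are therefore regular, but you should say so. With these remarks added, your proof is a complete and self-contained substitute for the external reference.
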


The following theorem detects $\Pyt_0$-spaces among locally narrow topological lops.

\begin{theorem}\label{t:lntlop} A locally narrow topological lop $X$ is a $\Pyt_0$-space if and only if $X$ is metrizable and separable.
\end{theorem}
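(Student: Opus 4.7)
The ``if'' direction is immediate: a countable open base in a separable metrizable space is already a countable Pytkeev network, so such a space is a $\Pyt_0$-space.

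For the ``only if'' direction, assume $X$ is a locally narrow topological lop that is a $\Pyt_0$-space. By Proposition~\ref{p1.2}, $X$ is cosmic and hence separable; it remains to show $X$ is metrizable. My plan is to prove $X$ is first countable at the unit $e$; then the continuous left shifts $L_x:X\to X$ (homeomorphisms sending $e$ to $x$) transport a countable neighborhood base at $e$ to one at every point, making $X$ first countable. In a regular space one has $w(X)\leq nw(X)\cdot\chi(X)=\aleph_0$, so $X$ is second countable, hence metrizable by Urysohn's theorem.

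To prove first countability at $e$, fix a narrow neighborhood $V$ of $e$ and a countable Pytkeev network $\mathcal N$ at $e$, which we may assume is closed under finite intersections and whose members all contain $e$ and lie in $V$. Consider the countable subfamily $\mathcal N^\circ=\{N\in\mathcal N:e\in\operatorname{int}(N)\}$ of members that are themselves neighborhoods of $e$; it is closed under finite intersections. I will show $\mathcal N^\circ$ is a neighborhood base at $e$. Suppose for contradiction that some open neighborhood $O\subset V$ of $e$ contains no member of $\mathcal N^\circ$. Then every $N\in\mathcal N$ with $e\in N\subset O$ fails to be a neighborhood of $e$, and by $T_1$ the set $O\setminus N$ accumulates at $e$. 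By continuity of the division $q(x,y)=x^{-1}y$ at $(e,e)$, pick an open neighborhood $W$ of $e$ with $W^{-1}W\subset O$.

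The plan is now to construct recursively an infinite $W$-separated sequence $(a_k)_{k\in\w}\subset O$, i.e.\ with $a_j^{-1}a_k\notin W$ for $j\neq k$, and derive a direct contradiction with narrowness. At step $k$, with $a_0,\dots,a_{k-1}$ already selected, set $M_k=N_0\cap\cdots\cap N_{k-1}\in\mathcal N$; by hypothesis $M_k$ is not a neighborhood of $e$, so $O\setminus M_k$ accumulates at $e$, and the Pytkeev property yields $N_k\in\mathcal N$ with $e\in N_k\subset O$ and $N_k\cap(O\setminus M_k)$ infinite. From this infinite candidate one selects $a_k$ avoiding the finite union $\bigcup_{j<k}a_jW$. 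The resulting set $A=\{a_k:k\in\w\}\subset V$ is infinite with $A\cap a_kW=\{a_k\}$ for every $k$, contradicting narrowness, which would require some $a\in A$ with $A\cap aW$ infinite.

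The principal obstacle is justifying at each stage that the candidate set $N_k\cap(O\setminus M_k)$ is not covered by $\bigcup_{j<k}a_jW$, so that $a_k$ can in fact be chosen. This rests on the fact that the candidate accumulates at $e$, that each $a_jW$ is open, and that the continuity of the lop operations at $e$ together with the $T_1$ accumulation machinery ensure the candidate contains points escaping the finitely many previously chosen translates of $W$---the delicate interplay at the heart of the argument.
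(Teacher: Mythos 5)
Your reduction to first countability at the unit is the right strategic move, but both halves of your argument have problems, and the central one is a genuine gap. The recursion is the heart of the matter and you have not closed it: the Pytkeev property only tells you that the candidate set $N_k\cap(O\setminus M_k)$ is \emph{infinite}, not that it accumulates at $e$, and an infinite set can perfectly well be swallowed by the finitely many open translates $a_0W,\dots,a_{k-1}W$ (note that nothing prevents $a_jW$ from being a neighborhood of $e$, so even accumulation at $e$ would not rescue the selection). No amount of ``continuity of the lop operations at $e$'' fills this in. The paper's proof avoids the problem by chasing a different invariant. It takes the Pytkeev network to consist of \emph{closed} sets and splits it into the nowhere dense members $\mathcal N'=\{N'_m\}_{m\in\w}$ and the rest; the separated sequence is then built by choosing $a_n\in W\setminus\bigcup_{k,m<n}a_kN'_m$, which is always possible because a finite union of nowhere dense sets cannot cover the nonempty open set $\operatorname{int}W$ --- this is exactly the covering obstruction your version cannot overcome. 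Narrowness is then used \emph{positively}, to show that $B=\{a_k^{-1}a_n:k<n\}$ accumulates at $e$, while $B\cap N'_m$ is finite for every $m$; hence the Pytkeev witness inside any neighborhood $U$ of $e$ must be a closed set $N$ with nonempty interior, and $N^{-1}N$ is then a neighborhood of $e$ contained in $U^{-1}U$. Your target --- that some member of $\mathcal N$ is itself a neighborhood of $e$ --- is stronger than what is needed and is not what the argument can deliver; ``nonempty interior somewhere'' plus the $N^{-1}N$ trick is what makes the proof close.

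The final step is also wrong as stated: the inequality $w(X)\le nw(X)\cdot\chi(X)$ fails for regular spaces, and the paper itself exhibits a counterexample --- the quasitopological group of Example~\ref{cosmicex} is first countable and cosmic but not an $\aleph_0$-space, hence not second countable. This step is repairable in two ways: either observe that first countability implies countable fan tightness and invoke Theorem~\ref{t1.6} to get second countability, or do what the paper does and quote Gul'ko's theorem that a first-countable rectifiable space (equivalently, topological lop) is metrizable, with separability coming from cosmicity. But the repair of this step does not save the proof, since the construction of the countable base at $e$ is where the argument genuinely breaks down.
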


\begin{proof} The ``if'' part is trivial. To prove the ``only if'' part, assume that $X$ is a $\Pyt_0$-space. Since the topological lop $X$ is locally narrow, there exists a neighborhood $W$ of the unit $e$ in $X$ such any for any neighborhood $U\subset X$ of $e$, each infinite set $A\subset W$ contains a point $a\in A$ such that $A\cap aU$ is infinite.

Fix any countable Pytkeev network $\mathcal N$ for $X$.
Since the $\Pyt_0$-space $X$ is regular, we can replace each set $N\in\mathcal N$ by its closure and assume that the Pytkeev network consists of closed subsets of $X$.
Consider the subfamily $\mathcal N'=\{N\in \mathcal N: N$ is nowhere dense in $X\}$ and let $\mathcal N'=\{N_k'\}_{k\in\w}$ be an enumeration of $\mathcal N'$. Use the nowhere density of the sets $N_k'$ to construct a sequence $A=\{a_k\}_{k\in\w}\subset W$ such that $a_n\in W\setminus\bigcup_{k,m<n}a_kN_m'$ for every $n\in\w$. We claim that the set $B=\{a_k^{-1}a_n:k<n\}\subset X$ accumulates at $e$. Indeed, given any neighborhood $U\subset X$ of $e$, we can find a point $a_k\in A$ such that the set  $A\cap a_kU$ is infinite. Then for every point $a_m\in A\cap a_kU$ with $m>k$ we get $a_k^{-1}a_m\in U\cap B$, which means that $B$ accumulates at $e$.
Observe that for any $m\in\w$ the set $B\cap N'_m\subset \{a_k^{-1}a_n:k<n\le m\}$ is finite.
This implies that the family $\mathcal N'=\{N_m\}_{m\in\w}$ is not a Pytkeev network at $e$.
Taking into account that $\mathcal N$ is a Pytkeev network at $e$, we conclude that each neighborhood $U\subset X$ of $e$ contains a set $N\in\mathcal N\setminus \mathcal N'$. Observe that each (closed) set $N\in\mathcal N\setminus\mathcal N'$ has non-empty interior and hence $N^{-1}N$ is a neighborhood of $e$. Then $\mathcal B_e=\{N^{-1}N:N\in\mathcal N\setminus\mathcal N'\}$ is a countable neighborhood base at $e$. So, $X$ is first countable at $e$. By \cite{Gul} the topological lop $X$, being rectifiable and first countable, is metrizable. Being cosmic, the space $X$ is separable.
\end{proof}

\begin{corollary}\label{c:lntgr} A locally narrow topological lop $X$ is metrizable and separable if and only if $X$ contains a dense sublop which is a $\Pyt_0$-space.
\end{corollary}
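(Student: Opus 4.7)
The ``only if'' direction is immediate: since every separable metrizable space is a $\Pyt_0$-space, the lop $X$ itself serves as its own dense sublop with the required property. For the ``if'' direction, the strategy is to transfer metrizability and separability from the dense sublop $H$ up to $X$ in two moves: first by applying Theorem~\ref{t:lntlop} to $H$, and then by using the rectifiable structure of $X$ to extend first countability from $H$ to the whole space.

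The opening step is the observation (noted earlier in the discussion preceding Theorem~\ref{t:lntlop}) that $H$, being a sublop of the locally narrow topological lop $X$, is itself locally narrow. Since $H$ is also a $\Pyt_0$-space by hypothesis, Theorem~\ref{t:lntlop} yields that $H$ is metrizable and separable. Separability of $X$ is then automatic, as $X$ contains the dense separable subspace $H$.

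The main step, and the one I expect to require real care, is to deduce that $X$ is first countable at its unit $e$. Given a countable neighborhood base $\{V_n\}_{n\in\w}$ at $e$ in $H$, I would pick open sets $W_n\subset X$ with $W_n\cap H=V_n$ and argue that $\{W_n\}_{n\in\w}$ is a neighborhood base at $e$ in $X$. Given any open neighborhood $U\subset X$ of $e$, the regularity of $X$ produces an open $V$ with $e\in V\subset\bar V\subset U$; some $V_n$ then lies inside $V\cap H$, and since $H$ is dense in $X$ and $W_n$ is open, the set $W_n\cap H$ is dense in $W_n$, whence $W_n\subset\overline{W_n\cap H}^X\subset\bar V\subset U$. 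Once first countability at $e$ is in hand, the result of \cite{Gul} cited in the proof of Theorem~\ref{t:lntlop}, which says that a rectifiable first-countable $T_0$-space is metrizable, finishes the job and gives that $X$ is metrizable.
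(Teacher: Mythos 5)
Your proposal is correct and follows essentially the same route as the paper: reduce to $H$ via the hereditary nature of local narrowness, apply Theorem~\ref{t:lntlop} to get $H$ metrizable and separable, transfer first countability at $e$ from the dense subspace $H$ to the regular space $X$, and invoke Gul'ko's theorem that a first countable rectifiable $T_0$-space is metrizable. The only difference is that you spell out the density/regularity argument for first countability at $e$, which the paper leaves implicit; that elaboration is accurate.
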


\begin{proof} The ``only if'' part is trivial. To prove the ``if'' part, assume that a locally narrow topological lop $X$ contains a dense sublop $H$ which is a $\Pyt_0$-space. The topological lop $H$ is locally narrow (as a sublop of a locally narrow topological lop). By Theorem~\ref{t:lntlop}, the topological lop $H$ is metrizable and separable. Taking into account that the first countable space $H$ is dense in the regular space $X$, we can conclude that the space $X$ is first countable at $e$. By \cite{Gul}, the topological space $X$, being a first countable rectifiable space, is metrizable.
Moreover the space $X$ is separable (since $X$ contains the dense separable subspace $H$).
\end{proof}

For locally precompact topological groups Corollary~\ref{c:lntgr} implies the following metrizability criterion.

\begin{corollary}[Gabriyelyan] A locally precompact topological group $G$ is metrizable and separable if and only if $G$ contains a dense subgroup which is a $\Pyt_0$-space.
\end{corollary}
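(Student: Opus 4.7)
The plan is to derive this corollary directly from Corollary~\ref{c:lntgr} together with the equivalence of local precompactness and local narrowness for topological groups (the proposition immediately preceding Theorem~\ref{t:lntlop}). The only ingredients needed are the translations ``topological group is a topological lop'' and ``subgroup is a sublop'', which are immediate from the definitions.

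For the ``only if'' direction, if $G$ is metrizable and separable, then $G$ itself is a separable metrizable space, hence a $\Pyt_0$-space (as noted in the abstract and Section~\ref{s1}), and so $G$ contains itself as a dense subgroup which is a $\Pyt_0$-space. This direction is trivial and takes a single sentence.

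For the ``if'' direction, suppose $G$ is a locally precompact topological group containing a dense subgroup $H$ which is a $\Pyt_0$-space. Since $G$ is a topological group (in particular an associative topological lop) and since local precompactness is equivalent to local narrowness for topological groups (by the cited proposition), $G$ is a locally narrow topological lop. The subgroup $H$ is in particular a sublop of $G$, and it is dense in $G$ by assumption. Therefore all hypotheses of Corollary~\ref{c:lntgr} are satisfied, and we conclude that $G$ is metrizable and separable.

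There is essentially no obstacle: the proof is a straightforward specialization of Corollary~\ref{c:lntgr} to topological groups, using only that the class of topological groups sits inside the class of topological lops and that ``locally precompact'' coincides with ``locally narrow'' in the group setting. No further argument is required.
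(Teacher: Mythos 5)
Your proof is correct and is exactly the derivation the paper intends: the corollary is stated as an immediate consequence of Corollary~\ref{c:lntgr} together with the preceding proposition identifying local precompactness with local narrowness for topological groups, and your specialization (group $\Rightarrow$ lop, subgroup $\Rightarrow$ sublop, trivial ``only if'' direction) fills in the same routine details. Nothing further is needed.
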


\section{Detecting $\Pyt_0$-spaces among rectifiable and continuously homogeneous spaces}\label{s5}

Topological lops are tightly connected with rectifiable spaces, well-known in General Topology (see \cite[\S4]{Ar02}).

A topological space $X$ is called {\em rectifiable} if there is a point $e\in X$ and a homeomorphism $H:X\times X\to X\times X$ such that $H(x,e)=(x,x)$ and $H(\{x\}\times X)=\{x\}\times X$ for every $x\in X$.
The notion of rectifiable space was introduced by Arhangelski (in a Moscow topological seminar) and was studied in \cite{Gul}, \cite{Us}, \cite{Ar02}. Network properties of rectifiable spaces were studied in \cite{Lin}, \cite{LLL}, \cite{LS}. By Corollary 2.2 \cite{Gul}, each rectifiable $T_0$-space is regular.

It follows that each rectifiable space is topologically homogeneous.
Let us recall that a topological space $X$ is {\em topologically homogeneous} if for any points $x,y\in X$ there exists a homeomorphism $h_{x,y}:X\to X$ such that $h_{x,y}(x)\to y$. If the homeomorphism $h_{x,y}$ can be chosen to depend continuously on $x$ and $y$ (in the sense that the map $:X^3\to X^3$, $H:(x,y,z)\mapsto (x,y,h_{x,y}(z))$ is a homeomorphism), then the space $X$ is called {\em continuously homogeneous}. More precisely, a topological space $X$ is {\em continuously homogeneous} if there exists a homeomorphism $H:X^3\to X^3$ such that $H(x,y,x)=(x,y,y)$ and $H(\{(x,y)\}\times X)=\{(x,y)\}\times X$ for any points $x,y\in X$. Continuously homogeneous spaces were introduced by Uspenski\u\i\ \cite{Us} (who called them strongly homogeneous spaces). By \cite[Proposition 15]{Us}, a topological space $X$ is continuously homogeneous if and only if it is rectifiable.

The following simple proposition proved in \cite{BR} shows that topologically the notions of rectifiable space, continuously homogeneous space, topological left-loop and topological lop all are equivalent.

\begin{proposition}\label{p:reclop} For a topological space $X$ the following conditions are equivalent:
\begin{itemize}
\item $X$ is rectifiable;
\item $X$ is homeomorphic to a topological left-loop;
\item $X$ is homeomorphic to a topological lop;
\item $X$ is continuously homogeneous.
\end{itemize}
\end{proposition}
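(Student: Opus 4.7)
The plan is to prove the four-way equivalence by closing a short cycle of implications. The implication ``topological lop $\Rightarrow$ topological left-loop'' is immediate from the definitions, since a topological lop is defined as a topological left-loop satisfying the additional axiom $ex=x$. The equivalence ``rectifiable $\Leftrightarrow$ continuously homogeneous'' is exactly Proposition 15 of \cite{Us} and can be cited directly. Thus it suffices to establish (topological left-loop $\Rightarrow$ rectifiable) and (rectifiable $\Rightarrow$ topological lop), which I will treat in turn.

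For ``topological left-loop $\Rightarrow$ rectifiable'', given continuous operations $p(x,y)=xy$ and $q(x,y)=x^{-1}y$ on $X$ with unit $e$, I would define $H\colon X\times X\to X\times X$ by $H(x,y)=(x,xy)$. The identities $x(x^{-1}y)=y=x^{-1}(xy)$ imply that $(x,y)\mapsto(x,x^{-1}y)$ is a two-sided inverse of $H$; both maps are continuous by continuity of $p$ and $q$. The axiom $xe=x$ gives $H(x,e)=(x,x)$, and $H$ clearly preserves each fiber $\{x\}\times X$.

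For ``rectifiable $\Rightarrow$ topological lop'', take a rectifying homeomorphism $H\colon X\times X\to X\times X$ with $H(x,e)=(x,x)$ and $H(\{x\}\times X)=\{x\}\times X$. Write $H(x,y)=(x,p(x,y))$ and $H^{-1}(x,y)=(x,q(x,y))$: continuity of $H$ and $H^{-1}$ makes $p,q$ continuous, the fiber condition makes $p(x,\cdot)$ a bijection with inverse $q(x,\cdot)$, and $H(x,e)=(x,x)$ yields $p(x,e)=x$. Hence $(X,p,q,e)$ is already a topological left-loop. To upgrade it to a topological lop, the only missing identity is $p(e,y)=y$. Setting $\varphi(y):=p(e,y)$, the map $\varphi$ is the left shift $L_e$, hence a self-homeomorphism of $X$ (with inverse $y\mapsto q(e,y)$), and $\varphi(e)=p(e,e)=e$, so $\varphi^{-1}(e)=e$. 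I would then define new continuous operations
$$x\star y := p\bigl(x,\varphi^{-1}(y)\bigr),\qquad x\star^{-1} y := \varphi\bigl(q(x,y)\bigr).$$
A direct calculation using the left-loop axioms yields $x\star e=x$, $e\star y=y$, $x\star(x\star^{-1}y)=y=x\star^{-1}(x\star y)$, and each left shift $y\mapsto x\star y=L_x\circ\varphi^{-1}$ is a bijection. Consequently $(X,\star,\star^{-1},e)$ is a topological lop on the same underlying topological space $X$, showing that $X$ is homeomorphic to a topological lop.

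The only subtle point is the reparametrization in the last paragraph: one must identify the correct twist $\varphi=L_e$ so that the new left-unit law $e\star y=y$ holds without spoiling the right-unit law or the inverse identities, and one must check that the division $\star^{-1}$ remains continuous. Once $\varphi$ is chosen, every remaining step is either a definitional unpacking or a routine verification inherited from the continuity of $p$, $q$, and $\varphi^{\pm 1}$.
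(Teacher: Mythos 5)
Your argument is correct. Note that the paper itself gives no proof of this proposition: it is quoted from \cite{BR}, and the equivalence of rectifiability with continuous homogeneity is attributed to \cite[Proposition 15]{Us}, exactly the citation you use. Your cycle (lop $\Rightarrow$ left-loop $\Rightarrow$ rectifiable $\Rightarrow$ lop, plus the cited equivalence with continuous homogeneity) is the standard argument and closes the four-way equivalence. The passage between a rectification $H(x,y)=(x,p(x,y))$, $H^{-1}(x,y)=(x,q(x,y))$ and the pair of continuous operations $(p,q)$ with $p(x,e)=x$ and $p(x,q(x,y))=y=q(x,p(x,y))$ is a direct unpacking of the definitions in both directions. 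The only genuinely non-definitional step --- the twist $x\star y=p(x,\varphi^{-1}(y))$ with $\varphi=L_e$ that repairs the left-unit law --- is carried out correctly: $\varphi(e)=p(e,e)=e$ guarantees that the right-unit law $x\star e=x$ survives the reparametrization, $e\star y=\varphi(\varphi^{-1}(y))=y$ gives the left-unit law, and the displayed $x\star^{-1}y=\varphi(q(x,y))$ is continuous and inverts each left shift $L_x\circ\varphi^{-1}$, so $(X,\star,\star^{-1},e)$ is a topological lop on the same underlying space. The one point worth making explicit, which you leave implicit, is that rectifiability and continuous homogeneity are homeomorphism-invariant, so that ``homeomorphic to a topological left-loop'' rather than ``is a topological left-loop'' causes no trouble; this is a one-line remark.
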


Proposition~\ref{p:reclop} combined with Theorem~\ref{t:Saaklop} implies the following characterization (proved in the realm of topological groups by Gabriyelyan):

\begin{corollary}\label{c:recsak} A rectifiable space $X$ is a $\Pyt_0$-space if and only if $X$ is a separable $T_0$-space with the strong Pytkeev property.
\end{corollary}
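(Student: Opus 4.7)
The plan is to reduce the statement to Theorem~\ref{t:Saaklop} via the equivalence between rectifiable spaces and topological lops provided by Proposition~\ref{p:reclop}.

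For the ``only if'' direction, assume $X$ is a $\Pyt_0$-space, and let $\mathcal N$ be a countable Pytkeev network for $X$. By definition $X$ is regular, hence satisfies the separation axiom $T_0$. By Proposition~\ref{p1.2}, $X$ is an $\aleph_0$-space and in particular is cosmic, so $X$ is separable. Moreover, for each point $x\in X$ the countable family $\mathcal N$ itself witnesses the Pytkeev property at $x$, so $X$ has the strong Pytkeev property. This handles the easy direction.

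For the ``if'' direction, assume $X$ is a separable rectifiable $T_0$-space with the strong Pytkeev property. By Proposition~\ref{p:reclop}, $X$ is homeomorphic to a topological lop $L$. Since the properties of being separable, $T_0$, and having the strong Pytkeev property are topological invariants, $L$ is a separable $T_0$ topological lop with the strong Pytkeev property. By the remark following the definition of topological lops (which in turn uses Corollary~2.2 of \cite{Gul} together with Proposition~\ref{p:reclop}), any $T_0$ topological lop is regular, so $L$ is a regular topological space. Now Theorem~\ref{t:Saaklop} applied to $L$ yields a countable Pytkeev network for $L$, and hence $L$ is a $\Pyt_0$-space. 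Transporting back through the homeomorphism, $X$ is a $\Pyt_0$-space, as required.

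The only substantive ingredients are Proposition~\ref{p:reclop} (to replace the rectifiable space by a topological lop) and Theorem~\ref{t:Saaklop} (to upgrade separability plus the strong Pytkeev property to a countable Pytkeev network in the lop setting); the rest is bookkeeping regarding separation axioms and topological invariance. There is no real obstacle here, since the key work was already done in Theorem~\ref{t:Saaklop}; the only point that deserves a moment of care is the passage from $T_0$ to regularity, which relies on the cited rectifiability result of Gulko.
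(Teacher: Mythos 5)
Your proof is correct and follows exactly the route the paper intends: the paper derives this corollary in one line by combining Proposition~\ref{p:reclop} with Theorem~\ref{t:Saaklop}, and your argument simply spells out that combination, including the (correct) observation that the $T_0$-to-regular upgrade comes from Gul'ko's result on rectifiable spaces. Nothing is missing.
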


Next, we detect $\Pyt_0$-spaces among sequential rectifiable spaces.
%We shall say that a topological space $X$
%\begin{itemize}
%\item is {\em countably cellular} if each disjoint family of open sets in $X$ is at most countable;
%\item is {\em weakly Lindel\"of} if open cover $\U$ of $X$ contains a countable subfamily $\V\subset\U$ with dense union $\bigcup\V$ in $X$;
%\item has {\em countable extent} if each closed discrete subspace of $X$ is at most countable.
%\end{itemize}

%For any topological space $X$ we have the following implications:
%$$\xymatrix{
%\mbox{separable}\ar@{=>}[rr]&&\mbox{countably cellular}\ar@{=>}[d]\\
%\mbox{cosmic}\ar@{=>}[r]\ar@{=>}[u]&\mbox{Lindel\"of}\ar@{=>}[r]\ar@{=>}[d]&\mbox{weakly Lindel\"of}\\
%&\mbox{countable extent}.
%}$$

\begin{theorem} For a sequential rectifiable $T_0$-space $X$ the following conditions are equivalent:
\begin{enumerate}
\item $X$ is a  $\Pyt_0$-space;
\item $X$ is a  $\aleph_0$-space;
\item $X$ is  Lindel\"of and has countable $\cs^*$-character;
\item $X$ is  separable and has the strong Pytkeev property;
\item $X$ is either metrizable separable or a submetrizable  $k_\w$-space.
\end{enumerate}
\end{theorem}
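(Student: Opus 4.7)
The plan is to close the cycle $(1) \Rightarrow (2) \Rightarrow (3) \Rightarrow (4) \Rightarrow (1)$ and then handle $(1) \Leftrightarrow (5)$ separately. The implication $(1) \Rightarrow (2)$ is Proposition~\ref{p1.2}, and $(2) \Rightarrow (3)$ is routine: an $\aleph_0$-space is cosmic hence Lindel\"of, and any countable $k$-network is a countable $\cs^*$-network, witnessing countable $\cs^*$-character at every point. For $(3) \Rightarrow (4)$, sequentiality of $X$ promotes each countable $\cs^*$-network at a point to a countable Pytkeev network via Proposition~\ref{p1.3s}, so $X$ has the strong Pytkeev property. Separability is then obtained from Lindel\"ofness together with the homogeneity inherent in the rectifiable structure: fix a countable (strict) Pytkeev network $\mathcal N$ at the unit $e$, transport it to each point by the rectifying homeomorphisms, cover $X$ by countably many open sets of the form $xN$ using Lindel\"ofness, and pick a point in each to obtain a countable dense subset. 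Finally $(4) \Rightarrow (1)$ is precisely Corollary~\ref{c:recsak}.

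For the equivalence $(1) \Leftrightarrow (5)$, the direction $(5) \Rightarrow (1)$ is immediate: a separable metrizable space has a countable base and hence a countable Pytkeev network, while a submetrizable $k_\omega$-space is a $\Pyt_0$-space by Corollary~\ref{koPi}. The heart of the theorem is the converse $(1) \Rightarrow (5)$, which I plan to attack via a dichotomy on first countability at the unit $e$. If $X$ is first countable, then by Gul'ko's theorem \cite{Gul} (as invoked in the proof of Theorem~\ref{t:lntlop}) the rectifiable $T_0$-space $X$ is metrizable, and being cosmic it is also separable. Otherwise $X$ fails to be first countable at $e$, and by the homogeneity provided by Proposition~\ref{p:reclop} it is nowhere first countable; the plan is then to show that $X$ is a $k_\omega$-space, after which submetrizability follows from the standard fact that every cosmic regular space admits a continuous metric.

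The main obstacle is constructing the $k_\omega$-decomposition from the failure of first countability. The expected strategy is to fix a countable strict Pytkeev network $\mathcal N=\{N_k\}_{k\in\omega}$ at $e$, with members closed and closed under finite unions, and for each $k$ measure the extent to which $N_0\cup\cdots\cup N_k$ fails to be a neighborhood of $e$; the failure of first countability forces an accumulating sequence outside every such union, while the Pytkeev property forces such sequences to cluster inside a single $N_m$. A diagonal extraction then shows that every convergent sequence in $X$ eventually lies in some compactum assembled from finitely many members of $\mathcal N$, and enumerating these compacta produces an increasing sequence $(K_n)_{n\in\omega}$ whose union is $X$ and which, by sequentiality, determines the topology. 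This extends to rectifiable spaces (via translation through the rectifying homeomorphism) the classical dichotomy of Pestov and Arhangel'ski\u\i{} that a sequential $\aleph_0$ topological group is either first countable or a $k_\omega$-space; the step where I expect the most delicate bookkeeping is reconciling the non-group translations of $\mathcal N$ with the compact cover, which in the group case is handled automatically by invariance of the neighborhood filter.
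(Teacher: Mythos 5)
There is a genuine gap, and it sits exactly where the paper itself locates the difficulty. The paper's own proof closes the easy implications through Corollary~\ref{c1.7}, Corollary~\ref{c:recsak} and Corollary~\ref{koPi}, and then explicitly outsources the implication $(3)\Ra(5)$ to the external paper \cite{BR}, calling it ``the most difficult implication.'' Your proposal tries to absorb that content into two steps, and both have concrete problems. First, in $(3)\Ra(4)$ your derivation of separability fails: covering $X$ by countably many sets of the form $xN$ (which, incidentally, need not be open, since members of a Pytkeev network at $e$ need not be neighborhoods of $e$) and picking one point from each member of a countable cover does not produce a dense set --- cover $\IR$ by the intervals $(n-1,n+1)$ and pick the integer $n$ from each to see the failure. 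Lindel\"ofness plus a local countable network at each point does not formally yield separability; getting a countable \emph{global} network out of the local ones is circular, because you would need a countable dense set of base points for the translates, which is the separability you are trying to prove.

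Second, the implication $(1)\Ra(5)$ is only gestured at. The dichotomy ``first countable at $e$ $\Ra$ metrizable (via \cite{Gul}), otherwise $k_\w$'' is indeed the right shape of the argument, but the $k_\w$ branch is not carried out: you assert that compacta can be ``assembled from finitely many members of $\mathcal N$,'' yet members of a Pytkeev network need not be compact nor have compact closure, so the compactness of these assembled sets is unjustified; and you yourself flag that reconciling the non-group translations with the compact cover is unresolved. That reconciliation is precisely the content of \cite{BR} (extending the Banakh--Zdomskyy group-case dichotomy of \cite{BZd} to rectifiable spaces), so what you have written is a plan for reproving that paper rather than a proof. To match the paper you should either cite \cite{BR} for $(3)\Ra(5)$ as the author does, or supply the full construction of the increasing compact cover together with a correct separability argument.
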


\begin{proof} The equivalence $(1)\Leftrightarrow (2)$ follows from Corollary~\ref{c1.7}, the implication $(1)\Ra(4)$ is trivial and $(4)\Ra(1)$ follows from Corollary~\ref{c:recsak}. This shows the equivalence of the conditions (1), (2) and (4).

The implication $(5)\Ra(1)$ follows from Corollary~\ref{koPi}, $(1)\Ra(3)$ is trivial and the most difficult implication $(3)\Ra(5)$ is proved in \cite{BR}.
\end{proof}

 \section{Acknowledgement}
The author would like to thank Arkady Leiderman and Saak Gabriyelyan for fruitful e-mail discussions which led to the current general form of Theorem~\ref{main} (initially this theorem was proved for metrizable separable spaces $X,Y$ thus answering a question from \cite{GKL}, then after Leiderman's suggestion it was generalized to the case of an $\aleph_0$-space $X$ and a metrizable separable space $Y$, and finally after a discussion with Saak Gabriyelyan it was generalized to its current final form). Besides that, the author expresses his thanks to Arkady Leiderman for the permission to include his Theorem~\ref{t:leider} in the text, and to Saak Gabriyelyan for the suggestion to use the German-style letter $\Pyt$ for denoting $\Pyt_0$-spaces (which were initially called ``Pytkeev $\aleph_0$-spaces'').

Also the author thanks Masami Sakai for his valuable comments turning Problem 2.12 (in the printed version of this paper) into Remark 2.12 (mentioning the results of M.Sakai on the countable fan-open tightness in function spaces).
\newpage

\end{document}